\documentclass[11pt]{article}
\usepackage{amsmath,amsfonts,amssymb, amsthm,enumerate,graphicx}
\usepackage{wrapfig}
\usepackage{lipsum}
\usepackage{tikz,authblk}
\usepackage{float}
\usepackage{hyperref}
\usepackage{subfig}
\usepackage{url}
\usetikzlibrary{shapes.geometric}
\newtheorem{theorem} {{\textsf{Theorem}}}[section]
\newtheorem{proposition}[theorem]{{\textsf{Proposition}}}
\newtheorem{corollary}[theorem]{{\textsf{Corollary}}}
\newtheorem{conj}[theorem]{{\textsf{Conjecture}}}
\newtheorem{definition}[theorem]{{\textsf{Definition}}}
\newtheorem{remark}[theorem]{{\textsf{Remark}}}

\newtheorem{lemma}[theorem]{{\textsf{Lemma}}}
\newtheorem{question}[theorem]{{\textsf{Question}}}

\begin{document}
\title{A Complete Classification of Discrete $d$-Pseudomanifolds with at Most $2d+7$ Vertices}
\author{Biplab Basak$^{1,2}$, Debolina Ghosh$^2$, and Raju Kumar Gupta$^3$}
\date{}
\maketitle
\vspace{-10mm}
\begin{center}

\noindent {\small Department of Mathematics, IIT Delhi, Hauz Khas, New Delhi 110016$^2$.}

\noindent {\small Mathematical Institute, Slovak Academy of Sciences, Bratislava, 81438$^{3}$.}

\noindent {\small {\em E-mail addresses:} \url{biplab@iitd.ac.in} (B. Basak), 
\url{maz258290@maths.iitd.ac.in} (D. Ghosh) \url{rajukrg3217@gmail.com} (R. K. Gupta).}

\medskip

\date{\today}
\end{center}
\footnotetext[1]{Corresponding author}
\hrule

\begin{abstract}
A simple undirected graph \(M\) is called a discrete \(d\)-pseudomanifold if, for every vertex \(v\), the induced subgraph \(N_M(v)\) on the neighbors of \(v\) is a discrete \((d-1)\)-pseudomanifold, where a discrete \(1\)-pseudomanifold is defined to be an \(n\)-cycle with \(n\geq 4\). 
These objects arise naturally as graph-theoretic analogues of simplicial pseudomanifolds and provide a purely combinatorial framework for studying manifold-like structures through local neighborhood conditions. Understanding discrete pseudomanifolds with a small number of vertices is therefore a fundamental problem in combinatorial topology and extremal graph theory.

In this article, we first prove that every discrete \(d\)-pseudomanifold has at least \(2(d+1)\) vertices. We then provide a complete classification of discrete \(d\)-pseudomanifolds with at most \(2d+6\) vertices by determining all possible combinatorial types of such pseudomanifolds. Further, we establish an equivalence between discrete \(d\)-pseudomanifolds and edge graphs of flag normal \(d\)-pseudomanifolds. As a consequence, we derive a purely combinatorial characterization of flag normal \(d\)-pseudomanifolds with at most \(2d+6\) vertices and prove that each such complex is a simplicial $d$-sphere. 

Finally, we show that this sphere characterization is optimal within the class of flag normal \(d\)-pseudomanifolds by constructing examples on \(2d+7\) vertices that are not spheres. Specifically, we prove that, for $d\geq 3$, every flag normal $d$-pseudomanifold with at most $2d+7$ vertices is either a simplicial $d$-sphere or a flag triangulation of the $(d-2)$-fold suspension of $\mathbb{RP}^{2}$.

\end{abstract}

\noindent {\small {\em MSC 2020\,:}  Primary 05E45; Secondary 05C69, 52B05, 57Q15.
	
\noindent {\em Keywords:} Simple Undirected Graph;  Discrete Pseudomanifolds; Clique Complex; Flag triangulations; Normal Pseudomanifold}

\section{Introduction}

Graphs are commonly used to model networks, but they can also be viewed as geometric objects. In topology, spaces such as manifolds, and pseudomanifolds are usually described using continuous or simplicial structures. However, in many applications and computational settings, it is desirable to work with discrete objects. This has led to the study of graphs whose local neighborhoods resemble lower-dimensional geometric spaces. Existing graph-theoretic models mainly focus on manifold-like structures, where every point has a nice local neighborhood. However, many important topological spaces contain singularities or other non-manifold features that cannot be adequately captured by these models.

One graph-theoretic discrete model of topological spaces arises in digital topology. In digital topology, the graph-theoretic approach represents a digital image by a graph whose structure is determined by the local adjacency relations of its points. In this setting, digital $n$-dimensional spaces with locally spherical neighborhoods provide graph-theoretic models of continuous $n$-dimensional spaces. As a result, digital $n$-surfaces can be defined and studied as simple undirected graphs (see, for example, Evako \cite{Evako 2} and Daragon et.al. \cite{Daragon}). Several work has been carried out using this graph-theoretic framework to study digital manifolds and  digital spaces with locally non-spherical topologies (see Evako \cite{Evako 1, Evako 3, Evako 4, Evako 5}). 

 In recent years, discrete manifolds have gained significant interest as an independent area of study through the work of Oliver Knill \cite{Knill 1, Knill 2, Knill 3}. Knill defined a discrete \(d\)-manifold \(M\) purely as a finite simple graph where the unit sphere of every vertex is the induced subgraph generated by its neighbors is a discrete \((d-1)\)-sphere. Using this self-contained framework, Knill explored several classic topological themes in a purely graph-theoretic setting, including graph coloring, a discrete analogue of the Morse-Sard theorem, and a variation of the Gauss-Bonnet theorem. Given the rich structural properties of these graphs, expanding these concepts beyond manifold constraints has become a natural next step. For instance, a recent generalization by Basak et al. \cite{BasakDeom} extended the study of graph coloring from discrete manifolds to discrete pseudomanifolds. 

Motivated by these developments, this paper formally introduces the concept of a discrete $d$-pseudomanifold.  A simple undirected graph $M$ is called a discrete $d$-pseudomanifold if, for every vertex $v$, the induced subgraph $N_M(v)$ on the neighbors of $v$ is a discrete $(d - 1)$-pseudomanifold, where a discrete $1$-pseudomanifold is defined to be an $n$-cycle with $n \geq 4$. The structural beauty of this definition lies in its purely local and recursive nature. These objects arise naturally as a graph-theoretic generalization of pseudomanifolds, avoiding the need for complex geometric tracking. They provide a highly accessible, purely combinatorial framework to study manifold-like structures with singularities simply by looking at the local behavior of the vertices. This definition is useful from both theoretical and computational perspectives. Since it depends only on the neighborhoods of vertices, it can be verified using local information, making it suitable for algorithmic study.

In this article, we investigate the structure of discrete $d$-pseudomanifolds and their 
connection with flag normal $d$-pseudomanifolds. We first prove that every discrete 
$d$-pseudomanifold must contain at least $2(d+1)$ vertices, establishing the sharp 
lower bound on the number of vertices. We then provide a complete classification of 
discrete $d$-pseudomanifolds having at most $2d+6$ vertices. This classification gives a 
detailed description of all possible combinatorial types in this range.

Flag simplicial complexes have emerged as an important class of simplicial complexes due to their strong connections with topology, extremal combinatorics, and graph theory. In particular, the study of flag triangulations of manifolds and spheres has led to several fundamental results, including upper bound theorems and structural properties of flag pseudomanifolds, flag homology spheres, and flag spheres; see, for example, \cite{Athanasiadis, LabbeNevo2017, Zheng2017, Zheng2020a}. Moreover, flag spheres have been extensively studied in connection with the Charney–Davis conjecture and  its generalization given by Gal’s conjecture; see \cite{Charney-Davis, Gal}.

In this article, we next relate these two settings. We establish an equivalence between discrete $d$-pseudomanifolds and the 
edge graphs of flag normal $d$-pseudomanifolds. Using this correspondence, we translate 
our classification results into the setting of flag complexes and obtain a purely 
combinatorial characterization of flag normal $d$-pseudomanifolds with at most 
$2d+6$ vertices. In particular, we prove that every flag normal $d$-pseudomanifold in this range is a simplicial $d$-sphere.

Flag spheres have recently attracted significant attention due to their rich 
combinatorial and topological properties. In particular, recent works have studied 
structural aspects of flag spheres, including stable sets and the existence of induced 
equators; see Chudnovsky and Nevo \cite{Nevo23,Nevo20}. Our results contribute to this 
direction by providing a complete classification in the small vertex regime and by 
identifying a precise threshold for when flag normal $d$-pseudomanifolds are forced to 
be spheres.

Finally, we show that the obtained sphere characterization is optimal within the class 
of flag normal $d$-pseudomanifolds. Specifically, we construct examples with 
$2d+7$ vertices that are flag normal $d$-pseudomanifolds but are not simplicial 
$d$-spheres, demonstrating that the bound $2d+6$ cannot be improved. More precisely, we prove that, for every $d\geq 3$, any flag normal $d$-pseudomanifold with at most $2d+7$ vertices is either a simplicial $d$-sphere or a flag triangulation of the $(d-2)$-fold suspension of $\mathbb{RP}^{2}$.
\section{Preliminaries}

Let $M=(V(M),E(M))$ be a simple undirected graph, where $V(M)$ and
$E(M)$ denote the vertex set and edge set of $M$, respectively. For
vertices $u,v\in V(M)$, an edge of $M$ is denoted by $uv$, where
$uv\in E(M)$. For a vertex $v\in V(M)$, the subgraph induced by the set
of neighbors of $v$ is denoted by $N_M(v)$, i.e.,
$V(N_M(v))=\{u\in V(M): uv\in E(M)\},$
and $E(N_M(v))=\{uw: uv, uw, wv\in E(M)\}$. When the ambient graph $M$ is
clear from the context, we simply write $N(v)$.

\begin{definition}
A graph \( M \) is called a discrete \( d \)-pseudomanifold if, for every vertex \( v \), the graph \( N_M(v) \) (or, \( N(v) \) if $M$ is understood) induced on the neighboring vertices of \( v \) is a discrete \( (d-1) \)-pseudomanifold, where a discrete \( 1 \)-pseudomanifold is defined to be an \( n \)-cycle with \( n \geq 4 \).
\end{definition}

By \(M_n^d\) we denote a discrete \(d\)-pseudomanifold with \(n\) vertices. For $n\ge 3$, the cycle graph $C_n$ is the graph with vertex set
$
V(C_n)=\{v_1,v_2,\dots,v_n\}
$
and edge set
$
E(C_n)=\{v_i v_{i+1} : 1\le i\le n-1\}\cup\{v_n v_1\}.
$  Two graphs $M$ and $N$ are said to be \emph{isomorphic}, if there exists a bijection
\[
\phi:V(M)\longrightarrow V(N)
\]
such that
\[
xy\in E(M) \iff \phi(x)\phi(y)\in E(N)
\]
for all $x,y\in V(M)$. The map $\phi$ is called a graph isomorphism.

Let $M$ and $N$ be graphs with disjoint vertex sets. The \emph{join} of
$M$ and $N$, denoted by $M \star N$, is the graph with vertex set
$V(M \star N)=V(M)\cup V(N)$ and edge set
$E(M \star N)=E(M)\cup E(N)\cup\{uv : u\in V(M), v\in V(N)\}$. Thus,
$M \star N$ is obtained from the disjoint union of $M$ and $N$ by adding
all possible edges between every vertex of $M$ and every vertex of $N$. For $d \geq 0$, let $a_0,a_1,\ldots,a_d,b_0,b_1,\ldots,b_d$ be distinct vertices. The graph $\{a_0,b_0\} \star \{a_1,b_1\} \star \cdots \star \{a_d,b_d\}$ is denoted by $S^d_{2d+2}$ and is called the \emph{octahedral $d$-sphere}. By definition, $S^d_{2d+2}$ is a $d$-dimensional discrete pseudomanifold on $2(d+1)$ vertices. 

For a discrete $d$-pseudomanifold $M$ and a vertex $v \in V(M)$, the graph $N_M(v)$ is a discrete $(d-1)$-pseudomanifold.
For an edge $uv \in E(M)$, let $N_M(uv)$ denote the subgraph of $M$ induced by the common neighbors of $u$ and $v$. Thus, $V(N_M(uv))=\{\, w \in V(M)\setminus\{u,v\} : uw, vw\in E(M)\}$ and $E(N_M(uv))=\{\, xy : x,y \in V(N_M(uv)), \, xy \in E(M)\}$. Similarly, if $u_1u_2$, $u_2u_3$, and $u_1u_3$ are edges of $M$, then $N_M(u_1u_2u_3)$ denotes the subgraph of $M$ induced by the common neighbors of $u_1$, $u_2$, and $u_3$.
If $S \subseteq V(M)$, then $M[S]$ denotes the induced subgraph of $M$ on $S$. Thus,
$V(M[S])=S$ and
$E(M[S])=\{\, uv \in E(M) : u,v \in S\,\}$.
For every vertex $v \in V(M)$, the degree of $v$, denoted by $\deg(v)$, is the number of vertices adjacent to $v$. Finally, for any finite set $X$, the notation $|X|$ denotes the cardinality of $X$.
For a graph $M$, we denote by $f_0(M)$ the number of vertices of $M$, i.e.,
$
f_0(M)=|V(M)|,
$
and by $f_1(M)$ the number of edges of $M$, i.e.,
$
f_1(M)=|E(M)|.
$

Let \(M\) be a discrete \(d\)-pseudomanifold. Choose a vertex \(v\in V(M)\). 
Then \(N_M(v)\) is an induced discrete \((d-1)\)-pseudomanifold. Choose a discrete 
\((d-2)\)-pseudomanifold \(W\subset N_M(v)\) such that the induced subcomplex
\[
N_M(v)[X],\qquad X=V(N_M(v))\setminus V(W),
\]
has two disjoint components \(W_1\) and \(W_2\). Define \(Exp(M)\) as the graph with
\[
V(Exp(M))=V(M)\cup\{u,w\}\setminus\{v\},
\]
and edge set
\begin{align*}
E(Exp(M))={}&\big(E(M)\setminus\{vx:x\in V(N_M(v))\}\big)\cup\{ux:x\in V(W_1)\}\\
&\cup\{wx:x\in V(W_2)\}\cup (uw \star W).
\end{align*}
Then \(Exp(M)\) is also a discrete \(d\)-pseudomanifold. We say that \(Exp(M)\) is 
obtained from \(M\) by an edge expansion with respect to the vertex \(v\) and the 
discrete \((d-2)\)-pseudomanifold \(W\). Note that \(Exp(M)\) is uniquely 
determined by the choice of the vertex \(v\) and the discrete \((d-2)\)-pseudomanifold  \(W\). Moreover, \(N_{Exp(M)}(uw)=W\), and the degree of each vertex of \(W\) in 
\(M\) is one less than its degree in \(Exp(M)\). If $W=N_M(vx)$ for some edge $vx\in E(M)$, then $Exp(M)$, obtained from $M$ by an edge expansion with respect to the vertex $v$ and the discrete $(d-2)$-pseudomanifold $W$, is precisely the discrete $d$-pseudomanifold obtained from $M$ by subdividing the edge $vx$.

The reverse operation of an edge expansion is called an \emph{edge contraction}. More
precisely, a graph \(N\) is obtained from a graph \(M\) by an edge contraction of an edge
\(uw\in E(M)\) if \(N\) is obtained by deleting the edge \(uw\) and identifying the
vertices \(u\) and \(w\) into a single vertex \(v\). Although \(M\) is a discrete
\(d\)-pseudomanifold, the graph obtained by an arbitrary edge contraction need not be a
discrete \(d\)-pseudomanifold. However, if \(Exp(M)\) is obtained from \(M\) by an edge
expansion corresponding to the choice of \(v\) and \(W\) as above, then \(M\) is obtained
from \(Exp(M)\) by contracting the edge \(uw\).

\section{Discrete $d$-pseudomanifolds with a few vertices}

\subsection{Discrete $d$-pseudomanifolds on at most $2d+3$ vertices}

\begin{lemma}\label{lemma-1}
For $d\geq 1$, let $M$ be a discrete $d$-pseudomanifold. Then there exist vertices $u,v \in V(M)$ such that the edge $uv \notin E(M)$; that is, $u$ and $v$ are non-adjacent.
\end{lemma}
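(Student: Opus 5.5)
We argue by induction on $d$, showing that $M$ cannot be a complete graph. Suppose, for contradiction, that every pair of distinct vertices of $M$ is adjacent, so that $M$ is the complete graph $K_n$ on $n=f_0(M)$ vertices.

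For the base case $d=1$, $M$ is an $n$-cycle with $n\geq 4$ by definition. Pick a vertex $v_1$. Its only neighbours are $v_2$ and $v_n$, and $n\geq 4$ gives $v_3\neq v_n$. Hence $v_1v_3\notin E(M)$, and the claim holds.

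For the inductive step, let $d\geq 2$ and assume the statement for discrete $(d-1)$-pseudomanifolds. If $M=K_n$, then for any vertex $v$ the induced subgraph $N_M(v)$ has vertex set $V(M)\setminus\{v\}$. Since $M$ is complete, every pair of these vertices is adjacent, so $N_M(v)$ is the complete graph $K_{n-1}$. By the definition of a discrete $d$-pseudomanifold, $N_M(v)$ is a discrete $(d-1)$-pseudomanifold, so by the induction hypothesis it contains two non-adjacent vertices. This contradicts the completeness of $N_M(v)$, so $M$ is not complete. The argument also needs $M$ to have at least one vertex; this is implicit in the definition, since the recursion bottoms out at nonempty cycles.

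There is essentially no obstacle. The only point to check is that the induced subgraph of a complete graph on a subset of its vertices is again complete, which is immediate from the definition of $N_M(v)$. A slightly stronger form is also convenient to record: for every vertex $v$ there is a vertex other than $v$ not adjacent to it. This is not needed here, since the lemma only asks for one non-adjacent pair.
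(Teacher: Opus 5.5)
Your proof is correct and is the same induction the paper uses: the induction hypothesis gives a non-adjacent pair in the induced subgraph $N_M(v)$, and that pair is non-adjacent in $M$; you only phrase this as a contradiction with completeness. One small quibble: nonemptiness of $M$ does not follow from the recursion, since for $d\geq 2$ the empty graph satisfies the definition vacuously, but the paper makes the same unstated assumption when it picks a vertex $x$.
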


\begin{proof}

We prove the statement by induction on the dimension $d$.

The result is clear for $d=1$. Indeed, a discrete $1$-pseudomanifold is a cycle $C_n$ with $n\geq 4$, and any such cycle contains two non-adjacent vertices.

Now let $d\geq 2$ and assume that every discrete $(d-1)$-pseudomanifold contains two non-adjacent vertices. Let $M$ be a discrete  $d$-pseudomanifold. Then, for any vertex $x\in V(M)$, the graph $N_M(x)$ is a discrete $(d-1)$-pseudomanifold. By the induction hypothesis, there exist vertices $u,v\in V(N_M(x))$ such that $uv\notin E(N_M(x))$. Since $N_M(x)$ is an induced subgraph of $M$, we also have $uv\notin E(M)$. Thus $u$ and $v$ are non-adjacent vertices of $M$. Hence the result follows by induction.
\end{proof}

\begin{corollary}\label{lem-2}
A discrete $d$-pseudomanifold cannot be a cone. Equivalently, if $M$ is a discrete $d$-pseudomanifold, then there does not exist a graph $G$ and a vertex $v \notin V(G)$ such that $M=G\star\{v\}$.
\end{corollary}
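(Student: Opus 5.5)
We argue by contradiction, using Lemma~\ref{lemma-1} applied to the link of the cone vertex. Suppose $M$ is a discrete $d$-pseudomanifold and $M=G\star\{v\}$ for some graph $G$ and a vertex $v\notin V(G)$. By the definition of the join, $v$ is adjacent to every vertex of $G$. Hence $V(N_M(v))=V(G)=V(M)\setminus\{v\}$, and since $N_M(v)$ is an induced subgraph, $N_M(v)=G$.

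Next we split into two cases according to the dimension.

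\emph{Case $d\geq 2$.} Here $N_M(v)=G$ is a discrete $(d-1)$-pseudomanifold, with $d-1\geq 1$. By Lemma~\ref{lemma-1} it contains two non-adjacent vertices $x,y$. This alone is not a contradiction, since $v$ is the cone point and $x,y\neq v$. The contradiction instead comes from the neighbourhood of $x$. Because $x$ is adjacent to $v$, the vertex $v$ lies in $N_M(x)$. Since $v$ is adjacent to every other vertex of $M$, it is adjacent to every vertex of $N_M(x)$. So $N_M(x)$ is itself a cone with apex $v$, namely $N_M(x)=N_G(x)\star\{v\}$, and it is a discrete $(d-1)$-pseudomanifold. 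This suggests running an induction on $d$: a discrete $(d-1)$-pseudomanifold cannot be a cone, so neither can $M$.

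\emph{Base case $d=1$.} Here $M=C_n$ with $n\geq 4$, and every vertex of $C_n$ has degree $2<n-1$. So no vertex of $M$ is adjacent to all the others, and $M$ is not a cone.

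\emph{Simpler direct route.} The induction can in fact be avoided. Apply Lemma~\ref{lemma-1} to $N_M(v)$ itself but argue about apexes, iterating the link-taking down to dimension $1$. Take a chain $v=x_0,x_1,\dots,x_{d-1}$ with each $x_i$ lying in the iterated link of $x_0,\dots,x_{i-1}$. The vertex $v$ remains adjacent to all vertices of every iterated link that contains it. Choosing $x_1,\dots,x_{d-1}\neq v$ in $N_M(v)$, we end with a $1$-dimensional link $N(x_0x_1\cdots x_{d-1})$. This is a cycle $C_n$ with $n\geq4$ lying in $N_M(v)$, so it does not contain $v$, and the chain does not directly produce a cone cycle.

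The cleaner choice is therefore to take links of vertices other than $v$, keeping $v$ inside. Let $L_0=M$ and choose $x_1\in V(G)$; the graph $L_1=N_M(x_1)$ contains $v$ as an apex. Then choose $x_2\in V(L_1)\setminus\{v\}$, which is possible because $L_1$ is not a single vertex, and let $L_2=N_{L_1}(x_2)$, which again contains $v$ as an apex. Continuing, we reach a $1$-dimensional $L_{d-1}$, which is a cycle $C_n$ with $n\geq4$ and has $v$ adjacent to all of its other vertices. This contradicts the base case above.

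The only point needing care is that links of links of $M$ are indeed lower-dimensional discrete pseudomanifolds and are induced subgraphs. Both facts follow directly from the recursive definition.
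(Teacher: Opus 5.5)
Your argument is correct, but it does not follow the paper's route. The paper gives no separate proof and records the statement as an immediate consequence of Lemma~\ref{lemma-1}. You noticed the problem with that in your case $d\geq 2$. Lemma~\ref{lemma-1} as stated only guarantees that \emph{some} non-adjacent pair exists, and this does not rule out a cone, since $G\star\{v\}$ can have non-adjacent pairs inside $G$ (e.g.\ $C_4\star\{v\}$). What is actually needed is the pointwise strengthening that \emph{every} vertex has a non-neighbor, and your induction supplies exactly that. For $x\in V(G)$, the link $N_M(x)=N_G(x)\star\{v\}$ is again a cone and a discrete $(d-1)$-pseudomanifold. The base case holds because every vertex of $C_n$, $n\geq 4$, has degree $2<n-1$. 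The induction in the proof of Lemma~\ref{lemma-1} would yield this strengthened form if one always passed to the link of a neighbor of the given vertex; the paper's one-line deduction tacitly relies on that stronger statement. So your version is self-contained and closes a step the paper leaves implicit, while the paper's is shorter but leans on more than Lemma~\ref{lemma-1} literally says. In a write-up, drop the unused appeal to Lemma~\ref{lemma-1} and the abandoned chain of links inside $N_M(v)$. The two-line induction, or equivalently your iterated links $L_1,\dots,L_{d-1}$ that keep $v$ as an apex, is all that is needed.
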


\begin{theorem} \label{vertex edge lowerbound}
For $d\geq 1$, let $M$ be a discrete $d$-pseudomanifold. Then
\begin{enumerate}
    \item[(a)] $f_0(M) \geq 2(d+1)$,
    \item[(b)] $f_1(M) \geq 4\binom{d+1}{2}$.
\end{enumerate}
\end{theorem}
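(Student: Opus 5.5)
Both parts go by induction on $d$, using that every vertex link $N_M(v)$ is a discrete $(d-1)$-pseudomanifold. Lemma~\ref{lemma-1} and Corollary~\ref{lem-2} supply the extra vertex that the count needs. For $d=1$, $M$ is a cycle $C_n$ with $n\ge 4$, so $f_0(M)=f_1(M)=n\ge 4=2(1+1)=4\binom{2}{2}$. This settles the base case for both (a) and (b).

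For (a), let $d\ge 2$ and pick any vertex $v$. By induction, $N_M(v)$ has at least $2d$ vertices, so $\deg(v)\ge 2d$. Together with $v$ this accounts for $2d+1$ vertices. For one more, note that $M$ is not a cone with apex $v$ (Corollary~\ref{lem-2}), so some vertex $w\ne v$ is not adjacent to $v$. This $w$ lies outside $\{v\}\cup V(N_M(v))$, giving $f_0(M)\ge 2d+2$.

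For (b), I would use the degree bound just obtained: $\deg(v)\ge 2d$ for every vertex $v$, since $N_M(v)$ is a $(d-1)$-pseudomanifold and (a) applies to it. Then
\[
f_1(M)=\tfrac12\sum_{v}\deg(v)\ge \tfrac12\cdot 2d\cdot f_0(M)\ge d(2d+2)=2d(d+1)=4\binom{d+1}{2},
\]
using (a) for $f_0(M)$. As an alternative one could double count triangles: each edge $uv$ lies in $|V(N_M(uv))|$ triangles, and $N_M(uv)=N_{N_M(u)}(v)$ is a $(d-2)$-pseudomanifold for $d\ge 3$. This route is unnecessary, since the degree-sum argument already gives exactly the claimed bound.

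The only subtle point is the extra vertex in (a), which is exactly what Corollary~\ref{lem-2} provides. I should also check that for $d\ge 2$ the link is nonempty, so that some vertex $v$ can be chosen at all. This holds because every graph here has at least one vertex: a cycle $C_n$ with $n\ge4$ is nonempty, and by induction so is every discrete $d$-pseudomanifold. Both bounds are attained by the octahedral sphere $S^d_{2d+2}$, where every vertex has degree $2d$.
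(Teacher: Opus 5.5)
Your proof is correct. Part (a) is essentially the paper's argument. The paper takes a non-adjacent pair $v,w$ from Lemma~\ref{lemma-1} and counts $\{v\}$, $\{w\}$ and $V(N_M(v))$. You fix $v$ first and obtain a non-neighbour $w$ from Corollary~\ref{lem-2}, which gives the same count.

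Part (b) is where you genuinely differ. The paper runs a second induction. For non-adjacent $u,v$, the edges at $u$, the edges at $v$ and the edges of $N_M(u)$ are pairwise disjoint. Hence $f_1(M)\ge \deg(u)+\deg(v)+f_1(N_M(u))\ge 2d+2d+4\binom{d}{2}$, using (a) together with the inductive hypothesis for (b) applied to $N_M(u)$. You instead apply (a) to every vertex link to get minimum degree $2d$, and then sum degrees. This needs no induction for (b). It also proves the stronger inequality $f_1(M)\ge d\,f_0(M)$, from which the stated bound follows by (a). The paper's local count, by contrast, yields a fixed bound that does not grow with $f_0(M)$.

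One small correction to your last paragraph: your claim that nonemptiness follows "by induction" is circular. For $d\ge 2$ the empty graph satisfies the defining condition vacuously, so induction cannot produce a vertex. Nonemptiness is a standing convention, and the paper's proof of Lemma~\ref{lemma-1} also uses it without comment. You should state it as an assumption rather than derive it.
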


\begin{proof}
We prove part~$(a)$ by induction on the dimension $d$. For $d=1$, a discrete $1$-pseudomanifold is an $n$-cycle for some $n\geq 4$. Hence, $f_0(M)\geq 4$, and the result follows in this case.

Now let $d\ge 2$ and assume that every discrete $(d-1)$-pseudomanifold has at least $2d$ vertices. Let $M$ be a discrete $d$-pseudomanifold. By Lemma~\ref{lemma-1}, there exist non-adjacent vertices $v,w\in V(M)$. Since $\mathrm{lk}_M(v)$ is a discrete $(d-1)$-pseudomanifold, the induction hypothesis gives $f_0(\mathrm{lk}_M(v))\ge 2d$. As $v,w\notin V(\mathrm{lk}_M(v))$, the sets $\{v\}$, $\{w\}$ and $V(\mathrm{lk}_M(v))$ are pairwise disjoint. Therefore,
$
f_0(M)\ge 2+f_0(\mathrm{lk}_M(v))
\ge 2+2d
=2(d+1).
$
This proves part~$(a)$.

We prove part~$(b)$ by induction on the dimension $d$. For $d=1$, a discrete $1$-pseudomanifold is an $n$-cycle for some $n\geq 4$. Hence, $f_1(M)\geq 4$, and the result follows in this case.

Now let $d\ge 2$ and assume that every discrete $(d-1)$-pseudomanifold satisfies
$
f_1(M)\ge 4\binom{d}{2}.
$
Let $M$ be a discrete $d$-pseudomanifold. By Lemma~\ref{lemma-1}, there exist non-adjacent vertices $u,v\in V(M)$. Since the collections of edges containing $u$, containing $v$, and lying in $N_M(u)$ are pairwise disjoint, we have
$
f_1(M)\ge f_0(N_M(u))+f_0(N_M(v))+f_1(N_M(u)).
$
Since $N_M(u)$ and $N_M(v)$ are discrete $(d-1)$-pseudomanifolds, part~(a) and the induction hypothesis imply that
$
f_0(N_M(u)),\,f_0(N_M(v))\ge 2d
$
and
$
f_1(N_M(u))\ge 4\binom{d}{2}.
$
Therefore,
$
f_1(M)\ge 2d+2d+4\binom{d}{2}
=4\binom{d+1}{2}.
$
This proves part~$(b)$.
\end{proof}

\begin{lemma}\label{prop-5}
Let $M$ be a discrete $d$-pseudomanifold and let $\emptyset \subsetneq S \subsetneq V(M)$. If the induced subgraph $M[S]$ is a discrete $k$-pseudomanifold, then $k<d$.
\end{lemma}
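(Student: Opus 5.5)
The plan is to argue by induction on $d$, using only the recursive definition.

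\textbf{Base case $d=1$.} Here $M=C_n$ with $n\ge 4$. Every proper induced subgraph of a cycle is a disjoint union of paths, hence contains no cycle. But any discrete $k$-pseudomanifold with $k\ge 1$ contains a cycle, since by recursion some vertex link eventually is a $C_m$. So no proper induced subgraph of $M$ is a discrete $k$-pseudomanifold with $k\ge 1$. This gives $k<1=d$ in the only meaningful range, assuming $k\geq 1$ as the definition does.

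\textbf{Preliminary fact: dimension is well defined.} I first check that a graph cannot be simultaneously a discrete $k$-pseudomanifold and a discrete $k'$-pseudomanifold with $k\neq k'$. For $k=1$, the links in a cycle are two non-adjacent vertices, which is not a discrete pseudomanifold of any dimension $\geq 1$. The general case follows by induction on $\min(k,k')$ applied to a vertex link.

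\textbf{Inductive step.} Let $d\ge 2$, and suppose $M[S]$ is a discrete $k$-pseudomanifold with $k\ge d$. Pick $x\in S$. Then $N_{M[S]}(x)$ is the subgraph of $N_M(x)$ induced on $V(N_M(x))\cap S$. It is a discrete $(k-1)$-pseudomanifold sitting as an induced subgraph of the discrete $(d-1)$-pseudomanifold $N_M(x)$.
\begin{itemize}
\item If $V(N_M(x))\cap S$ were a proper subset of $V(N_M(x))$, the induction hypothesis would give $k-1<d-1$, contradicting $k\ge d$.
\item Hence $V(N_M(x))\subseteq S$. Then $N_{M[S]}(x)=N_M(x)$, and by the preliminary fact $k=d$.
\end{itemize}
Since this holds for every $x\in S$, the set $S$ is closed under taking neighbours in $M$. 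It is therefore a union of connected components of $M$.

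\textbf{Main obstacle: connectedness.} This last step is where the argument can fail. A disjoint union of two octahedral $d$-spheres $S^d_{2d+2}$ is itself a discrete $d$-pseudomanifold, and either component is a proper induced discrete $d$-pseudomanifold. So the statement needs $M$ to be connected, which I take to be the intended (implicit) convention. Under it, $S=V(M)$, contradicting $S\subsetneq V(M)$, and hence $k<d$.

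For the induction itself to be valid, the hypothesis must be applied to $N_M(x)$, so the links must also be connected. This holds for $d-1\ge 2$ by the standard argument that links in a connected pseudomanifold of dimension at least $2$ are connected when connectedness is built into the definition. For cycles it holds trivially. I would therefore phrase the induction for connected discrete pseudomanifolds whose links are connected, i.e.\ include connectedness in the definition at every level.
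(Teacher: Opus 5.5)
Your argument is correct and is essentially the paper's: the paper picks $v_1\in S$ with a neighbour outside $S$, so that $N_{M[S]}(v_1)=N_M(v_1)[S_1]$ is a proper induced discrete $(k-1)$-pseudomanifold, and unrolls this recursion down to a cycle, which is exactly your induction combined with your closure step (a neighbour-closed $S$ in a connected $M$ must be all of $V(M)$). You are also right that connectedness must be assumed at every level; the paper uses it silently, both for $M$ (``Since $M$ is connected'') and for the links $N_M(v_1),\dots$ when it ``repeats the argument''.

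One small simplification: your preliminary fact on uniqueness of the dimension is not needed. The first bullet alone already forces $V(N_M(x))\subseteq S$ for every $x\in S$, and that is all the closure step uses.
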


\begin{proof}
If possible, let $k\geq d$. Since $M$ is connected, there exist vertices $v_1 \in S$ and $v \in V(M)\setminus S$ such that $v_1v \in E(M)$. Since the induced subgraph $M[S]$ is a discrete $k$-pseudomanifold. Then $N_M(v_1)$ and $N_{M[S]}(v_1)$ are discrete $(d-1)$-pseudomanifold and discrete $(k-1)$-pseudomanifold respectively. Note that $v \in V(N_M(v_1))$ but $v \notin V(N_{M[S]}(v_1))$, so $N_{M[S]}(v_1)$ is a proper induced subgraph of $N_M(v_1)$.

Let $S_1 = S \setminus \{v_1\}$. Then 
$V\big(N_{M[S]}(v_1)\big) 
= \{u \in S_1 : uv_1 \in E(M)\} 
= V\big(N_M(v_1)[S_1]\big)$,
and 
$E\big(N_{M[S]}(v_1)\big) 
= \{uv : u,v \in S_1,\ uv_1, vv_1, uv \in E(M)\} 
= E\big(N_M(v_1)[S_1]\big)$.
Thus, $N_{M[S]}(v_1) = N_M(v_1)[S_1]$, where $\emptyset \subsetneq S_1 \subsetneq V(N_M(v_1))$. Hence, $N_M(v_1)[S_1]$ is a discrete $(k-1)$-pseudomanifold.

Now set $M_1 = N_M(v_1)$. Then  $M_1$ and $M_1[S_1]$ are discrete $(d-1)$-pseudomanifold and  discrete $(k-1)$-pseudomanifold respectively. Repeating this argument, after $d-1$ steps we obtain a discrete $1$-pseudomanifold $M_{d-1}$ and a nonempty proper subset $S_{d-1} \subsetneq V(M_{d-1})$ such that $M_{d-1}[S_{d-1}]$ is also a discrete $(k-d+1)$-pseudomanifold.

However, a discrete $1$-pseudomanifold is a cycle of length at least $4$, and no proper induced subgraph of a cycle is a discrete $1$-pseudomanifold of dimension more than 0. This is a contradiction.

Thus, the result follows.
\end{proof}

 \begin{corollary}\label{corollary:suspension}
Let $M$ be a discrete $d$-pseudomanifold and let $u,v\in V(M)$. If $N_M(u)=N_M(v)$. Then $M=N_M(u) \star \{u,v\}$.
\end{corollary}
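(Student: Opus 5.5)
Write $L=N_M(u)=N_M(v)$. The plan is to show that $V(M)=V(L)\cup\{u,v\}$ and that the edges of $M$ are exactly those of $L\star\{u,v\}$, and to get the vertex statement from Lemma~\ref{prop-5}.

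First, $u\neq v$ is implicit in the statement, so $u$ and $v$ are non-adjacent. Indeed, if $uv\in E(M)$ then $v\in V(N_M(u))=V(N_M(v))$, which is impossible because $v$ is not its own neighbour.

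Next, consider the set $S=V(L)\cup\{u,v\}$ and the induced subgraph $M[S]$. Its edges are as follows:
\begin{itemize}
\item the edges of $L$, since $L$ is an induced subgraph of $M$;
\item all edges $ux$ and $vx$ with $x\in V(L)$;
\item no edge $uv$.
\end{itemize}
Hence $M[S]=L\star\{u,v\}$.

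We now check that $M[S]$ is a discrete $d$-pseudomanifold by computing links inside $M[S]$.
\begin{itemize}
\item For $u$ (and likewise $v$), the neighbours of $u$ in $M[S]$ are exactly $V(L)$, so $N_{M[S]}(u)=L$. This is a discrete $(d-1)$-pseudomanifold.
\item For $x\in V(L)$, we have $N_{M[S]}(x)=N_L(x)\star\{u,v\}$. Since $L$ is a discrete $(d-1)$-pseudomanifold, $N_L(x)$ is a discrete $(d-2)$-pseudomanifold.
\end{itemize}
So the point is that the suspension $K\star\{u,v\}$ of a discrete $(d-2)$-pseudomanifold $K$ by two non-adjacent vertices is a discrete $(d-1)$-pseudomanifold. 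This is a small auxiliary claim, proved by induction on dimension.

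\emph{Base case.} If $K$ is a discrete $0$-pseudomanifold, we need the right convention. The natural one, consistent with the base case being $C_n$ with $n\ge 4$, is the following. A $1$-pseudomanifold's vertex links are pairs of non-adjacent vertices, so $K$ is two non-adjacent vertices and $K\star\{u,v\}$ is a $4$-cycle. More generally, the suspension of $C_n$ is fine, and in the lowest case one checks directly that $N(x)$ for $x$ on a cycle $K$ becomes $\{x^-,x^+\}\star\{u,v\}$, which is a $4$-cycle.

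\emph{Inductive step.} In $K\star\{u,v\}$ the link of $u$ is $K$. The link of $y\in V(K)$ is $N_K(y)\star\{u,v\}$, which is handled by induction.

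Thus $M[S]$ is a discrete $d$-pseudomanifold. If $S\subsetneq V(M)$, Lemma~\ref{prop-5} gives $d<d$, a contradiction. Hence $S=V(M)$ and $M=M[S]=N_M(u)\star\{u,v\}$.

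The main obstacle is the suspension claim at the bottom dimension. The paper's definition starts at $d=1$, so the suspension step must be verified concretely for $d=2$: we need $N_L(x)$ to be two non-adjacent vertices when $L$ is a cycle of length at least $4$. This holds because the two neighbours of $x$ on such a cycle are not adjacent, and then the suspension is a $4$-cycle. For $d\ge 3$ the induction goes through as described.
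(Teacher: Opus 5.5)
Your proof is correct and follows the paper's argument: set $S=V(N_M(u))\cup\{u,v\}$, observe that $M[S]=N_M(u)\star\{u,v\}$ is a discrete $d$-pseudomanifold, and conclude $S=V(M)$ from Lemma~\ref{prop-5}. You also supply details the paper leaves implicit: that $uv\notin E(M)$, that $M[S]$ is exactly the join, and the inductive check, with base case at $d=2$, that suspending preserves the discrete pseudomanifold property. One caveat comes from the paper, not from you: Lemma~\ref{prop-5} tacitly assumes connectedness, and two disjoint copies of $S^2_6$ satisfy the literal definition yet violate the statement, so your argument is valid exactly under that implicit assumption.
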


\begin{proof}
Since $M$ is a discrete $d$-pseudomanifold, each of 
$N_M(u)$ and $N_M(v)$ is a discrete $(d-1)$-pseudomanifold. 
Since $N_M(u) = N_M(v)$, it follows that $N_M(u) \star \{u,v\}$
is a discrete $d$-pseudomanifold. 

Let $S = V(N_M(u)) \cup \{u, v\}$. Then both $M$ and $M[S]$ are discrete $d$-pseudomanifolds. 
Therefore, by Lemma~\ref{prop-5}, we have $S = V(M)$, i.e.,  $M=N_M(u)  \star \{u,v\}$
\end{proof}

\begin{theorem} \label{prop-4}
Let $M$ be a discrete $d$-pseudomanifold. Assume that for every vertex $v\in V(M)$, the neighbor graph $N_M(v)$ is the $(d-1)$-dimensional octahedral sphere $S^{d-1}_{2d}$. Then $M$ is the $d$-dimensional octahedral sphere $S^{d}_{2d+2}$.
\end{theorem}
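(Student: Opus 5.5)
The plan is to exhibit a second vertex with the same neighborhood as a chosen vertex and then apply Corollary~\ref{corollary:suspension}. Fix $v\in V(M)$ and write
\[
N_M(v)=S^{d-1}_{2d}=\{a_1,b_1\}\star\cdots\star\{a_d,b_d\}.
\]
If we find a vertex $w\neq v$ with $N_M(w)=N_M(v)$, then Corollary~\ref{corollary:suspension} gives $M=N_M(v)\star\{v,w\}=S^d_{2d+2}$, as required.

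To produce $w$, look at the vertex $a_1$. Its link $N_M(a_1)$ is again an octahedral sphere $S^{d-1}_{2d}$ by hypothesis. It contains $v$ together with all the vertices $a_j,b_j$ for $j\ge 2$, since these are adjacent to both $v$ and $a_1$. It does not contain $b_1$, because $a_1b_1\notin E(M)$ and $N_M(v)$ is an induced subgraph. So $N_M(a_1)$ contains the $2d-1$ vertices $\{v\}\cup\{a_j,b_j: j\ge2\}$, and it has exactly $2d$ vertices. Hence there is exactly one further vertex $w$ in $N_M(a_1)$. This $w$ is not adjacent to $v$, since otherwise $w\in N_M(v)$, which would force $w=b_1$, impossible.

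Next we identify the structure of $N_M(a_1)$. In an octahedral sphere each vertex is adjacent to every other vertex except its unique antipode. Inside $N_M(a_1)$ the vertex $v$ is adjacent to all the $a_j,b_j$ with $j\ge2$ and not to $w$, so $w$ is the antipode of $v$. Therefore $w$ is adjacent to every $a_j,b_j$ with $j\ge 2$, and of course to $a_1$. By the symmetric argument applied to $b_1$ we obtain a vertex $w'$ with the same properties relative to $b_1$.

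The main obstacle is showing $w'=w$, i.e.\ that $w$ is adjacent to $b_1$; after that, $w$ is adjacent to all of $V(N_M(v))$. For this we work in the link of $a_2$ (this needs $d\ge2$; the case $d=1$ is immediate, since a cycle whose vertex neighborhoods are $2$-vertex nonadjacent pairs gives a $4$-cycle $=S^1_4$). The link $N_M(a_2)$ is an octahedral $S^{d-1}_{2d}$ containing $v$, $a_1$, $b_1$, the vertices $a_j,b_j$ for $j\ge3$, and also $w$ and $w'$. That is already $1+2+2(d-2)+|\{w,w'\}|=2d-1+|\{w,w'\}|$ vertices, and the link has only $2d$. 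Hence $|\{w,w'\}|=1$, so $w=w'$.

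It remains to check that $N_M(w)=N_M(v)$. From the above, $N_M(w)\supseteq V(N_M(v))$. Both neighborhoods have exactly $2d$ vertices, so the vertex sets agree. Both are induced subgraphs of $M$ on the same vertex set, so $N_M(w)=N_M(v)$, and Corollary~\ref{corollary:suspension} finishes the proof.
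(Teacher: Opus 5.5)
Your argument is correct for $d\geq 2$ and is essentially the paper's proof. The paper also finds the extra vertex $u$ in the link of one vertex $w$ of an antipodal pair $\{w,w_1\}$ of $N_M(v)$. It then counts vertices in the link of a vertex $x$ from another antipodal pair to force $uw_1\in E(M)$, which you achieve instead by building $w'$ and forcing $w=w'$ inside $N_M(a_2)$. Both proofs finish with Corollary~\ref{corollary:suspension}.

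Your $d=1$ aside is wrong. Every cycle $C_n$ with $n\geq 4$ has all vertex neighborhoods equal to two non-adjacent vertices, so $C_5$ satisfies the hypothesis but is not $S^1_4$. The statement therefore genuinely requires $d\geq 2$, as does the paper's proof, which picks $x\in S^{d-2}_{2d-2}$. You should simply assume $d\geq 2$ rather than claim $d=1$ is immediate.
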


\begin{proof}
Let \(v\in V(M)\). Then its neighbor graph \(N_M(v)\) is a
\((d-1)\)-dimensional octahedral sphere, and hence has \(2d\) vertices.
Let \(N_M(v)=S^{d-2}_{2d-2}\star \{w,w_1\}\). Then \(N_M(w)\) is also a
\((d-1)\)-dimensional octahedral sphere, and therefore has \(2d\)
vertices. Thus, in \(M\), the vertex \(w\) is adjacent to two additional
vertices that do not belong to \(N_M(v)\). One of these vertices is
\(v\), and we denote the other one by \(u\), i.e.,
\(N_M(w)=S^{d-2}_{2d-2}\star \{v,u\}\). By construction, \(u\) is not
adjacent to \(v\).

Take any vertex \(x\in S^{d-2}_{2d-2}\). Then
\(N_{N_M(v)}(x)\) is a \((d-2)\)-dimensional octahedral sphere with
\(2d-2\) vertices, i.e.,
\(N_{N_M(v)}(x)=N_{S^{d-2}_{2d-2}}(x)\star\{w,w_1\}\). Further, \(x\) is
adjacent to \(u\) and \(v\), since
\(N_M(w)=S^{d-2}_{2d-2}\star\{v,u\}\). Therefore,
\(N_M(x)=N_{S^{d-2}_{2d-2}}(x)\star\{w,w_1\}\star\{u,v\}\). This implies
that \(w_1u\in E(M)\). Since \(N_M(u)\) is also a
\((d-1)\)-dimensional octahedral sphere, it has \(2d\) vertices. It
follows that \(u\) is adjacent to every vertex of \(S^{d-2}_{2d-2}\) and
to \(w,w_1\). Therefore, \(u\) is adjacent to every vertex in
\(N_M(v)\). By Lemma~\ref{prop-5}, we obtain
\(N_M(u)=N_M(v)\). Consequently, by Corollary~\ref{corollary:suspension},
\(M\) is the suspension of \(N_M(v)\) with suspension vertices
\(\{u,v\}\). Hence, \(M\) is the \(d\)-dimensional octahedral sphere
\(S^d_{2d+2}\).
\end{proof}

The following result follows from Lemma~\ref{lemma-1}, Theorems~\ref{vertex edge lowerbound} and~\ref{prop-4}.

\begin{corollary} \label{2d+2 vertex}
Let $M$ be a discrete $d$-pseudomanifold with $d \geq 0$. Then
$f_0(M)=2d+2$ if and only if $M = S^{d}_{2d+2}$.
\end{corollary}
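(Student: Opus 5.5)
The "if" direction is immediate, since $S^{d}_{2d+2}$ is a discrete $d$-pseudomanifold on $2d+2$ vertices. For the "only if" direction I will argue by induction on $d$ and reduce to Theorem~\ref{prop-4}: if every vertex link of $M$ is the octahedral sphere $S^{d-1}_{2d}$, then that theorem gives $M=S^d_{2d+2}$. The base case ($d=1$, and $d=0$ read as two non-adjacent vertices) holds because a discrete $1$-pseudomanifold on $4$ vertices is the $4$-cycle $C_4=\{a_0,b_0\}\star\{a_1,b_1\}=S^1_4$.

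For the inductive step, let $d\ge 2$ and let $M$ be a discrete $d$-pseudomanifold with $f_0(M)=2d+2$. The key step is to show that every vertex $v$ has $\deg(v)=2d$. By Theorem~\ref{vertex edge lowerbound}(a) applied to $N_M(v)$, we have $\deg(v)\ge 2d$. For the reverse inequality, I would use the argument from the proof of Theorem~\ref{vertex edge lowerbound}(a) inside the link. By Lemma~\ref{lemma-1}, $N_M(v)$ contains two non-adjacent vertices. More to the point, $M$ is not a cone by Corollary~\ref{lem-2}, so some vertex $w\neq v$ is not adjacent to $v$. Then $\{v\}$, $\{w\}$ and $V(N_M(v))$ are pairwise disjoint, which gives
\[
2d+2 = f_0(M)\ge 2+\deg(v).
\]
Hence $\deg(v)\le 2d$, and so $\deg(v)=2d$ for every $v$.

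It follows that each $N_M(v)$ is a discrete $(d-1)$-pseudomanifold with exactly $2(d-1)+2$ vertices. By the induction hypothesis, $N_M(v)=S^{d-1}_{2d}$ for every $v$, and Theorem~\ref{prop-4} then yields $M=S^d_{2d+2}$.

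I expect no real obstacle here. The only point that needs care is getting the degree squeeze right: the upper bound on $\deg(v)$ comes from the existence of a vertex non-adjacent to $v$, which is exactly what Corollary~\ref{lem-2} supplies. One should also check that the base-case convention for $d=0$ is consistent with how the paper defines things.
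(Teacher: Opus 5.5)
Your proposal is correct and is essentially the paper's argument. The paper only says the corollary follows from Lemma~\ref{lemma-1}, Theorem~\ref{vertex edge lowerbound} and Theorem~\ref{prop-4}, and you fill in the same steps: squeeze $\deg(v)=2d$ at every vertex, use induction to make every link $S^{d-1}_{2d}$, then apply Theorem~\ref{prop-4}. Your use of Corollary~\ref{lem-2} is exactly right, since it gives a non-neighbour of \emph{every} vertex, whereas Lemma~\ref{lemma-1} alone only gives one non-adjacent pair.
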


\begin{theorem} \label{2d+3 vertex}
Let $M$ be a discrete $d$-pseudomanifold with $d \geq 1$. Then
$f_0(M) = 2d+3$ if and only if  $M = C_5 \star S^{d-2}_{2d-2}.
$
\end{theorem}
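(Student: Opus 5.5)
The "if" direction is a direct check. $C_5$ is a discrete $1$-pseudomanifold, and the join of a discrete $a$-pseudomanifold with a discrete $b$-pseudomanifold is a discrete $(a+b+1)$-pseudomanifold. This holds because $N_{A\star B}(x)=N_A(x)\star B$ for $x\in V(A)$, so it follows by induction, with the base case $N_{C_5\star\{a,b\}}(a)=C_5$, and for $x\in C_5$ the neighborhood is the $4$-cycle $\{x_-,x_+\}\star\{a,b\}$. Hence $C_5\star S^{d-2}_{2d-2}$ is a discrete $d$-pseudomanifold on $5+2(d-1)=2d+3$ vertices. Here $S^{-1}$ is interpreted as empty, so for $d=1$ the graph is just $C_5$.

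For the "only if" direction I would argue by induction on $d$. The case $d=1$ is immediate, since a cycle on $5$ vertices is $C_5$. Now let $d\ge2$ and $f_0(M)=2d+3$.

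\textbf{Degree bounds.} For each vertex $v$, Lemma~\ref{lemma-1} applied inside $M$ gives a non-neighbour $w\neq v$, so $\deg(v)\le 2d+1$. Theorem~\ref{vertex edge lowerbound}(a) applied to $N_M(v)$ gives $\deg(v)\ge 2d$. So each link has either $2d$ vertices, in which case Corollary~\ref{2d+2 vertex} makes it $S^{d-1}_{2d}$, or $2d+1$ vertices, in which case the induction hypothesis makes it $C_5\star S^{d-3}_{2d-4}$.

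\textbf{Some vertex has degree $2d$.} By Theorem~\ref{prop-4}, not every link is octahedral, because otherwise $M=S^d_{2d+2}$ would have $2d+2$ vertices. So some vertex has degree $2d+1$. I also want a vertex of degree $2d$. Suppose instead every vertex has degree $2d+1$. Then every vertex has exactly one non-neighbour, so $2d+3$ would be even, a contradiction. Hence some vertex $v$ has $N_M(v)=S^{d-1}_{2d}$, and $v$ has exactly two non-neighbours $x,y$.

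\textbf{Key step: showing $xy\in E(M)$ and finding the suspension structure.} Write $N_M(v)=\{a_1,b_1\}\star\cdots\star\{a_d,b_d\}$. Consider $a_1$. Its link contains $v$, together with the $2d-2$ vertices $\{a_i,b_i\}_{i\ge2}$, and possibly $x$ and/or $y$. Because $N_{N_M(a_1)}(v)=N_{N_M(v)}(a_1)$ is octahedral, the link of $a_1$ has a vertex with octahedral sub-link. The plan is to show that in $N_M(a_1)$ the vertex $v$ has a partner: a vertex $z\in\{x,y\}$ with $N_{N_M(a_1)}(z)=N_{N_M(a_1)}(v)$. This comes from the structure of the two possible links, since in $S^{d-1}_{2d}$ and in $C_5\star S^{d-3}$, every vertex whose link is the octahedral factor has an antipode. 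Using this, I would show that for some pair, say $\{a_1,b_1\}$, both $a_1$ and $b_1$ are adjacent to the same vertex $x$ and to all $a_i,b_i$ with $i\ge2$. From there I would aim to prove that $M[\{a_i,b_i\}_{i\ge 2}]=S^{d-2}_{2d-2}$ is a factor, with every other vertex adjacent to all of it. The goal is $M=G\star S^{d-2}_{2d-2}$ with $V(G)=\{v,a_1,b_1,x,y\}$. Then $G$ is a discrete $1$-pseudomanifold on $5$ vertices, and $G=C_5$ follows from degree considerations, since each vertex of $G$ has exactly $2$ neighbours in $G$.

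The main obstacle is this factor-splitting step: proving that each pair $\{a_i,b_i\}$ with $i\ge2$ is joined to $x$ and $y$. I would first try to derive it by applying the dichotomy to the links of $a_i$ and $b_i$ and counting. $\deg(a_i)\le 2d+1$ forces $a_i$ to miss at most one of $x,y$. If $a_i$ missed $y$, say, then the link of $a_i$ would have to be octahedral with the antipode of $v$ equal to $x$. This would give $N_M(x)\supseteq N_M(v)$, and Lemma~\ref{prop-5} together with Corollary~\ref{corollary:suspension} would then force $M$ to be a suspension with $2d+2$ vertices, a contradiction.
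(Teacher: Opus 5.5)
You have a genuine gap in the ``only if'' direction. Your ``if'' direction and the degree bounds are fine, but the factor-splitting step is only a plan, and the claims it rests on fail on the graph you are trying to reach. Take $M=C_5(v,p,x,y,q)\star S^{d-2}_{2d-2}$. Then $\deg(v)=2d$, $N_M(v)=\{p,q\}\star S^{d-2}_{2d-2}$, and $x,y$ are the non-neighbours of $v$.
\begin{itemize}
\item Here $p\sim x$ but $q\not\sim x$. So ``both $a_1$ and $b_1$ are adjacent to the same vertex $x$'' contradicts your own goal $M[\{v,a_1,b_1,x,y\}]=C_5$.
\item Your reason why $v$ has a partner is false. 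A vertex on the $C_5$ factor of $C_5\star S^{d-3}_{2d-4}$ has octahedral link, but no other vertex shares that link. Concretely, for $s$ in the $S^{d-2}_{2d-2}$ factor, $v$ has no partner in $N_M(s)=C_5\star S^{d-3}_{2d-4}$. So a partner exists only if you choose the right pair, and you never show how to choose it.
\item Your closing argument is invalid. The vertex $p$ misses $y$, and $N_M(p)=\{v,x\}\star S^{d-2}_{2d-2}$ is octahedral with $x$ antipodal to $v$, yet $x\not\sim q$. In general, $N_M(a_i)=\{v,x\}\star\bigl(N_M(v)\setminus\{a_i,b_i\}\bigr)$ only gives that $x$ is adjacent to $N_M(v)\setminus\{b_i\}$, not $N_M(x)\supseteq N_M(v)$. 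Since the argument uses nothing that distinguishes $i\ge 2$ from the pair lying on the $C_5$, it would equally ``refute'' the true example.
\end{itemize}

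The missing idea is that you never need a vertex of degree $2d$. You already showed that some vertex $u$ has $\deg(u)=2d+1=f_0(M)-2$; let $w$ be its unique non-neighbour.
\begin{itemize}
\item Every neighbour of $w$ lies in $V(M)\setminus\{u,w\}=V(N_M(u))$. So $N_M(w)$ is an induced subgraph of the discrete $(d-1)$-pseudomanifold $N_M(u)$, and is itself a discrete $(d-1)$-pseudomanifold.
\item Lemma~\ref{prop-5} then forces $N_M(w)=N_M(u)$, and Corollary~\ref{corollary:suspension} gives $M=N_M(u)\star\{u,w\}$.
\item $N_M(u)$ is a discrete $(d-1)$-pseudomanifold on $2(d-1)+3$ vertices, so the induction hypothesis yields $N_M(u)=C_5\star S^{d-3}_{2d-4}$. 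Hence $M=C_5\star S^{d-2}_{2d-2}$.
\end{itemize}
This is the paper's proof. It makes the parity count and the whole factor-splitting step unnecessary.
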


\begin{proof}
We proceed by induction on the dimension. For $d=1$, if $f_0(M)=5$, then $M$ is a discrete $1$-pseudomanifold with five vertices. Hence $M$ must be a cycle of length five, that is, $M= C_5$.

Now assume that the statement holds for all dimensions strictly smaller than $d$, and let $M$ be a discrete $d$-pseudomanifold with $f_0(M)=2d+3$. For any vertex $v\in V(M)$, the graph $N_M(v)$ is a discrete $(d-1)$-pseudomanifold. Therefore, by Theorem~\ref{vertex edge lowerbound}(a), $f_0(N_M(v))\ge 2d$. Furthermore, if equality holds, then $N_M(v)$ is an octahedral sphere. Suppose that $f_0(N_M(v))=2d$ for every vertex $v\in V(M)$. Then, by Theorem~\ref{prop-4}, the graph $M$ must be the $d$-dimensional octahedral sphere. However, the $d$-dimensional octahedral sphere has $2d+2$ vertices, which contradicts the assumption that $f_0(M)=2d+3$. Consequently, there exists a vertex $u\in V(M)$ such that $f_0(N_M(u))\ge 2d+1$.

By Lemma~\ref{lemma-1}, there exists a vertex $w\in V(M)$ which is not adjacent to $u$. Hence no vertex of $N_M(u)$ can coincide with $u$ or $w$. Since $f_0(M)=2d+3$, it follows that $f_0(N_M(u))=2d+1$, and consequently
$
V(M)=V(N_M(u))\sqcup\{u,w\}.
$
It follows from Lemma~\ref{prop-5} that $N_M(u)=N_M(w)$. Indeed, every vertex of $N_M(u)$ is adjacent to both $u$ and $w$. Hence $M$ is the suspension of $N_M(u)$ with suspension vertices $u$ and $w$.

Since $N_M(u)$ is a discrete $(d-1)$-pseudomanifold on $2d+1=2(d-1)+3$ vertices, the induction hypothesis implies that
$
N_M(u)= C_5\star S^{d-3}_{2d-4}.
$
Therefore,
$
M= \{u,w\}\star\bigl(C_5\star S^{d-3}_{2d-2}\bigr)
= C_5\star S^{d-2}_{2d-2}.
$
This proves the forward implication. The converse is immediate, since
$
f_0\!\left(C_5\star S^{d-2}_{2d-2}\right)=2d+3.
$
\end{proof}

\begin{lemma} \label{lem-3}
 Let $M$ be a discrete $d$-pseudomanifold and let $u,v\in V(M)$ be adjacent vertices. Then $N_M(uv)$ is a discrete $(d-2)$-pseudomanifold.
\end{lemma}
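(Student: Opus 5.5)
The plan is to identify $N_M(uv)$ with a neighbor graph inside a neighbor graph, and then apply the definition twice. Since $uv\in E(M)$, the vertex $v$ lies in $V(N_M(u))$. By definition, $N_M(u)$ is a discrete $(d-1)$-pseudomanifold, and so the neighbor graph of $v$ inside it, $N_{N_M(u)}(v)$, is a discrete $(d-2)$-pseudomanifold. (For $d=2$ this means a discrete $0$-pseudomanifold; that case needs the reading of the definition implicit in Corollary~\ref{2d+2 vertex}, namely two non-adjacent vertices. The essential content is $d\ge 3$, where everything stays within the defined range, and the case $d=2$ is handled by the same identification.) It therefore suffices to prove the graph equality
\[
N_M(uv)=N_{N_M(u)}(v).
\]

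We check vertices first. A vertex $w$ lies in $V(N_{N_M(u)}(v))$ if and only if $w\in V(N_M(u))$ and $vw\in E(N_M(u))$. Since $N_M(u)$ is the induced subgraph of $M$ on the neighbors of $u$, and $v$ is one of them, this holds if and only if $uw\in E(M)$ and $vw\in E(M)$. That is exactly the condition for $w\in V(N_M(uv))$. Note that $w\neq u$ automatically and $w\neq v$ since $M$ is simple.

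Next we check edges. Both graphs are induced subgraphs of $M$ on the same vertex set. $N_M(uv)$ is induced by definition. $N_{N_M(u)}(v)$ is induced in $N_M(u)$, which is induced in $M$, so it is induced in $M$ as well. This is the same transitivity of induced subgraphs used in the proof of Lemma~\ref{prop-5}. Hence the edge sets agree, the identification holds, and the lemma follows.

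There is no real obstacle here. The only care needed is to keep track that inducedness passes through two levels, and to note the degenerate $d=2$ convention.
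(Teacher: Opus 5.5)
Your proposal is correct and matches the paper's proof: the paper identifies $N_M(uv)$ with $N_{N_M(v)}(u)$, which is your identification with the roles of $u$ and $v$ swapped, and then applies the definition twice. Your explicit check that the edge sets agree, via transitivity of induced subgraphs, fills in a step the paper leaves implicit (it verifies only the vertex sets). Your remark on the $d=2$ convention for discrete $0$-pseudomanifolds is also fair, since the paper does not address it.
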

\begin{proof}
By definition,
$
V(N_M(uv))=\{\,w\in V(M):uw,vw\in E(M)\,\}
=\{\,w\in V(N_M(v)):uw\in E(N_M(v))\,\}
=V(N_{N_M(v)}(u)).
$
Hence $N_M(uv)=N_{N_M(v)}(u)$. Since $M$ is a discrete $d$-pseudomanifold, $N_M(v)$ is a discrete $(d-1)$-pseudomanifold. Therefore, $N_{N_M(v)}(u)$ is a discrete $(d-2)$-pseudomanifold, and consequently $N_M(uv)$ is a discrete $(d-2)$-pseudomanifold.
\end{proof}

\begin{lemma} \label{lem-4}
Let $M$ be a discrete $d$-pseudomanifold on $n$ vertices. If there exists a vertex $v\in V(M)$ with $\deg(v)=n-2$, then there exists a vertex $u\in V(M)$ not adjacent to $v$ such that
$
M=M^{d-1}_{n-2}\star\{u,v\},
$
where $M^{d-1}_{n-2}$ is a discrete $(d-1)$-pseudomanifold.

\end{lemma}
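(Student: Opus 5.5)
Since $\deg(v)=n-2$ and $v$ is not adjacent to itself, there is exactly one vertex $u\in V(M)$ with $u\neq v$ and $uv\notin E(M)$. This gives the partition
\[
V(M)=V(N_M(v))\sqcup\{u,v\}.
\]
The goal is to show that $N_M(u)=N_M(v)$. Once this is done, Corollary~\ref{corollary:suspension} gives $M=N_M(v)\star\{u,v\}$. We then take $M^{d-1}_{n-2}:=N_M(v)$. This is a discrete $(d-1)$-pseudomanifold because $M$ is a discrete $d$-pseudomanifold, and it has $n-2$ vertices because $\deg(v)=n-2$.

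To prove $N_M(u)=N_M(v)$, first note that $u$ is not adjacent to $v$, so $V(N_M(u))\subseteq V(N_M(v))$. If equality of vertex sets holds, the two induced subgraphs of $M$ coincide, since both are induced on the same vertex set. It therefore suffices to rule out $V(N_M(u))\subsetneq V(N_M(v))$.

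Suppose this strict inclusion holds, and set $S=V(N_M(u))$. Then $S$ is nonempty, because $M$ is connected and $u$ has a neighbor (in fact $N_M(u)$ has at least $2d$ vertices by Theorem~\ref{vertex edge lowerbound}). Also $S$ is a proper subset of $V(N_M(v))$. The induced subgraph $N_M(v)[S]$ equals $M[S]=N_M(u)$, which is a discrete $(d-1)$-pseudomanifold. This contradicts Lemma~\ref{prop-5} applied to the discrete $(d-1)$-pseudomanifold $N_M(v)$, since that lemma would force the dimension $d-1$ of $N_M(v)[S]$ to be strictly less than $d-1$.

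The only delicate point is checking that induced subgraphs are compatible, namely that $N_M(v)[S]=M[S]$ because $S\subseteq V(N_M(v))$, so that Lemma~\ref{prop-5} applies verbatim. For $d=1$ the same argument works directly with cycles, which is consistent with Lemma~\ref{prop-5}.
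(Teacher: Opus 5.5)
Your proof is correct and follows essentially the paper's argument: the unique non-neighbour $u$ of $v$ satisfies $N_M(u)\subseteq N_M(v)$, and Lemma~\ref{prop-5} applied inside the discrete $(d-1)$-pseudomanifold $N_M(v)$ forces equality, which gives the suspension. Your checks that $N_M(v)[S]=M[S]$ and that $S\neq\emptyset$ only make explicit what the paper leaves implicit, and Corollary~\ref{corollary:suspension} is not really needed at the end: once $V(M)=V(N_M(v))\sqcup\{u,v\}$ and $N_M(u)=N_M(v)$, the join structure can be read off directly.
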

\begin{proof}
Since $\deg(v)=n-2$, there exists a unique vertex $u\in V(M)$ that is not adjacent to $v$. Hence
$
V(N_M(v))=V(M)\setminus\{u,v\},
$
and therefore $N_M(u)\subseteq N_M(v)$. Since $M$ is a discrete $d$-pseudomanifold, both $N_M(u)$ and $N_M(v)$ are discrete $(d-1)$-pseudomanifolds. Thus, by Lemma~\ref{prop-5},
$
N_M(u)=N_M(v).
$
Consequently,
$
M=N_M(v)\star\{u,v\},
$
where $N_M(v)$ is a discrete $(d-1)$-pseudomanifold. This completes the proof.
\end{proof}

\begin{lemma}\label{adjacency of vertices}
Let $M$ be a discrete $d$-pseudomanifold. Suppose
$
V(M)=\{v\}\cup V\bigl(N_M(v)\bigr)\cup \{u_1,u_2,\dots,u_k\},
$
where $u_i v\notin E(M)$ for all $i\in\{1,2,\dots,k\}$, and $k\geq 2$. Then, for every
$i\in\{1,2,\dots,k\}$, there exists
$j\in\{1,2,\dots,k\}$ with $j\neq i$ such that
$
u_i u_j\in E(M).
$
\end{lemma}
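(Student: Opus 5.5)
We argue by contradiction: fix $i$ and suppose $u_i$ is adjacent to no other $u_j$. Since $u_iv\notin E(M)$ by hypothesis, every neighbor of $u_i$ must then lie in $V(N_M(v))$. The target is to show that $N_M(u_i)$ and $N_M(v)$ are two discrete $(d-1)$-pseudomanifolds, one an induced subgraph of the other, and use Lemma~\ref{prop-5} to force equality. Equality will then feed Corollary~\ref{corollary:suspension} and contradict $k\ge 2$.

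The key steps, in order, are as follows.

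\textbf{Step 1.} Put $S=V(N_M(u_i))$. By assumption $S\subseteq V(N_M(v))$, and $S$ is nonempty because $N_M(u_i)$ is a discrete $(d-1)$-pseudomanifold.

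\textbf{Step 2.} Both $N_M(u_i)$ and $N_M(v)$ are induced subgraphs of $M$. Hence $N_M(u_i)=N_M(v)[S]$, which is a discrete $(d-1)$-pseudomanifold sitting inside the discrete $(d-1)$-pseudomanifold $N_M(v)$.

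\textbf{Step 3.} If $S\subsetneq V(N_M(v))$, then Lemma~\ref{prop-5}, applied with $k=d-1$ to the ambient $(d-1)$-pseudomanifold $N_M(v)$, gives $d-1<d-1$, a contradiction. Therefore $S=V(N_M(v))$, and so $N_M(u_i)=N_M(v)$.

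\textbf{Step 4.} Corollary~\ref{corollary:suspension} now gives $M=N_M(v)\star\{u_i,v\}$. Thus $V(M)=V(N_M(v))\cup\{v,u_i\}$. This contradicts the presence of a further vertex $u_j$ with $j\ne i$, which exists since $k\ge 2$ and which lies outside $V(N_M(v))\cup\{v\}$.

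The only subtle point is Step~3, namely checking that Lemma~\ref{prop-5} may be applied inside the graph $N_M(v)$ rather than in $M$. This works because $N_M(v)$ is itself a discrete $(d-1)$-pseudomanifold and $N_M(v)[S]$ is exactly $N_M(u_i)$. The case $d=1$ should be checked separately. There Lemma~\ref{prop-5} is applied to a $0$-dimensional object, so instead one argues directly: a cycle $N_M(u_i)$ of length at least $4$ cannot be a proper induced subgraph of the cycle $N_M(v)$. This also matches the last step of the proof of Lemma~\ref{prop-5}.
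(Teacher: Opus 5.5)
Your proof is correct and is essentially the paper's own argument: if $u_i$ has no neighbour among the other $u_j$, then $N_M(u_i)\subseteq N_M(v)$; Lemma~\ref{prop-5} forces equality; and Corollary~\ref{corollary:suspension} then leaves no room for a second $u_j$. Only your closing aside is off by one: for $d=1$ the graphs $N_M(u_i)$ and $N_M(v)$ are pairs of non-adjacent vertices, not cycles (your cycle argument is the case $d=2$, which Lemma~\ref{prop-5} already covers), and for $d=1$ you can simply note that $M=C_n$ with $n\ge 5$, so the $u_j$ form a path on $n-3\ge 2$ vertices.
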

\begin{proof}
Fix $u_i$. Suppose, for contradiction, that $u_i u_j \notin E(M)$ for all $j \in \{1,2 \dots k\}$ with $j\neq i$. Then $N_M(u_i)\subseteq N_M(v)$. Since both $N_M(u_i)$ and $N_M(v)$ are discrete $(d-1)$-pseudomanifolds, Lemma~\ref{prop-5} implies that
$
N_M(u_i)=N_M(v).
$
Then, by Corollary~\ref{corollary:suspension}, $M = N_M(v)\star \{v,u_i\}$. This is a contradiction.  This completes the proof.
\end{proof}

\begin{lemma} \label{lem-6}
Let $M$ be a discrete $d$-pseudomanifold with $n$ vertices. Then, for every vertex $v \in V(M)$, its degree satisfies
$
\deg(v)\in\{2d,\,2d+1,\,\dots,\,n-2\}.
$
\end{lemma}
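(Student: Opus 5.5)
The statement has two inequalities, and I would prove them separately. Both follow quickly from results already established.

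\emph{Lower bound.} Fix $v\in V(M)$. By definition $N_M(v)$ is a discrete $(d-1)$-pseudomanifold. Theorem~\ref{vertex edge lowerbound}(a), applied in dimension $d-1$, gives
\[
\deg(v)=f_0(N_M(v))\ge 2d .
\]
For $d=1$ this reads $\deg(v)\ge 2$, which is consistent with the neighbor graph being two points. More precisely, the recursion bottoms out at the convention that a discrete $0$-pseudomanifold is $S^0_2$, so the same argument covers this case.

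\emph{Upper bound.} The vertex $v$ is not its own neighbor, so $\deg(v)\le n-1$. To exclude $\deg(v)=n-1$, I would suppose $v$ is adjacent to every other vertex. Then $M=N_M(v)\star\{v\}$, so $M$ is a cone. This contradicts Corollary~\ref{lem-2}, which is just Lemma~\ref{lemma-1} applied to $v$ and any other vertex. Hence $\deg(v)\le n-2$.

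\emph{Conclusion.} Combining the two bounds gives $\deg(v)\in\{2d,2d+1,\dots,n-2\}$. This set is nonempty because $n\ge 2d+2$ by Theorem~\ref{vertex edge lowerbound}(a).

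I do not expect a real obstacle here. The only point needing care is the base case $d=1$, where the lower bound should be read through the $0$-dimensional convention, or checked directly: every vertex of a cycle has degree $2=2d$, and $n-2\ge 2$ since $n\ge 4$.
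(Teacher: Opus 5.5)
Your proof is correct and is essentially the paper's argument: the lower bound comes from Theorem~\ref{vertex edge lowerbound}(a) applied to $N_M(v)$, and $\deg(v)=n-1$ is excluded because $M$ would then be a cone, contradicting Corollary~\ref{lem-2}; your direct check of $d=1$ is a useful addition, since the paper applies that theorem in dimension $d-1$ although it is stated only for dimension at least $1$. One small quibble: Corollary~\ref{lem-2} is not literally Lemma~\ref{lemma-1} ``applied to $v$,'' because that lemma only produces some non-adjacent pair rather than a non-neighbour of a prescribed vertex; however, you cite the corollary itself, so your argument does not depend on that remark.
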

\begin{proof}
Since, by definition, $N_M(v)$ is a discrete $(d-1)$-pseudomanifold, it follows from Theorem~\ref{vertex edge lowerbound} that
$
2d \leq \deg(v) \leq n-1.
$
Moreover, if $\deg(v)=n-1$, then $M$ would be a coned graph, which contradicts Lemma~\ref{lem-2}. This completes the proof.
\end{proof}
\subsection{Discrete $d$-pseudomanifold with $2d+4$ vertices}

Let $M$ be a discrete $d$-pseudomanifold with $2d+4$ vertices. By the
definition of a discrete $d$-pseudomanifold, when $d=1$ and
$f_0(M)=6$, we have $M= C_6$. For $d\geq2$, we have the following
results.\\

\begin{figure}
    \begin{center}

\tikzset{every picture/.style={line width=0.75pt}} 

\begin{tikzpicture}[x=0.5pt,y=0.5pt,yscale=-1,xscale=1]

\draw   (170.71,172.15) -- (148.26,243.63) -- (74.82,244.02) -- (51.88,172.79) -- (111.14,128.37) -- cycle ;
\draw    (88.65,190) -- (131.63,191.24) ;
\draw    (88.65,190) -- (111.15,128.37) ;
\draw    (88.65,190) -- (51.88,172.79) ;
\draw    (74.81,244.02) -- (88.65,190) ;
\draw    (148.25,243.63) -- (88.65,190) ;
\draw    (131.63,191.24) -- (111.15,128.37) ;
\draw    (170.71,172.15) -- (131.63,191.24) ;
\draw    (148.25,243.63) -- (131.63,191.24) ;
\draw    (110.67,83.02) .. controls (-9.9,130.33) and (34.55,271.69) .. (74.81,244.02) ;
\draw    (110.67,83.02) .. controls (232.9,129.04) and (203.08,274.31) .. (148.25,243.63) ;
\draw    (110.67,83.02) .. controls (166.08,125.57) and (173.83,163.67) .. (170.71,172.15) ;
\draw    (110.67,83.02) .. controls (89.23,109.68) and (46.97,138.24) .. (51.88,172.79) ;
\draw    (110.67,83.02) -- (111.15,128.37) ;
\draw   (317.3,134.83) -- (377.56,163.23) -- (377.22,219.57) -- (316.61,247.5) -- (256.35,219.1) -- (256.7,162.76) -- cycle ;
\draw    (295.18,187.99) -- (342.22,188.42) ;
\draw    (256.69,162.75) -- (295.18,187.99) ;
\draw    (317.3,134.82) -- (295.18,187.99) ;
\draw    (295.18,187.99) -- (256.35,219.1) ;
\draw    (295.18,187.99) -- (316.62,247.51) ;
\draw    (295.18,187.99) -- (377.22,219.58) ;
\draw    (342.22,188.42) -- (377.22,219.58) ;
\draw    (342.22,188.42) -- (377.56,163.23) ;
\draw    (342.22,188.42) -- (317.3,134.82) ;
\draw [color={rgb, 255:red, 0; green, 0; blue, 0 }  ,draw opacity=1 ]   (316.75,78.9) -- (317.3,134.82) ;
\draw [color={rgb, 255:red, 0; green, 0; blue, 0 }  ,draw opacity=1 ]   (316.75,78.9) -- (377.56,163.23) ;
\draw    (316.75,78.9) -- (256.69,162.75) ;
\draw    (316.75,78.9) .. controls (317.82,78.47) and (382.01,141.02) .. (385.72,163.29) .. controls (389.43,185.57) and (394.21,198.42) .. (377.22,219.58) ;
\draw    (316.62,76.59) .. controls (336.38,87.04) and (372.46,111.89) .. (404.68,166.98) .. controls (436.91,222.07) and (321,257.96) .. (316.62,247.51) ;
\draw    (316.75,78.9) .. controls (317.69,78.53) and (367.43,126.83) .. (381.95,153.65) ;
\draw    (256.35,219.1) .. controls (220.33,195.32) and (224.33,158.32) .. (316.75,78.9) ;
\draw   (512.75,122.31) -- (570.13,151.59) -- (569.81,209.66) -- (512.11,238.45) -- (454.73,209.17) -- (455.05,151.11) -- cycle ;
\draw    (491.7,177.11) -- (536.48,177.55) ;
\draw    (455.05,151.1) -- (491.7,177.11) ;
\draw    (512.76,122.31) -- (491.7,177.11) ;
\draw    (491.7,177.11) -- (454.73,209.17) ;
\draw    (491.7,177.11) -- (512.11,238.46) ;
\draw    (536.48,177.55) -- (569.81,209.66) ;
\draw    (536.48,177.55) -- (570.13,151.59) ;
\draw    (536.48,177.55) -- (512.76,122.31) ;
\draw [color={rgb, 255:red, 0; green, 0; blue, 0 }  ,draw opacity=1 ]   (512.24,64.68) -- (512.76,122.31) ;
\draw [color={rgb, 255:red, 0; green, 0; blue, 0 }  ,draw opacity=1 ]   (512.24,64.68) -- (570.13,151.59) ;
\draw    (512.24,64.68) -- (455.05,151.1) ;
\draw    (512.24,64.68) .. controls (513.25,64.24) and (574.36,128.7) .. (577.9,151.65) .. controls (581.44,174.61) and (585.98,187.86) .. (569.81,209.66) ;
\draw    (512.11,62.3) .. controls (530.93,73.07) and (565.27,98.67) .. (595.95,155.45) .. controls (626.63,212.23) and (516.28,249.22) .. (512.11,238.46) ;
\draw    (512.24,64.68) .. controls (513.13,64.29) and (560.49,114.08) .. (574.31,141.71) ;
\draw    (454.73,209.17) .. controls (430.33,153.32) and (428.39,156.07) .. (512.24,64.68) ;
\draw    (536.48,177.55) -- (512.11,238.46) ;
\draw   (781.64,163.95) -- (755.24,232.23) -- (669.49,232.31) -- (642.89,164.09) -- (712.2,121.83) -- cycle ;
\draw   (712.17,162.14) -- (733.17,188.31) -- (691.16,188.31) -- cycle ;
\draw    (712.2,121.83) -- (712.17,162.14) ;
\draw    (642.89,164.09) -- (712.17,162.14) ;
\draw    (642.89,164.09) -- (691.16,188.31) ;
\draw    (691.16,188.31) -- (669.49,232.32) ;
\draw    (691.16,188.31) -- (755.24,232.23) ;
\draw    (733.17,188.31) -- (755.24,232.23) ;
\draw    (733.17,188.31) -- (781.64,163.95) ;
\draw    (712.17,162.14) -- (785.29,164.02) ;
\draw    (711.69,69) -- (712.2,121.83) ;
\draw    (711.69,69) -- (642.89,164.09) ;
\draw    (711.69,69) .. controls (809.82,162.68) and (813.19,159.45) .. (755.24,232.23) ;
\draw    (669.49,232.32) .. controls (614.99,159.45) and (604.41,184.01) .. (711.69,69) ;
\draw    (711.69,69) -- (785.29,164.02) ;

\draw (105.63,64.25) node [anchor=north west][inner sep=0.75pt]  [font=\footnotesize,rotate=-2.96]  {$v$};
\draw (107.35,263.5) node [anchor=north west][inner sep=0.75pt]  [font=\small,rotate=-2.96]  {$W_{8}^{2}$};
\draw (312.63,58.82) node [anchor=north west][inner sep=0.75pt]  [font=\footnotesize]  {$v$};
\draw (322,258.72) node [anchor=north west][inner sep=0.75pt]  [font=\small]  {$W_{9a}^{2}$};
\draw (516,253.72) node [anchor=north west][inner sep=0.75pt]  [font=\small]  {$W_{9b}^{2}$};
\draw (717,245.72) node [anchor=north west][inner sep=0.75pt]  [font=\small]  {$W_{9c}^{2}$};
\draw (76,261.72) node [anchor=north west][inner sep=0.75pt]    {$( a)$};
\draw (292,259.72) node [anchor=north west][inner sep=0.75pt]    {$( b)$};
\draw (488,252.72) node [anchor=north west][inner sep=0.75pt]    {$( c)$};
\draw (684,244.72) node [anchor=north west][inner sep=0.75pt]    {$( d)$};
\draw (506.63,44.82) node [anchor=north west][inner sep=0.75pt]  [font=\footnotesize]  {$v$};
\draw (704.63,47.82) node [anchor=north west][inner sep=0.75pt]  [font=\footnotesize]  {$v$};

\draw   (110.67, 83.02) circle [x radius= 5, y radius= 5]   ;
\draw   (110.67, 83.02) circle [x radius= 5, y radius= 5]   ;
\draw   (110.67, 83.02) circle [x radius= 5, y radius= 5]   ;
\draw   (170.71, 172.15) circle [x radius= 5, y radius= 5]   ;
\draw   (131.63, 191.24) circle [x radius= 5, y radius= 5]   ;
\draw   (131.63, 191.24) circle [x radius= 5, y radius= 5]   ;
\draw   (131.63, 191.24) circle [x radius= 5, y radius= 5]   ;
\draw   (148.25, 243.63) circle [x radius= 5, y radius= 5]   ;
\draw   (88.65, 190) circle [x radius= 5, y radius= 5]   ;
\draw   (88.65, 190) circle [x radius= 5, y radius= 5]   ;
\draw   (88.65, 190) circle [x radius= 5, y radius= 5]   ;
\draw   (88.65, 190) circle [x radius= 5, y radius= 5]   ;
\draw   (148.25, 243.63) circle [x radius= 5, y radius= 5]   ;
\draw   (51.88, 172.79) circle [x radius= 5, y radius= 5]   ;
\draw   (88.65, 190) circle [x radius= 5, y radius= 5]   ;
\draw   (88.65, 190) circle [x radius= 5, y radius= 5]   ;
\draw   (88.65, 190) circle [x radius= 5, y radius= 5]   ;
\draw   (74.81, 244.01) circle [x radius= 5, y radius= 5]   ;
\draw   (88.65, 190) circle [x radius= 5, y radius= 5]   ;
\draw   (88.65, 190) circle [x radius= 5, y radius= 5]   ;
\draw   (295.18, 187.99) circle [x radius= 5, y radius= 5]   ;
\draw   (295.18, 187.99) circle [x radius= 5, y radius= 5]   ;
\draw   (295.18, 187.99) circle [x radius= 5, y radius= 5]   ;
\draw   (295.18, 187.99) circle [x radius= 5, y radius= 5]   ;
\draw   (295.18, 187.99) circle [x radius= 5, y radius= 5]   ;
\draw   (342.22, 188.42) circle [x radius= 5, y radius= 5]   ;
\draw   (342.22, 188.42) circle [x radius= 5, y radius= 5]   ;
\draw   (342.22, 188.42) circle [x radius= 5, y radius= 5]   ;
\draw   (377.21, 219.57) circle [x radius= 5, y radius= 5]   ;
\draw   (295.18, 187.99) circle [x radius= 5, y radius= 5]   ;
\draw   (295.18, 187.99) circle [x radius= 5, y radius= 5]   ;
\draw   (295.18, 187.99) circle [x radius= 5, y radius= 5]   ;
\draw   (295.18, 187.99) circle [x radius= 5, y radius= 5]   ;
\draw   (377.22, 219.58) circle [x radius= 5, y radius= 5]   ;
\draw   (377.22, 219.58) circle [x radius= 5, y radius= 5]   ;
\draw   (377.56, 163.23) circle [x radius= 5, y radius= 5]   ;
\draw   (342.22, 188.42) circle [x radius= 5, y radius= 5]   ;
\draw   (342.22, 188.42) circle [x radius= 5, y radius= 5]   ;
\draw   (377.56, 163.23) circle [x radius= 5, y radius= 5]   ;
\draw   (317.3, 134.83) circle [x radius= 5, y radius= 5]   ;
\draw   (317.3, 134.82) circle [x radius= 5, y radius= 5]   ;
\draw   (342.22, 188.42) circle [x radius= 5, y radius= 5]   ;
\draw   (317.3, 134.82) circle [x radius= 5, y radius= 5]   ;
\draw   (256.71, 162.76) circle [x radius= 5, y radius= 5]   ;
\draw   (295.18, 187.99) circle [x radius= 5, y radius= 5]   ;
\draw   (295.18, 187.99) circle [x radius= 5, y radius= 5]   ;
\draw   (295.18, 187.99) circle [x radius= 5, y radius= 5]   ;
\draw   (256.69, 162.75) circle [x radius= 5, y radius= 5]   ;
\draw   (256.35, 219.1) circle [x radius= 5, y radius= 5]   ;
\draw   (295.18, 187.99) circle [x radius= 5, y radius= 5]   ;
\draw   (295.18, 187.99) circle [x radius= 5, y radius= 5]   ;
\draw   (316.61, 247.5) circle [x radius= 5, y radius= 5]   ;
\draw   (295.18, 187.99) circle [x radius= 5, y radius= 5]   ;
\draw   (316.75, 78.9) circle [x radius= 5, y radius= 5]   ;
\draw   (316.75, 78.9) circle [x radius= 5, y radius= 5]   ;
\draw   (316.75, 78.9) circle [x radius= 5, y radius= 5]   ;
\draw   (316.75, 78.9) circle [x radius= 5, y radius= 5]   ;
\draw   (491.7, 177.11) circle [x radius= 5, y radius= 5]   ;
\draw   (491.7, 177.11) circle [x radius= 5, y radius= 5]   ;
\draw   (491.7, 177.11) circle [x radius= 5, y radius= 5]   ;
\draw   (491.7, 177.11) circle [x radius= 5, y radius= 5]   ;
\draw   (536.48, 177.55) circle [x radius= 5, y radius= 5]   ;
\draw   (536.48, 177.55) circle [x radius= 5, y radius= 5]   ;
\draw   (536.48, 177.55) circle [x radius= 5, y radius= 5]   ;
\draw   (536.48, 177.55) circle [x radius= 5, y radius= 5]   ;
\draw   (536.48, 177.55) circle [x radius= 5, y radius= 5]   ;
\draw   (536.48, 177.55) circle [x radius= 5, y radius= 5]   ;
\draw   (570.13, 151.59) circle [x radius= 5, y radius= 5]   ;
\draw   (536.48, 177.55) circle [x radius= 5, y radius= 5]   ;
\draw   (512.76, 122.32) circle [x radius= 5, y radius= 5]   ;
\draw   (512.76, 122.31) circle [x radius= 5, y radius= 5]   ;
\draw   (536.48, 177.55) circle [x radius= 5, y radius= 5]   ;
\draw   (512.76, 122.31) circle [x radius= 5, y radius= 5]   ;
\draw   (536.48, 177.55) circle [x radius= 5, y radius= 5]   ;
\draw   (455.06, 151.1) circle [x radius= 5, y radius= 5]   ;
\draw   (491.7, 177.11) circle [x radius= 5, y radius= 5]   ;
\draw   (491.7, 177.11) circle [x radius= 5, y radius= 5]   ;
\draw   (491.7, 177.11) circle [x radius= 5, y radius= 5]   ;
\draw   (455.05, 151.1) circle [x radius= 5, y radius= 5]   ;
\draw   (454.73, 209.17) circle [x radius= 5, y radius= 5]   ;
\draw   (491.7, 177.11) circle [x radius= 5, y radius= 5]   ;
\draw   (491.7, 177.11) circle [x radius= 5, y radius= 5]   ;
\draw   (512.1, 238.45) circle [x radius= 5, y radius= 5]   ;
\draw   (491.7, 177.11) circle [x radius= 5, y radius= 5]   ;
\draw   (512.24, 64.68) circle [x radius= 5, y radius= 5]   ;
\draw   (512.24, 64.68) circle [x radius= 5, y radius= 5]   ;
\draw   (512.24, 64.68) circle [x radius= 5, y radius= 5]   ;
\draw   (569.8, 209.66) circle [x radius= 5, y radius= 5]   ;
\draw   (536.48, 177.55) circle [x radius= 5, y radius= 5]   ;
\draw   (712.17, 162.14) circle [x radius= 5, y radius= 5]   ;
\draw   (712.17, 162.14) circle [x radius= 5, y radius= 5]   ;
\draw   (691.16, 188.31) circle [x radius= 5, y radius= 5]   ;
\draw   (691.16, 188.31) circle [x radius= 5, y radius= 5]   ;
\draw   (691.16, 188.31) circle [x radius= 5, y radius= 5]   ;
\draw   (691.16, 188.31) circle [x radius= 5, y radius= 5]   ;
\draw   (691.16, 188.31) circle [x radius= 5, y radius= 5]   ;
\draw   (691.16, 188.31) circle [x radius= 5, y radius= 5]   ;
\draw   (733.17, 188.31) circle [x radius= 5, y radius= 5]   ;
\draw   (733.17, 188.31) circle [x radius= 5, y radius= 5]   ;
\draw   (733.17, 188.31) circle [x radius= 5, y radius= 5]   ;
\draw   (733.17, 188.31) circle [x radius= 5, y radius= 5]   ;
\draw   (712.2, 121.84) circle [x radius= 5, y radius= 5]   ;
\draw   (712.17, 162.14) circle [x radius= 5, y radius= 5]   ;
\draw   (712.17, 162.14) circle [x radius= 5, y radius= 5]   ;
\draw   (712.2, 121.83) circle [x radius= 5, y radius= 5]   ;
\draw   (781.6, 163.92) circle [x radius= 5, y radius= 5]   ;
\draw   (712.17, 162.14) circle [x radius= 5, y radius= 5]   ;
\draw   (785.29, 164.02) circle [x radius= 5, y radius= 5]   ;
\draw   (755.24, 232.23) circle [x radius= 5, y radius= 5]   ;
\draw   (755.24, 232.23) circle [x radius= 5, y radius= 5]   ;
\draw   (733.17, 188.31) circle [x radius= 5, y radius= 5]   ;
\draw   (669.49, 232.31) circle [x radius= 5, y radius= 5]   ;
\draw   (691.16, 188.31) circle [x radius= 5, y radius= 5]   ;
\draw   (691.16, 188.31) circle [x radius= 5, y radius= 5]   ;
\draw   (642.89, 164.09) circle [x radius= 5, y radius= 5]   ;
\draw   (642.89, 164.09) circle [x radius= 5, y radius= 5]   ;
\draw   (691.16, 188.31) circle [x radius= 5, y radius= 5]   ;
\draw   (642.89, 164.09) circle [x radius= 5, y radius= 5]   ;
\draw   (642.89, 164.09) circle [x radius= 5, y radius= 5]   ;
\draw   (642.89, 164.09) circle [x radius= 5, y radius= 5]   ;
\draw   (642.89, 164.09) circle [x radius= 5, y radius= 5]   ;
\draw   (711.69, 69) circle [x radius= 5, y radius= 5]   ;
\draw   (711.69, 69) circle [x radius= 5, y radius= 5]   ;
\draw   (711.69, 69) circle [x radius= 5, y radius= 5]   ;
\draw   (111.15, 128.38) circle [x radius= 5, y radius= 5]   ;
\draw   (131.63, 191.24) circle [x radius= 5, y radius= 5]   ;
\draw   (111.15, 128.37) circle [x radius= 5, y radius= 5]   ;
\draw   (131.63, 191.24) circle [x radius= 5, y radius= 5]   ;
\end{tikzpicture}

\caption{Some discrete $2$-pseudomanifolds on $8$ and $9$ vertices.}
\label{fig:1}
    \end{center}
\end{figure}

\begin{theorem}\label{dim-2,vertex-8}
Let $M$ be a discrete $2$-pseudomanifold with $8$ vertices. Then, up to isomorphism,
$
M = C_6 \star S^{0}_2 \quad \text{or} \quad M = W_8^{2}$, where $W_8^2$ is the graph shown in Figure \ref{fig:1}.
\end{theorem}

\begin{proof}
Since $M$ is a discrete $2$-pseudomanifold on $8$ vertices, Lemma~\ref{lem-6} implies that
$
\deg(v)\in\{4,5,6\}
$
for all $v\in V(M)$.

\noindent \textbf{Case 1.} Suppose there exists $v\in V(M)$ with $\deg(v)=6$. Then, by Lemma~\ref{lem-4},
$
M= C_6 \star S_2^0.
$

\noindent \textbf{Case 2.} Suppose that $\deg(x)\leq 5$ for all $x\in V(M)$ and that there exists
$v\in V(M)$ with $\deg(v)=5$. By Lemma~\ref{adjacency of vertices}, there
exist vertices $u,w\in V(M)\setminus V(N_M(v))$ such that $uw\in E(M)$.
Consequently, one of $N_M(u)$ and $N_M(w)$ is a 5-cycle, while
the other is a 4-cycle. Hence, up to isomorphism,
$
M= W_8^2.
$

\noindent \textbf{Case 3.} If $\deg(v)=4$ for all $v\in V(M)$, then by Theorem~\ref{prop-4},
$M= S^2_6$, and hence $f_0(M)=6$, contradicting $f_0(M)=8$.
Therefore, this case does not occur.
\end{proof}

 \begin{theorem}\label{3dim with 10 vertices}
Let $M$ be a discrete $3$-pseudomanifold with $10$ vertices. Then, up to isomorphism,
$
M = W_8^{2} \star S^{0}_2$ or $M = C_6 \star C_4$ or $M = C_5 \star C_5.$

\end{theorem}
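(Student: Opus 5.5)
By Lemma~\ref{lem-6}, every vertex $v$ of $M$ satisfies $\deg(v)\in\{6,7,8\}$, since $n=10$ and $d=3$. Each $N_M(v)$ is a discrete $2$-pseudomanifold on $\deg(v)$ vertices. Corollary~\ref{2d+2 vertex}, Theorem~\ref{2d+3 vertex} and Theorem~\ref{dim-2,vertex-8} then give the possible links: degree $6$ forces $S^2_6$, degree $7$ forces $C_5\star S^0_2$, and degree $8$ forces $C_6\star S^0_2$ or $W_8^2$. The argument splits according to the maximum degree, in the same spirit as the proof of Theorem~\ref{dim-2,vertex-8}.

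\textbf{Case 1: some vertex has degree $8$.} Lemma~\ref{lem-4} gives $M=N_M(v)\star\{u,v\}$ with $N_M(v)$ a discrete $2$-pseudomanifold on $8$ vertices. Hence $M=W_8^2\star S^0_2$ or $M=C_6\star S^0_2\star S^0_2=C_6\star C_4$.

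\textbf{Case 2: all vertices have degree at most $7$, and some $v$ has degree $7$.} Here $N_M(v)=C_5\star\{a,b\}$ and $V(M)=\{v\}\sqcup V(N_M(v))\sqcup\{u_1,u_2\}$. By Lemma~\ref{adjacency of vertices}, $u_1u_2\in E(M)$.

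Now consider $N_M(a)$. It contains $v$, the $C_5$, possibly some $u_i$, and not $b$: otherwise $ab$ would be an edge, contradicting that $N_M(v)$ is induced. Its degree is $6$ or $7$. If it were $7$, $a$ would be adjacent to both $u_i$, and $N_M(a)$ would be a suspension of $C_5$ whose suspension vertices are $v$ and one $u_i$; but $v$ is not adjacent to $u_i$, and the other $u_j$ would then be left over, which is a contradiction. So $\deg(a)=6$ and $N_M(a)=\{v,u_1\}\star C_5$, say; symmetrically $N_M(b)=\{v,u_j\}\star C_5$ for some $j$.

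If $j=1$, then $N_M(a)=N_M(b)$, and Corollary~\ref{corollary:suspension} gives $M=N_M(a)\star\{a,b\}$, which has only $9$ vertices. Hence $j=2$, so $u_1$ and $u_2$ are both adjacent to every vertex of the $C_5$.

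Then $N_M(u_1)\supseteq C_5\cup\{a,u_2\}$, and its vertex set lies in $V(C_5)\cup\{a,u_2\}$, since $u_1$ is not adjacent to $v$ and not to $b$. Checking edges shows $N_M(u_1)=C_5\star\{a,u_2\}$, using $au_2\notin E(M)$. Likewise every vertex of the $C_5$ has link $C_5$-neighbours $\star\{v,a,b,u_1,u_2\}$, with the latter forming the $5$-cycle $v\,a\,u_1\,u_2\,b$. Therefore $M=C_5\star C_5$.

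\textbf{Case 3: all degrees are $6$.} Theorem~\ref{prop-4} gives $M=S^3_8$, contradicting $f_0(M)=10$.

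The main obstacle is Case~2. There one must carefully pin down the adjacencies of $a$, $b$, $u_1$, $u_2$. This means excluding the configurations in which some $u_i$ sees too little, using Lemma~\ref{prop-5} and Corollary~\ref{corollary:suspension}. It also means verifying that no extra edges occur, for instance between $a$ and $u_2$, which follows because each link must be exactly the forced $2$-pseudomanifold.
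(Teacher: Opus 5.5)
Your overall case structure matches the paper: Lemma~\ref{lem-4} for a degree-$8$ vertex, and Theorem~\ref{prop-4} when all degrees are $6$. Your Case~2 endgame is also a legitimate variant. However, the analysis of $N_M(a)$ in Case~2 is off by one and, as written, leaves a case open.

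Since $N_M(a)$ already contains the six vertices $\{v\}\cup V(C_5)$ and cannot contain $b$, we have $\deg(a)=6+\bigl|\{i: au_i\in E(M)\}\bigr|$. So ``$\deg(a)=7$'' means $a$ sees exactly one $u_i$, not both. Seeing both would give degree $8$, which the Case~2 hypothesis excludes directly, with no suspension argument needed. Also, $\{v,u_1\}\star C_5$ has $7$ vertices, so it cannot be the link of a degree-$6$ vertex.

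The situation that genuinely corresponds to $\deg(a)=6$ is $a$ adjacent to neither $u_1$ nor $u_2$, and you never treat it. There $N_M(a)=\{v\}\star C_5$ is a cone, which must be ruled out by Corollary~\ref{lem-2}. This is exactly the step the paper uses (``if $au,aw\notin E(M)$, then $N_M(a)=C_5\star v$''). Without it, your conclusion $N_M(a)=\{v,u_1\}\star C_5$ does not follow.

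Once that is fixed, you should also say why the single $u_i$ seen by $a$ is adjacent to all of $C_5$. $N_M(a)$ is a discrete $2$-pseudomanifold on $7$ vertices, hence $C_5\star S^0_2$ by Theorem~\ref{2d+3 vertex}. Since $v$ has degree $5$ in it, $v$ is a suspension vertex. Its antipode must be its unique non-neighbour $u_i$, giving $N_M(a)=\{v,u_i\}\star C_5$. From there your argument is correct: $i\neq j$ by Corollary~\ref{corollary:suspension}, since a common link would leave only $9$ vertices, and $M=C_5\star C_5(v,a,u_1,u_2,b)$ follows.

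The paper instead first gets $N_M(u_1u_2)=C_5$ from Lemma~\ref{lem-3}, the degree bound on $a,b$, and Lemma~\ref{prop-5}. Your route reads off the adjacency of $u_1,u_2$ to the $C_5$ from the links of $a$ and $b$. It has the small advantage of explicitly justifying the ``without loss of generality $au,bw\in E(M)$'' step that the paper leaves implicit, but both routes need the cone exclusion.
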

\begin{proof}
    $M$ is a discrete $3$-pseudomanifold with $10$ vertices.
By Lemma~\ref{lem-6}, the degree of any vertex $v \in V(M)$ satisfies
$
\deg(v) \in \{6,7,8\}.
$

\noindent \textbf{Case 1:} Suppose there exists a vertex $v \in V(M)$ such that $\deg(v)=8$.  
Then, by Lemma~\ref{lem-4}, we have
$
M = M^2_8 \star S^{0}_2,
$
where $M^2_8$ is a discrete $2$-pseudomanifold with $8$ vertices. By the  Theorem~\ref{dim-2,vertex-8}, it follows that
$
M = C_6 \star S_2^{0} \star S_2^{0} = C_6 \star C_4 $ or $ M = W_8^{2} \star S_2^{0}.
$

\noindent \textbf{Case 2:} Suppose $\deg(x)\leq 7$ for all $x\in V(M)$ and there exists a vertex $v\in V(M)$ with $\deg(v)=7$. Then $N_M(v)$ is a discrete $2$-pseudomanifold on $7$ vertices. By Theorem~\ref{2d+3 vertex},
$
N_M(v)= C_5\star S_2^0=C_5\star\{a,b\}
$
for some vertices $a,b\in V(M)$.

By Lemma~\ref{adjacency of vertices}, there exist vertices $u,w\in V(M)\setminus V(N_M(v))$ such that $uw\in E(M)$. 
Since $a$ and $b$ are adjacent to $v$ and have degree $5$ in $N_M(v)$, if either belongs to $N_M(uw)$, then its degree in $M$ would be $8$, contradicting $\deg(x)\leq 7$ for all $x\in V(M)$. Thus neither $a$ nor $b$ is adjacent to both $u$ and $w$ and so $N_M(uw)\subseteq C_5$. By Lemma~\ref{lem-3}, $N_M(uw)$ is a discrete $1$-pseudomanifold and hence a cycle $C_n$, $n\geq 4$. Therefore, by Lemma~\ref{prop-5},
$
N_M(uw)= C_5.
$

If $au,aw\notin E(M)$, then $N_M(a)= C_5\star v$, contradicting Corollary~\ref{lem-2}. Hence either $au\in E(M)$ or $aw\in E(M)$. Similarly, either $bu\in E(M)$ or $bw\in E(M)$. Since neither $a$ nor $b$ can be adjacent to both $u$ and $w$, without loss of generality, assume $au\in E(M)$ and $bw\in E(M)$. It follows that $M$ is the join of two $5$-cycles,
$
M= C_5\star C_5,
$
where one of the cycles is $C_5(v,a,u,w,b)$. This completes the proof.

\noindent \textbf{Case 3:} If $\deg(v)=6$ for all $v\in V(M)$, then by Theorem~\ref{prop-4},
$M= S^3_8$, and hence $f_0(M)=8$, contradicting $f_0(M)=10$.
Therefore, this case does not occur.
\end{proof}

\begin{theorem} \label{2d+4,d>=4}
Let $M$ be a discrete $d$-pseudomanifold with $2d+4$ vertices and $d \geq 4$. Then, up to isomorphism,
$
M = M^{d-1}_{2d+2} \star S^{0}_2,
$  where  $M^{d-1}_{2d+2}$ is a discrete $(d-1)$-pseudomanifold.
\end{theorem}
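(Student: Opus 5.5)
The plan is to find a vertex of degree $n-2 = 2d+2$ and then apply Lemma~\ref{lem-4}. By Lemma~\ref{lem-6}, every vertex $v$ of $M$ has
\[
\deg(v)\in\{2d,2d+1,2d+2\}.
\]
If some vertex has degree $2d+2$, Lemma~\ref{lem-4} gives $M=M^{d-1}_{2d+2}\star S^0_2$ directly. If every vertex has degree $2d$, Theorem~\ref{prop-4} together with Corollary~\ref{2d+2 vertex} forces $f_0(M)=2d+2$, which is a contradiction. So the remaining case is that all degrees lie in $\{2d,2d+1\}$ and some vertex $v$ has $\deg(v)=2d+1$. The goal is to rule this case out when $d\ge 4$.

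In that case $N_M(v)$ is a discrete $(d-1)$-pseudomanifold on $2(d-1)+3$ vertices. By Theorem~\ref{2d+3 vertex},
\[
N_M(v)=C_5\star S^{d-3}_{2d-4}.
\]
Since $d\ge 4$, we have $d-3\ge 1$, so the octahedral part contains at least two antipodal pairs $\{a_i,b_i\}$. The complement $V(M)\setminus(\{v\}\cup V(N_M(v)))$ consists of exactly two vertices $u,w$. By Lemma~\ref{adjacency of vertices}, $uw\in E(M)$.

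Next consider an octahedral vertex $a=a_i$. Its degree inside $N_M(v)$ is
\[
(2d+1)-2 = 2d-1,
\]
since it misses only itself and $b_i$. So in $M$ it is adjacent to $v$ and to $2d-1$ vertices of $N_M(v)$. As $\deg(a)\le 2d+1$, it is adjacent to at most one of $u,w$. If it were adjacent to neither, then $N_M(a)$ would be the cone $\{v\}\star N_{N_M(v)}(a)$, contradicting Corollary~\ref{lem-2}. Hence each $a_i,b_i$ is adjacent to exactly one of $u,w$.

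Now look at $N_M(uw)$. By Lemma~\ref{lem-3} it is a discrete $(d-2)$-pseudomanifold, and by the previous paragraph it contains no octahedral vertex. So $N_M(uw)\subseteq C_5$. But Theorem~\ref{vertex edge lowerbound}(a) requires a discrete $(d-2)$-pseudomanifold to have at least $2(d-1)\ge 6$ vertices when $d\ge 4$. This contradiction eliminates the case. This degree-count step is where $d\ge 4$ is essential; for $d=3$ the octahedral part is just $S^0_2$ and $N_M(uw)=C_5$ is allowed, which produces $C_5\star C_5$ as in Theorem~\ref{3dim with 10 vertices}.

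The step I expect to need the most care is confirming that $u,w$ really are the only two vertices outside $\{v\}\cup V(N_M(v))$, and that the degree bound $\deg(a)\le 2d+1$ is applied correctly. The count is
\[
(2d+4)-1-(2d+1)=2,
\]
and the bound holds because we are in the case with no vertex of degree $2d+2$. With these verified, the argument closes and the theorem follows.
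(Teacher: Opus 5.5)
Your proof is correct and follows essentially the same route as the paper. The paper splits by maximum degree, and in the degree-$2d+1$ case it uses $N_M(v)=C_5\star S^{d-3}_{2d-4}$ and the degree bound to keep every octahedral vertex out of $N_M(uw)$, then contradicts $f_0(N_M(uw))\ge 2(d-1)\ge 6$. The only difference is cosmetic: you get ``adjacent to at least one of $u,w$'' from the cone obstruction (Corollary~\ref{lem-2}), while the paper gets $\deg(x)=2d+1$ from the absence of an induced $5$-cycle in $S^{d-1}_{2d}$; neither step is needed for the contradiction, which uses only ``at most one.''
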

\begin{proof}
Let $M$ be a discrete $d$-pseudomanifold with $d \geq 4$ with
$
f_0(M) = 2d+4.
$
By Lemma~\ref{lem-6}, the degree of any vertex $v \in V(M)$ satisfies
$
\deg(v) \in \{2d,\, 2d+1,\, 2d+2\}.
$

\noindent  \textbf{Case 1:} Suppose there exists a vertex $v \in V(M)$ such that $\deg(v)=2d+2$.  
Then, by Lemma~\ref{lem-4}, $M$ decomposes as
$
M = M^{d-1}_{2d+2} \star S^{0}_2,
.$

\noindent \textbf{Case 2: }Suppose $\deg(x)\leq 2d+1$ for all $x\in V(M)$ and let $v\in V(M)$ with
$\deg(v)=2d+1$. Then, by Theorem~\ref{2d+3 vertex},
$N_M(v)= C_5\star S^{d-3}_{2d-4}$. By Lemma~\ref{adjacency of vertices},
there exist $u,w\in V(M)\setminus V(N_M(v))$ such that $uw\in E(M)$. For
any $x\in V(S^{d-3}_{2d-4})$, we have $\deg_{N_M(v)}(x)=2d-1$. Since
$C_5\subseteq N_M(x)$ and $S^{d-1}_{2d}$ contains no induced $5$-cycle,
$N_M(x)\not= S^{d-1}_{2d}$, and hence $\deg(x)\geq 2d+1$. Thus,
$\deg(x)=2d+1$. As $x$ is adjacent to $2d-1$ vertices of $N_M(v)$ and to
$v$, it can be adjacent to exactly one of $u,w$; otherwise
$\deg(x)=2d+2$, contradicting the assumption. On the other hand, by
Lemma~\ref{lem-3}, $N_M(uw)$ is a discrete $(d-2)$-pseudomanifold, and
Theorem~\ref{vertex edge lowerbound} implies
$f_0(N_M(uw))\geq 2(d-1)\geq 6$. Hence, some vertex
$x\in V(S^{d-3}_{2d-4})$ must be adjacent to both $u$ and $w$, which is a
contradiction. Therefore, this case is impossible.

\noindent \textbf{Case 3:} If $\deg(v)=2d$ for all $v\in V(M)$, then by Theorem~\ref{prop-4},
$M= S^d_{2d+2}$, implying $f_0(M)=2d+2$, a contradiction to
$f_0(M)=2d+4$. Hence, this case is impossible.

This completes the proof.
\end{proof}

\subsection{Discrete $d$-pseudomanifold with $2d+5$ vertices}

Let $M$ be a discrete $d$-pseudomanifold with $2d+5$ vertices. By the
definition of a discrete $d$-pseudomanifold, when $d=1$ and
$f_0(M)=7$, we have $M= C_7$. For $d\geq2$, we have the following
results.

\begin{theorem}\label{2dim 9 vertex}
Let $M$ be a discrete $2$-pseudomanifold with $9$ vertices.
Then, up to isomorphism,
$
M= C_7\star S_2^0$ or $
 W^2_{9a}$ or $ W^2_{9b}$ or $W^2_{9c},
$
where $W^2_{9a}$, $W^2_{9b}$, and $W^2_{9c}$ are the graphs depicted
in Figure~\ref{fig:1}.
\end{theorem}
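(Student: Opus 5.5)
We argue by cases on the maximum degree, as in Theorem~\ref{dim-2,vertex-8}. Since $M$ is a discrete $2$-pseudomanifold on $9$ vertices, Lemma~\ref{lem-6} gives $\deg(x)\in\{4,5,6,7\}$ for every $x\in V(M)$. Each neighbor graph is a cycle $C_k$ with $4\le k\le 7$. We split according to the maximum degree $\Delta$.

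\textbf{Case $\Delta=7$.} Let $\deg(v)=7$. Lemma~\ref{lem-4} gives $M=N_M(v)\star S^0_2$, and $N_M(v)$ is a $7$-cycle. Hence $M=C_7\star S^0_2$.

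\textbf{Case $\Delta=6$.} Let $\deg(v)=6$, so $N_M(v)=C_6$ and there are two vertices $u,w$ outside $\{v\}\cup V(N_M(v))$. By Lemma~\ref{adjacency of vertices}, $uw\in E(M)$. By Lemma~\ref{lem-3}, $N_M(uw)$ is a discrete $0$-pseudomanifold, i.e.\ two non-adjacent vertices $p,q$; these lie on the hexagon $N_M(v)$. Each of $N_M(u)$ and $N_M(w)$ is a cycle through $p,w,q$ (respectively $p,u,q$). The remaining vertices of each cycle are a path in the hexagon from $p$ to $q$, and these paths must be induced in $M$. Every hexagon vertex $x$ must be adjacent to $u$ or $w$; otherwise $N_M(x)\subseteq\{v\}\cup C_6$ would force $N_M(x)$ to be a cone or a non-cycle. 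The two paths from $p$ to $q$ must also split the hexagon. So $\{p,q\}$ is either an antipodal pair, giving two paths of length $3$, or at distance $2$, giving paths of lengths $2$ and $4$. In the distance-$2$ case, the short side yields $N_M(u)=C_4$ and the long side yields $N_M(w)=C_6$, which is excluded only if it creates degree $7$. This degree check is routine: $w$ would have $5$ hexagon neighbors plus $u$, i.e.\ degree $6$, which is allowed. I expect these two configurations to be $W^2_{9a}$ and $W^2_{9b}$, matching the pictures in which the apex $v$ has degree $6$. We then check that every vertex neighborhood is a cycle of length at least $4$; in particular the hexagon vertices $p,q$ get degree $5$ and the others degree $4$.

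\textbf{Case $\Delta=5$.} No vertex has degree $7$ or $6$, and some $v$ has $N_M(v)=C_5$. There are then three outside vertices $u_1,u_2,u_3$, and by Lemma~\ref{adjacency of vertices} each is adjacent to another. If $\deg(x)=4$ for all $x$, Theorem~\ref{prop-4} gives a contradiction, so this is the only remaining case. Here I would count edges: every edge lies in exactly two triangles (the link of an edge is $S^0$), and every vertex link is a $4$- or $5$-cycle. Since $M$ is then a flag triangulated closed surface, Euler's formula combined with $f_1=3f_2/2$ should restrict the degree sequence. Each $u_i$ must also avoid having its neighborhood inside $N_M(v)$. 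Among the three outside vertices we have either a triangle $u_1u_2u_3$ or a path. A triangle is impossible because $M$ is flag and a triangle must have a $0$-dimensional link lying inside $C_5$ consistently, which needs checking. A path $u_1u_2u_3$ forces $N_M(u_2)$ to contain $u_1,u_3$ together with the common neighbors in the pentagon. Pinning down the unique resulting graph, which I expect to be $W^2_{9c}$ (the one with the central triangle), is the main obstacle. I would handle it by fixing $N_M(u_1u_2)=\{p,q\}\subset C_5$ and enumerating the few placements of $p,q$ on the pentagon, discarding those that produce a vertex whose neighborhood is not an induced cycle of length at least $4$.

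Finally, we check that the four listed graphs are pairwise non-isomorphic, using degree sequences and the presence of degree-$6$ or degree-$7$ vertices, and that each is indeed a discrete $2$-pseudomanifold.
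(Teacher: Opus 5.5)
Your cases of maximum degree $7$ and $6$ are fine. The arc argument in the degree-$6$ case is actually more explicit than the paper's one-line ``$\deg(u)+\deg(w)=10$'': the common neighbors $p,q$ of $u,w$ cut the hexagon into two complementary arcs, one for $u$ and one for $w$, with $p,q$ either antipodal or at distance $2$.

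The genuine problem is the case of maximum degree $5$, where you have the dichotomy reversed. The triangle $u_1u_2u_3$ is not impossible. It is the only possibility, and it is exactly what produces $W^2_{9c}$: in that graph the three non-neighbors of every degree-$5$ vertex span a triangle, which is the ``central triangle'' you refer to. Your reason for excluding it does not hold up. A triangle is a facet of the flag complex and has empty common neighborhood, while the edge links $N_M(u_iu_j)=\{u_k,x_{ij}\}$, with $x_{ij}$ on the pentagon, are perfectly consistent. Conversely, the path case is the one that cannot occur, so enumerating placements of $p,q$ there yields nothing. As planned, your argument would discard $W^2_{9c}$ and miss it entirely.

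What is needed is the paper's short degree count using $\deg\le 5$.

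\emph{Path case.} Suppose $u_1u_2,u_2u_3\in E(M)$ and $u_1u_3\notin E(M)$. Then $N_M(u_1u_2)$ and $N_M(u_2u_3)$ are both pairs of pentagon vertices. If they share a vertex, it is adjacent to $v$, its two pentagon neighbors and $u_1,u_2,u_3$. If they are disjoint, $u_2$ has four pentagon neighbors plus $u_1,u_3$. Either way some degree is at least $6$, a contradiction.

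\emph{Triangle case.} The vertices $x_{12},x_{13},x_{23}$ are distinct, since a repeated one would make some $N_M(u_iu_j)$ contain two adjacent vertices. Each $u_i$ is adjacent exactly to the pentagon arc between its two $x$'s that avoids the third. Hence the arc lengths satisfy $\ell_1+\ell_2+\ell_3=5$ with $\deg(u_i)=\ell_i+3\le 5$. This forces $(1,2,2)$ and hence $W^2_{9c}$.

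Your Euler-characteristic remark at best yields $f_1=21$ and the degree sequence $5^6 4^3$, which does not by itself pin down the graph.
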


\begin{proof}
Let $M$ be a discrete $2$-pseudomanifold. By Lemma~\ref{lem-6}, $\deg(v)\in\{4,5,6,7\}$ for all $v\in V(M)$. 

\noindent \textbf{Case 1:} Suppose there exists $v\in V(M)$ with $\deg(v)=7$. Then, by Lemma~\ref{lem-4}, $M= C_7*S^0_2$.

\noindent \textbf{Case 2:} Suppose $\deg(x)\leq6$ for all $x\in V(M)$ and let $\deg(v)=6$. Then
$N_M(v)= C_6$. Let $u,w\notin V(N_M(v))$ be the remaining vertices.
By Lemma~\ref{adjacency of vertices}, $uw\in E(M)$. Since
$N_M(uw)= S^0_2$, exactly two vertices of $C_6$ are adjacent to both
$u$ and $w$. Hence $\deg(u)+\deg(w)=10$, and this determines two possible
structures for $M$, namely $W^2_{9a}$ and $W^2_{9b}$.

\noindent  \textbf{Case 3:} 
Suppose $\deg(x)\leq5$ for all $x\in V(M)$ and let $\deg(v)=5$. Then
$N_M(v)= C_5$. Let $u_1,u_2,u_3\in V(M)\setminus V(N_M(v))$ be the
remaining vertices. By Lemma~\ref{adjacency of vertices}, at least two of
$u_1u_2,u_2u_3,u_1u_3$ are edges. We claim that all three are edges.
Otherwise, assume $u_1u_2,u_2u_3\in E(M)$ but $u_1u_3\notin E(M)$. Then
$V(N_M(u_1u_2))$ and $V(N_M(u_2u_3))$ are contained in $V(C_5)$. If they
have a common vertex, then its degree is at least $6$, a contradiction.
Hence, they are disjoint, which implies $\deg(u_2)\geq6$, again a
contradiction. Therefore,
$u_1u_2,u_2u_3,u_1u_3\in E(M)$.

Consequently, two of $N_M(u_1),N_M(u_2),N_M(u_3)$ are $5$-cycles and the
remaining one is a $4$-cycle. Hence, up to isomorphism, the only possible
structure is $M= W^2_{9c}$.

\noindent   \textbf{Case 4:} Suppose that $\deg(v) = 4$ for all $v \in V(M)$. Then, by Proposition~\ref{prop-4}, $M= S^2_6$. This contradicts the assumption that $f_0(M) = 9$. Therefore, this case cannot occur.
\end{proof}

\begin{figure}[ht]
   \begin{center}

\tikzset{every picture/.style={line width=0.75pt}} 

     

\caption{Some discrete $3$-pseudomanifolds $W^3_{11a}$, $W^3_{11b}$ and $W_{12}^3$ with $11$ and $12$ vertices}
\label{fig:3}
   \end{center}
\end{figure}

\begin{theorem}\label{3 dim 11 vertex}
Let $M$ be a discrete $3$-pseudomanifold with $11$ vertices. Then, up to
isomorphism,
$M= M^{2}_{9}\star S^0_2$, where $M^{2}_{9}$ is a discrete $2$-pseudomanifold, or
$M= C_6\star C_5$, or $M= W^3_{11a}$, or
$M= W^3_{11b}$, where $W^3_{11a}$ and $W^3_{11b}$ are depicted in
Figure~\ref{fig:3}.
\end{theorem}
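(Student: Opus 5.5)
We follow the same degree-case pattern used in Theorems~\ref{dim-2,vertex-8}, \ref{3dim with 10 vertices} and~\ref{2dim 9 vertex}. Let $M$ be a discrete $3$-pseudomanifold with $11$ vertices. By Lemma~\ref{lem-6}, $\deg(v)\in\{6,7,8,9\}$ for every $v\in V(M)$. We split according to the maximum degree $\Delta$ of $M$.

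\textbf{Case $\Delta=9$.} Lemma~\ref{lem-4} gives $M=M^2_9\star S^0_2$ directly, where $M^2_9$ is one of the four graphs of Theorem~\ref{2dim 9 vertex}.

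\textbf{Case $\Delta=6$.} Then every neighbourhood is $S^2_6$. Theorem~\ref{prop-4} forces $M=S^3_8$, which is a contradiction.

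\textbf{Case $\Delta=8$.} Choose $v$ with $\deg(v)=8$. Then $N_M(v)$ is a discrete $2$-pseudomanifold on $8$ vertices, so by Theorem~\ref{dim-2,vertex-8} it is $C_6\star S^0_2$ or $W^2_8$. The remaining vertices $u,w$ are adjacent by Lemma~\ref{adjacency of vertices}. By Lemma~\ref{lem-3}, $N_M(uw)$ is a cycle $C_k$ with $k\ge 4$ inside $N_M(v)$.
\begin{itemize}
\item[(i)] Suppose $N_M(v)=C_6\star\{a,b\}$. The vertices $a,b$ have degree $6$ in $N_M(v)$, so if either lies in $N_M(uw)$ its degree in $M$ is $9$. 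Hence $N_M(uw)\subseteq C_6$, and Lemma~\ref{prop-5} gives $N_M(uw)=C_6$. As in Theorem~\ref{3dim with 10 vertices}, Corollary~\ref{lem-2} forces $a$ to be adjacent to exactly one of $u,w$, and likewise $b$. This yields $M=C_6\star C_5$, where the $5$-cycle is $(v,a,u,w,b)$.
\item[(ii)] Suppose $N_M(v)=W^2_8$. Then $N_M(uw)$ is an induced cycle of length $4$ or $5$ in $W^2_8$, namely a vertex link or another induced cycle. The structure of $M$ is determined by how the remaining vertices of $W^2_8$ are split between $u$ and $w$: each such vertex $x$ needs $N_M(x)$ to be a discrete $2$-pseudomanifold, which means the neighbourhood of $x$ in $W^2_8$, together with $v$, $u$, $w$, must close up correctly. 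Since every vertex then has degree at most $8$, enumerating the induced $4$- and $5$-cycles of $W^2_8$ and checking the links of the neighbouring vertices should leave exactly the configurations $W^3_{11a}$ and $W^3_{11b}$.
\end{itemize}

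\textbf{Case $\Delta=7$.} Every vertex of degree $7$ has $N_M(v)=C_5\star S^0_2=C_5\star\{a,b\}$ by Theorem~\ref{2d+3 vertex}. Three vertices $u_1,u_2,u_3$ lie outside $N_M(v)\cup\{v\}$. By Lemma~\ref{adjacency of vertices}, at least two of the pairs among them are adjacent. Arguing as in Case 3 of Theorem~\ref{2dim 9 vertex}:
\begin{itemize}
\item[(i)] If a vertex of $C_5$ or $a,b$ lies in two of the edge-neighbourhoods $N_M(u_iu_j)$, its degree exceeds $7$.
\item[(ii)] Each $N_M(u_iu_j)$ is a cycle of length at least $4$ inside $N_M(v)$, but avoiding $a$ and $b$ would force it to be the whole $C_5$.
\end{itemize}
These degree counts should give a contradiction, or else reduce to one of the configurations $W^3_{11a}$ or $W^3_{11b}$ already obtained. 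Which of these happens has to be checked against the figure.

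\textbf{Main obstacle.} The hardest part is Case $\Delta=8$ with $N_M(v)=W^2_8$, together with the $\Delta=7$ analysis. Both require an exhaustive but careful enumeration: the admissible cycles $N_M(uw)$, the distribution of the other vertices between $u$ and $w$, and a verification of every vertex link. Isomorphic outcomes must then be identified to obtain exactly the two exceptional graphs. I would handle this by using the symmetry of $W^2_8$, which has two special adjacent degree-$5$ vertices, to limit the choices of $N_M(uw)$. I would then check the links of $a$-type vertices via Corollary~\ref{lem-2} and Lemma~\ref{prop-5}.
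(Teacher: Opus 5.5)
Your case split by maximum degree is the paper's, and the cases of maximum degree $9$, $6$, and $8$ with $N_M(v)=C_6\star\{a,b\}$ go through as in the paper. The other two cases are only announced (``should give a contradiction'', ``has to be checked against the figure''), so the proof has a genuine gap there.

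\textbf{Maximum degree $7$.} The alternative you leave open, a reduction to $W^3_{11a}$ or $W^3_{11b}$, is impossible. Both of those graphs contain a vertex of degree $8$, namely the vertex whose link is $W^2_8$. Your item (ii) is also inaccurate: if $u_1u_2u_3$ is a triangle, then $u_3\in N_M(u_1u_2)$, so $N_M(u_1u_2)$ need not lie in $N_M(v)$. The missing argument is short:
\begin{enumerate}
\item[(1)] The vertices $a,b$ already have degree $6$ (five neighbours in $N_M(v)$, plus $v$). So neither lies in any $N_M(u_iu_j)$.
\item[(2)] Take $u_1u_2,u_2u_3\in E(M)$. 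Then $N_M(u_1u_2)\subseteq V(C_5)\cup\{u_3\}$ and $N_M(u_2u_3)\subseteq V(C_5)\cup\{u_1\}$ are induced cycles of length at least $4$. Each therefore contains at least three vertices of the $5$-cycle.
\item[(3)] Hence they share a vertex $x$ of $C_5$. This $x$ is adjacent to $v,u_1,u_2,u_3$ and to four vertices of $N_M(v)$, so $\deg(x)=8$, a contradiction.
\end{enumerate}
So this case is empty.

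\textbf{Maximum degree $8$ with $N_M(v)=W^2_8$.} Two ingredients are missing.

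First, let $C=N_M(uw)$. Every vertex of $W^2_8$ off $C$ is adjacent to exactly one of $u,w$. It is adjacent to at least one by Corollary~\ref{lem-2} applied to its link, and to at most one because it is off $C$. Two adjacent such vertices must be adjacent to the same one. Otherwise their common neighbourhood would be a path through $v$, contradicting Lemma~\ref{lem-3}. Hence $u$ and $w$ take the two sides of $C$ in $W^2_8$, and $M$ is determined by $C$.

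Second, you need the actual list of induced cycles of $W^2_8$, and your description of $W^2_8$ is wrong. Its degree sequence is $(5,5,5,5,4,4,4,4)$. The four degree-$5$ vertices span an induced $4$-cycle. An automorphism of order $4$ permutes the degree-$5$ vertices cyclically, and likewise the degree-$4$ vertices. The induced cycles of length at least $4$ are the eight vertex links together with this $4$-cycle through the degree-$5$ vertices. That $4$-cycle is \emph{not} a vertex link; it cuts off an edge on each side.

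The three types of $C$ give:
\begin{enumerate}
\item[(a)] a $5$-link, which yields $W^3_{11a}$ ($40$ edges);
\item[(b)] a $4$-link, which yields $W^3_{11b}$ ($39$ edges);
\item[(c)] the separating $4$-cycle, which yields a third graph you must still identify.
\end{enumerate}
The graph in (c) is isomorphic to $W^3_{11b}$. Each vertex of that $4$-cycle has degree $8$ in $M$ with link isomorphic to $W^2_8$, and seen from such a vertex, $M$ is the $4$-link case.

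The paper's own proof simply asserts that $N_M(uw)$ is a vertex link, and so overlooks this cycle. Your ``or another induced cycle'' is the right instinct, but it has to be resolved rather than deferred.
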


\begin{proof}
 Let $M$ be a discrete $3$-pseudomanifold. Then for all $v \in V(M)$, by Lemma~\ref{lem-6}, we have $\deg(v) \in \{6,7,8,9\}.$

\noindent \textbf{Case 1:} Suppose there exists a vertex $v \in V(M)$ such that 
$\deg(v) = 9$. Then, by Lemma~\ref{lem-4}, 
$M = M^{2}_{9} \star S^0_2.$

\noindent  \textbf{Case 2:} 
Suppose $\deg(x)\leq8$ for all $x\in V(M)$ and let
$v\in V(M)$ with $\deg(v)=8$. Then $N_M(v)$ is a discrete $2$-pseudomanifold on $8$ vertices, and by Theorem~\ref{dim-2,vertex-8},
$N_M(v)= C_6\star \{a,b\}$ or $N_M(v)= W_8^2$, where $a, b$ are some vertices. Since
$f_0(M)=11$, there exist $u,w\notin V(N_M(v))\cup\{v\}$ with
$V(M)=V(N_M(v))\cup\{u,w,v\}$. By Lemma~\ref{adjacency of vertices},
$uw\in E(M)$.

First, assume $N_M(v)= C_6\star \{a,b\}$. Neither $a$ nor $b$ can
be adjacent to both $u$ and $w$, otherwise its degree would be $9$. Hence
$N_M(uw)\subseteq C_6$. By Lemmas~\ref{prop-5} and \ref{lem-3}, $N_M(uw)= C_6$. Thus, $u$ and $w$ are adjacent
to all vertices of $C_6$. Also, $a$ (resp. $b$) must be adjacent to at
least one of $u,w$, otherwise $N_M(a)= v\star C_6$ (resp.
$N_M(b)= v\star C_6$), contradicting Lemma~\ref{lem-2}. Without loss of generality, assume $au,bw\in E(M)$. Then $C_5(v,a,u,w,b)$ is a $5$-cycle and
every vertex of $C_6$ is adjacent to all its vertices. Therefore,
$M= C_6\star C_5$.

Now assume $N_M(v)= W_8^2$. Since $uw\in E(M)$, Lemma~\ref{lem-3}
implies that $N_M(uw)= C_n$ for some $n\geq4$. Moreover,
$N_M(uw)\subseteq N_M(v)= W_8^2$. Since $W_8^2$ contains only induced
$4$- and $5$-cycles, we have $N_M(uw)= C_4$ or $C_5$.

Let $N_M(uw)$ be a 5-cycle, say $C_5$. Then this cycle is the neighbor of a vertex $p$ in
$W_8^2$. Without loss of generality, let $p$ be adjacent to $u$. Then
$N_M(u)=C_5\star\{p,w\}$. Consequently, $w$ is adjacent to the vertices of
$C_5$, to $u$, and to the remaining two vertices of
$V(N_M(v))\setminus(V(C_5)\cup\{p\})$. Hence $N_M(w)= W_8^2$, and
therefore $M= W^3_{11a}$.

Let $N_M(uw)$ be a 4-cycle, say $C_4$. Then this cycle is the neighbor of a vertex $q$ in
$W_8^2$. Taking $q$ adjacent to $u$, we get
$N_M(u)=C_4\star\{q,w\}$. In this case, $w$ is adjacent to the vertices of
$C_4$, to $u$, and to the remaining three vertices of
$V(N_M(v))\setminus(V(C_4)\cup\{q\})$. Thus $N_M(w)= W_8^2$, and
$M= W^3_{11b}$.

\noindent \textbf{Case 3: } Suppose $\deg(x)\leq7$ for all $x\in V(M)$ and let $\deg(v)=7$. Then
$N_M(v)= C_5\star\{a,b\}$ by Theorem~\ref{2d+3 vertex}. Since
$f_0(M)=11$, there exist $u_1,u_2,u_3\notin V(N_M(v))\cup\{v\}$ such that
$V(M)=\{v\}\cup V(N_M(v))\cup\{u_1,u_2,u_3\}$. By
Lemma~\ref{adjacency of vertices}, we may assume
$u_1u_2,u_2u_3\in E(M)$. Since $\deg(a),\deg(b)\leq7$, neither $a$ nor $b$
belongs to $N_M(u_1u_2)$ or $N_M(u_2u_3)$. Hence each of these links
contains at least four vertices of $C_5$, and therefore they share a
vertex $x\in C_5$. This gives $\deg(x)\geq8$, contradicting the degree
assumption. Thus, this case cannot occur.

\noindent \textbf{Case 4:} Suppose $\deg(v)=6$ for all $v\in V(M)$. By Proposition~\ref{prop-4},
$M= S_8^3$, contradicting $f_0(M)=11$. Hence, this case is impossible.
\end{proof}

\begin{theorem}\label{4dim with 13 vertices}
Let $M$ be a discrete $4$-pseudomanifold with $13$ vertices.
Then, up to isomorphism, $M= M^{3}_{11}\star S^0_2$, where $M^{3}_{11}$ is a discrete $3$-dimensional pseudomanifold, or $M= W_8^2\star C_5$.
\end{theorem}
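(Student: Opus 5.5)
By Lemma~\ref{lem-6}, every vertex $v$ of $M$ satisfies $\deg(v)\in\{8,9,10,11\}$. I will follow the case structure of Theorems~\ref{3dim with 10 vertices} and~\ref{3 dim 11 vertex}, splitting according to the maximum degree of $M$.

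\textbf{Case 1: some vertex has degree $11$.} Lemma~\ref{lem-4} gives $M=M^3_{11}\star S^0_2$ directly.

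\textbf{Case 2: maximum degree $10$.} Let $\deg(v)=10$. Then $N_M(v)$ is a discrete $3$-pseudomanifold on $10$ vertices. By Theorem~\ref{3dim with 10 vertices} it is $W_8^2\star\{a,b\}$, $C_6\star C_4$, or $C_5\star C_5$. The remaining vertices are $v$ and two others $u,w$, and Lemma~\ref{adjacency of vertices} gives $uw\in E(M)$. By Lemma~\ref{lem-3}, $N_M(uw)$ is a discrete $2$-pseudomanifold inside $N_M(v)$, so it has at least $6$ vertices. Any vertex of $N_M(v)$ adjacent to both $u$ and $w$ has degree $\deg_{N_M(v)}+3\le 10$, so its degree in $N_M(v)$ is at most $7$.

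\begin{itemize}
\item If $N_M(v)=C_6\star C_4$, every vertex has degree $8$ or $9$ in $N_M(v)$, so no vertex can lie in $N_M(uw)$. This is a contradiction.
\item If $N_M(v)=C_5\star C_5$, every degree is $8$, again a contradiction.
\item If $N_M(v)=W_8^2\star\{a,b\}$, the vertices $a,b$ have degree $8$ and are excluded. Vertices of $W_8^2$ have degree $6$ or $7$ in $N_M(v)$, so $N_M(uw)\subseteq W_8^2$. Lemma~\ref{prop-5} then forces $N_M(uw)=W_8^2$, so $u$ and $w$ are adjacent to all of $W_8^2$.
\end{itemize}

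In the last subcase, $a$ must be adjacent to at least one of $u,w$. Otherwise $N_M(a)=v\star W_8^2$ is a cone, contradicting Corollary~\ref{lem-2}. The same holds for $b$, and neither can be adjacent to both $u$ and $w$, since that would give degree $11$. So, up to symmetry, $au,bw\in E(M)$ and $ab,aw,bu\notin E(M)$. The vertices $v,a,u,w,b$ then induce a $5$-cycle, every vertex of which is joined to all of $W_8^2$. Hence $M=W_8^2\star C_5$.

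\textbf{Case 3: maximum degree $9$.} Let $\deg(v)=9$. Then $N_M(v)=C_5\star S^1$ with $S^1=C_4=\{a,a'\}\star\{b,b'\}$, by Theorem~\ref{2d+3 vertex}. There are three remaining vertices $u_1,u_2,u_3$, and Lemma~\ref{adjacency of vertices} gives, say, $u_1u_2,u_2u_3\in E(M)$.

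\begin{itemize}
\item A vertex of the $C_4$ factor has degree $7$ in $N_M(v)$. Counting $v$, its degree in $M$ is at least $8$, and at most $9$ by assumption, so it is adjacent to at most one of the $u_i$.
\item In particular, such a vertex cannot lie in $N_M(u_1u_2)$ or in $N_M(u_2u_3)$. Those two links therefore lie in $C_5$.
\item However, each link is a discrete $2$-pseudomanifold by Lemma~\ref{lem-3}, so it has at least $6$ vertices and cannot fit inside $C_5$. This is a contradiction.
\end{itemize}

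\textbf{Case 4: maximum degree $8$.} Every link is then an octahedral $3$-sphere, so Theorem~\ref{prop-4} gives $M=S^4_{10}$, contradicting $f_0(M)=13$.

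The step I expect to need the most care is the degree bookkeeping in Case 2. There I must read off, from Figure~\ref{fig:1}, that the vertex degrees of $W_8^2$ are at most $5$. I must also confirm that in $C_6\star C_4$ and $C_5\star C_5$ every vertex has degree at least $8$ within $N_M(v)$. Both checks follow from the join structure. The concluding identification with $W_8^2\star C_5$ then mirrors the argument of Theorem~\ref{3 dim 11 vertex}.
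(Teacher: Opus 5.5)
\textbf{The gap is in Case~2: you rule out the $C_5\star C_5$ subcase with a wrong degree count, but that subcase actually occurs.} The check you flagged at the end fails. In $C_5\star C_5$ a vertex is adjacent to its two neighbours on its own pentagon and to all five vertices of the other pentagon, so its degree in $N_M(v)$ is $7$, not $8$. It may therefore be adjacent to both $u$ and $w$, ending with degree $10$, and no contradiction arises.

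Write $N_M(v)=C_5\star C_5(a,b,c,d,e)$. Then $N_M(uw)$ is an induced discrete $2$-pseudomanifold of $C_5\star C_5$. Either a short link computation or the paper's route gives, up to swapping the two pentagons, $N_M(uw)=C_5\star\{a,c\}$ with $ac\notin E(M)$. The paper's route is to bound $f_0(N_M(uw))\le 8$ using $\deg(u)\le 10$, and then check which $2$-pseudomanifolds on at most $8$ vertices embed. Each of $b,d,e$ is then adjacent to exactly one of $u,w$:
\begin{itemize}
\item not to both, since it is not in $N_M(uw)$;
\item not to neither, since its link would then be a cone over $v$, contradicting Corollary~\ref{lem-2}.
\end{itemize}
Let $S_u\subseteq\{b,d,e\}$ be the neighbours of $u$ among them. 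Then $N_M(u)=C_5\star M[\{w,a,c\}\cup S_u]$. This is a $3$-pseudomanifold only if the second factor is a cycle, which forces $\{S_u,S_w\}=\{\{b\},\{d,e\}\}$. The graph induced on $\{v,a,b,c,d,e,u,w\}$ is then $W_8^2$, and $M=C_5\star W_8^2$. So the statement is still true, but your argument discards a case that has to be analysed. The paper does this analysis and arrives at $C_5\star W_8^2$ again.

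\textbf{The same miscount appears for $C_6\star C_4$.} The vertices of $C_6$ have degree $6$ in $N_M(v)$; only those of $C_4$ have degree $8$. The contradiction survives, but for a different reason. Only the $C_4$ vertices are excluded from $N_M(uw)$, so $N_M(uw)\subseteq C_6$, which cannot contain a discrete $2$-pseudomanifold (Lemma~\ref{prop-5}).

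\textbf{Case~3 has a smaller omission.} If also $u_1u_3\in E(M)$, then $u_3\in N_M(u_1u_2)$, so this link lies in $V(C_5)\cup\{u_3\}$ rather than in $C_5$. One more line closes it. A discrete $2$-pseudomanifold on these six vertices would be $S^2_6$ by Corollary~\ref{2d+2 vertex}, in which every vertex has degree $4$. But a vertex of the induced $C_5$ has at most three neighbours in $V(C_5)\cup\{u_3\}$. The same applies to $N_M(u_2u_3)$ with $u_1$.

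Cases~1 and~4 and the $W_8^2\star\{a,b\}$ subcase are correct as written and match the paper.
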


\begin{proof}
Let $M$ be a discrete $4$-pseudomanifold. By Lemma~\ref{lem-6},
$\deg(v)\in\{8,9,10,11\}$ for all $v\in V(M)$.

\noindent \textbf{Case 1:} Suppose there exists $v\in V(M)$ with $\deg(v)=11$. Then,
by Lemma~\ref{lem-4},
$M= M^{3}_{11}\star S^0_2$.

\noindent \textbf{Case 2:} Suppose $\deg(x)\leq10$ for all $x\in V(M)$ and let
$\deg(v)=10$. Then $N_M(v)$ is a $3$-dimensional discrete
pseudomanifold on $10$ vertices. By Theorem~\ref{3dim with 10 vertices},
$
N_M(v)= W_8^2\star S_2^0$ or $C_6\star C_4$ or $ C_5\star C_5 .
$
Since $f_0(M)=13$, there exist $u,w\notin V(N_M(v))\cup\{v\}$ with
$V(M)=V(N_M(v))\cup\{u,v,w\}$. By Lemma~\ref{adjacency of vertices},
$uw\in E(M)$.

First, assume $N_M(v)= C_6\star C_4$. For every $x\in V(C_4)$,
$\deg_{N_M(v)}(x)=8$, and $xv\in E(M)$. Hence $x$
cannot be adjacent to both $u$ and $w$, implying
$N_M(uw)\subseteq C_6$, which is impossible by Lemma \ref{prop-5}. Therefore,
$N_M(v)\neq C_6\star C_4$.

Now suppose $N_M(v)= C_5\star C_5$. Since $\deg(u)\leq10$ and
$\deg(uw)\leq8$, the possible links of $uw$ are
$C_4\star S_2^0$, $C_5\star S_2^0$, $C_6\star S_2^0$, or $W_8^2$. Only
$C_5\star S_2^0$ can occur as a subcomplex of $C_5\star C_5$. Hence
$N_M(uw)= C_5\star S_2^0$. Let $N_M(v)=C_5\star C_5(a,b,c,d,e)$. Then, up to isomorphism,
$N_M(uw)=C_5\star \{a, c\}$. Consequently, up to isomorphism,
\[
M=
v\star(C_5\star C_5(a,b,c,d,e))
\cup
u\star(C_5\star\{ab,bc\})
\cup
w\star(C_5\star\{cd,de,ae\})\cup\{uw\}.
\]
This graph is isomorphic to $C_5\star W_8^2$.

Finally, let $N_M(v)= W_8^2\star \{a, b\}$. If $a$ or $b$ is
adjacent to both $u$ and $w$, then its degree is at least $11$, a
contradiction. Hence $N_M(uw)\subseteq W_8^2$. Since $N_M(uw)$ is a
$2$-dimensional discrete pseudomanifold, $N_M(uw)= W_8^2$.

Moreover, $u$ (respectively, $w$) must be adjacent to at least one of $a,b$; otherwise,
$N_M(u)= w\star W_8^2$, contradicting Corollary \ref{lem-2}.
Thus, without loss of generality, assume $au,bw\in E(M)$. Therefore every vertex of
$W_8^2$ is adjacent to $v,a,u,w,b$, and hence
$M= W_8^2\star C_5$, where $C_5=C_5(v,a,u,w,b)$.

\noindent \textbf{Case 3:} Suppose $\deg(x)\leq9$ for all $x\in V(M)$ and let $\deg(v)=9$. Since
$f_0(M)=13$, there exist $u_1,u_2,u_3\notin V(N_M(v))\cup\{v\}$ with
$V(M)=V(N_M(v))\cup\{v,u_1,u_2,u_3\}$. By
Lemma~\ref{adjacency of vertices}, we may assume
$u_1u_2,u_2u_3\in E(M)$.

As $\deg(v)=9$, by Theorem~\ref{2d+3 vertex},
$N_M(v)= C_5\star S^1_4$. For every $x\in V(S^1_4)$,
$\deg_{N_M(v)}(x)=7$, and since $xv\in E(M)$, $x$ can be adjacent to at
most one of $u_1,u_2,u_3$.

If $f_0(N_M(u_1u_2))=6$ and $N_M(u_1u_2)$ does not contain any vertex of $V(S^1_4)$, then $N_M(u_1u_2)$ contains a $C_5$. However, this is not possible, since $f_0(N_M(u_1u_2))=6$ implies that $N_M(u_1u_2)$ is an octahedral $2$-sphere. Hence, there exists some $x\in V(S^1_4)$ such that $x\in N_M(u_1u_2)$. On the other hand, if $f_0(N_M(u_1u_2))\geq 7$, then again some $x\in V(S^1_4)$ belongs to $N_M(u_1u_2)$. This implies that $x$ is adjacent to both $u_1$ and $u_2$, which is a contradiction. Therefore, this case cannot occur.
 
\noindent \textbf{Case 4:} If $\deg(v)=8$ for all $v\in V(M)$, then by Proposition~\ref{prop-4}, $M= S^4_{10}$, contradicting $f_0(M)=13$; hence this case is impossible.
\end{proof}

\begin{theorem}\label{2d+5,d>=5}
Let $M$ be a discrete $d$-pseudomanifold with $2d+5$ vertices, where
$d\geq5$. Then, up to isomorphism,
$M= M^{d-1}_{2d+3}\star S^0_2$, or,
when $d=5$, the additional possibility
$M= C_5\star C_5\star C_5$ occurs.
\end{theorem}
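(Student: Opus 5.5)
We follow the degree case analysis used in Theorems~\ref{2d+4,d>=4} and~\ref{4dim with 13 vertices}. By Lemma~\ref{lem-6}, every vertex satisfies $\deg(v)\in\{2d,2d+1,2d+2,2d+3\}$.

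\textbf{Case 1: some vertex has degree $2d+3$.} Lemma~\ref{lem-4} immediately gives $M=M^{d-1}_{2d+3}\star S^0_2$.

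\textbf{Case 4: every vertex has degree $2d$.} Theorem~\ref{prop-4} forces $M=S^d_{2d+2}$, which contradicts $f_0(M)=2d+5$.

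\textbf{Case 3: maximum degree $2d+1$.} Take $v$ of degree $2d+1$. By Theorem~\ref{2d+3 vertex}, $N_M(v)=C_5\star S^{d-3}_{2d-4}$, and there are three further vertices $u_1,u_2,u_3$. By Lemma~\ref{adjacency of vertices} we may assume $u_1u_2\in E(M)$.
\begin{itemize}
\item Each $x\in V(S^{d-3}_{2d-4})$ has $\deg_{N_M(v)}(x)=2d-1$. Together with $v$, the bound $\deg(x)\le 2d+1$ means $x$ is adjacent to at most one $u_i$.
\item Hence $N_M(u_1u_2)\subseteq C_5$.
\item But $N_M(u_1u_2)$ is a discrete $(d-2)$-pseudomanifold with $d-2\ge 3$ (Lemma~\ref{lem-3}), so it has at least $2(d-1)\ge 8>5$ vertices. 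This is a contradiction.
\end{itemize}

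\textbf{Case 2: maximum degree $2d+2$.} Take $v$ with $\deg(v)=2d+2$, and let $u,w$ be the two remaining vertices. Lemma~\ref{adjacency of vertices} gives $uw\in E(M)$. Since $d-1\ge 4$, Theorem~\ref{2d+4,d>=4} gives $N_M(v)=K\star\{a,b\}$, where $K=M^{d-2}_{2d}$ is a discrete $(d-2)$-pseudomanifold on $2(d-2)+4$ vertices.
\begin{itemize}
\item Each of $a,b$ already has degree $2d+1$ counting $v$, so neither lies in $N_M(uw)$.
\item Therefore $N_M(uw)\subseteq K$. Both are discrete $(d-2)$-pseudomanifolds (Lemma~\ref{lem-3}), so Lemma~\ref{prop-5} gives $N_M(uw)=K$.
\item By Corollary~\ref{lem-2}, each of $a,b$ is adjacent to at least one of $u,w$. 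Since neither is adjacent to both, we may take $au,bw\in E(M)$.
\item Then every vertex of $K$ is adjacent to all of the $5$-cycle $(v,a,u,w,b)$, so $M=K\star C_5$.
\end{itemize}

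\textbf{Identifying $K$.} $K$ is a discrete $(d-2)$-pseudomanifold on $2(d-2)+4$ vertices, and we must reconcile $M=K\star C_5$ with the statement.
\begin{itemize}
\item \emph{$d\ge 6$.} Here $d-2\ge 4$, so Theorem~\ref{2d+4,d>=4} gives $K=K'\star S^0_2$. Then $M=(K'\star C_5)\star S^0_2$ is a suspension, which is of the first form. (Alternatively, a suspension vertex of $K$ gets degree $2d+3$ in $M$, contradicting the case hypothesis; either way we land in the first form.)
\item \emph{$d=5$.} Here $K$ is a discrete $3$-pseudomanifold on $10$ vertices, so by Theorem~\ref{3dim with 10 vertices} it is $W_8^2\star S^0_2$, $C_6\star C_4$, or $C_5\star C_5$.
  \begin{itemize}
  \item The first two joined with $C_5$ are suspensions ($C_4=S^0_2\star S^0_2$), hence of the first form.
  \item $K=C_5\star C_5$ gives $M=C_5\star C_5\star C_5$. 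This has all degrees $12=2d+2$ and is not a suspension, so it is the genuinely new example.
  \end{itemize}
\end{itemize}

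\textbf{Converse.} We check that $C_5\star C_5\star C_5$ is a discrete $5$-pseudomanifold on $15=2\cdot5+5$ vertices. This holds because vertex neighborhoods in a join are joins of neighborhoods: each link is $S^0_2\star C_5\star C_5$, which is a discrete $4$-pseudomanifold.

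\textbf{Main obstacle.} The delicate step is Case~2. We must correctly invoke Theorem~\ref{2d+4,d>=4} at dimension $d-1\ge4$ to get the suspension form of $N_M(v)$. We must then carefully sort out which joins $K\star C_5$ reduce to suspensions; in particular, for $d=5$ we rely on the complete list of Theorem~\ref{3dim with 10 vertices}.
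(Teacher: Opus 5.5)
Your proof is correct and is essentially the paper's argument. It uses the same split by maximum degree, and in the degree-$2d+2$ case the same mechanism: the degree bound keeps the suspension vertices out of $N_M(uw)$, Lemma~\ref{prop-5} forces equality, and Corollary~\ref{lem-2} attaches $a,b$. The only difference is bookkeeping. The paper peels off all of $S^{d-5}_{2d-8}$ at once to get $d=5$, then rules out $W_8^2\star S^0_2$ and $C_6\star C_4$ by degree. You first obtain $M=K\star C_5$ and observe those outcomes are suspensions, which amounts to the same thing.

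Two small repairs are needed. In Case 3 the correct containment is $N_M(u_1u_2)\subseteq C_5\cup\{u_3\}$, not $C_5$; this still gives at most $6<8$ vertices, so the contradiction survives. In Case 2, choosing $au,bw\in E(M)$ also needs that $a$ and $b$ cannot both attach to the same vertex, since then $N_M(w)=K\star\{u\}$ (or $N_M(u)=K\star\{w\}$) would be a cone.
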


\begin{proof}
Let $M$ be a discrete  $d$-pseudomanifold with $d \geq 5$ and
$f_0(M)=2d+5$. By Lemma~\ref{lem-6}, the degree of any vertex
$v\in V(M)$ satisfies $\deg(v)\in\{2d+3,\,2d+2,\,2d+1,\,2d\}$.

\noindent \textbf{Case 1:} Suppose that there exists a vertex $v\in V(M)$ such that $\deg(v)=2d+3$. Then, by Lemma~\ref{lem-4},
$M= M^{d-1}_{2d+3}\star S^0_2$.

\noindent \textbf{Case 2:} Suppose that $\deg(v)\leq 2d+2$ for all $v\in V(M)$, and that there exists a
vertex $v\in V(M)$ with $\deg(v)=2d+2$. Then there exist vertices
$u,w\in V(M)\setminus V(N_M(v))$ such that
$\{v\}\cup\{u,w\}\cup V(N_M(v))=V(M)$. By Lemma~\ref{adjacency of vertices},
we have $uw\in E(M)$.

Since $N_M(v)$ is a discrete $(d-1)$-pseudomanifold with $2d+2$ vertices,
Theorem~\ref{2d+4,d>=4} gives
$N_M(v)= M^3_{10}\star S^{d-5}_{2d-8}$,
where $M^3_{10}$ is a discrete $3$-pseudomanifold with $10$ vertices.

For $x\in S^{d-5}_{2d-8}$, we have
$\deg_{N_M(v)}(x)=2d$, and since $xv\in E(M)$, $x$ cannot be adjacent to
both $u$ and $w$, otherwise $\deg(x)=2d+3$, a contradiction. Hence no vertex
of $S^{d-5}_{2d-8}$ lies in $N_M(uw)$. Therefore,
$N_M(uw)\subseteq M^3_{10}$. This implies $d=5$, and by Lemma~\ref{prop-5},
$N_M(uw)=M^3_{10}$. By Theorem~\ref{3dim with 10 vertices},
$
M_{10}^3= W_8^2\star S_2^0,
C_6\star C_4, \text{ or } 
C_5\star C_5.
$

If $M_{10}^3= C_6\star C_4$, then
$N_M(v)= C_6\star C_4\star S^0_2$. Hence, for any
$x\in V(C_4)$, $\deg_{N_M(v)}(x)=10$. Since
$xv,xu,xw\in E(M)$, we obtain $\deg(x)\geq 13$, a contradiction.

If $M_{10}^3= W_8^2\star S_2^0$, then
$N_M(v)= W_8^2\star S_2^0\star S_2^0
= W_8^2\star S_4^1$.
Let $x\in V(S_4^1)$. Then $\deg_{N_M(v)}(x)=10$, and since $xv\in E(M)$,
$x$ cannot be adjacent to both $u$ and $w$, otherwise
$\deg(x)\geq13$, a contradiction. Hence
$N_M(uw)\subseteq W_8^2$, contradicting Lemma~\ref{prop-5}.

Therefore, $M_{10}^3= C_5\star C_5$. Let
$N_M(v)=C_5\star C_5\star\{a,b\}$. If $a$ (respectively, $b$) is not
adjacent to both $u$ and $w$, then
$N_M(a)=v\star C_5\star C_5$ (respectively,
$N_M(b)=v\star C_5\star C_5$), which contradicts Corollary~\ref{lem-2}.
Without loss of generality, assume that $au,bw\in E(M)$.
Since
$N_M(uw)= C_5\star C_5$ and
$N_M(v)= C_5\star C_5\star S_2^0$,
it follows that
$M= C_5\star C_5\star C_5$,
where one of the $5$-cycles is $C_5(v,a,u,w,b)$.

\noindent \textbf{Case 3:} Suppose that $\deg(v)\leq 2d+1$ for all $v\in V(M)$ and that there exists $v\in V(M)$ with $\deg(v)=2d+1$. Then $N_M(v)$ is a discrete $(d-1)$-pseudomanifold on $2d+1$ vertices, and by Theorem~\ref{2d+3 vertex}, we have $N_M(v)= C_5\star S^{d-3}_{2d-4}$. Choose distinct vertices $u_1,u_2,u_3\in V(M)\setminus (V(N_M(v))\cup\{v\})$ such that $V(M)=\{v\}\cup V(N_M(v))\cup\{u_1,u_2,u_3\}$, and by Lemma~\ref{adjacency of vertices}, assume $u_1u_2,u_2u_3\in E(M)$. For $x\in S^{d-3}_{2d-4}$, we have $\deg_{N_M(v)}(x)=2d-1$ and $xv\in E(M)$, so $x$ is adjacent to at most one of $u_1,u_2,u_3$, implying $f_0(N_M(u_1u_2))\leq 6$. However, since $d\geq5$, we get $f_0(N_M(u_1u_2))\geq8$, a contradiction; hence this case cannot occur.

\noindent \textbf{Case 4:} If $\deg(v)=2d$ for all $v\in V(M)$, then by Proposition~\ref{prop-4}, $M= S^{d}_{2d+2}$, contradicting $f_0(M)=2d+5$; hence this case cannot occur.
\end{proof}

\subsection{Discrete $d$-pseudomanifold with $2d+6$ vertices}
Let $M$ be a discrete $d$-pseudomanifold with $2d+6$ vertices. By the definition of a discrete $d$-pseudomanifold, when $d=1$ and
$f_0(M)=8$, we have $M= C_8$. For $d\geq2$, we have the following results.

\begin{figure}[ht]
   \begin{center}

\tikzset{every picture/.style={line width=0.75pt}} 


\caption{Some discrete $2$-pseudomanifolds on $10$ vertices.}
\label{fig:2}
   \end{center}
\end{figure}

\begin{theorem}\label{2dim 10 vertex}
Let $M$ be a discrete  $2$-pseudomanifold with $10$ vertices.
Then, up to isomorphism,
\[
M= C_8\star S_2^0,
 W^2_{10a}, W^2_{10b}, W^2_{10c}, W^2_{10d}, W^2_{10e}, W^2_{10f}, W^2_{10g}, W^2_{10h}, \text{ or } W^2_{10i}.
\]
where  $W^2_{10a}, W^2_{10b}, W^2_{10c}, W^2_{10d}, W^2_{10e}, W^2_{10f}, W^2_{10g}, W^2_{10h},$ and $W^2_{10i} $ are the graphs depicted
in Figure~\ref{fig:2}.
\end{theorem}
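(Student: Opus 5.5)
We follow the same degree-based case analysis used for Theorems~\ref{dim-2,vertex-8} and~\ref{2dim 9 vertex}. By Lemma~\ref{lem-6}, every vertex of $M$ has degree in $\{4,5,6,7,8\}$. We split into cases according to the maximum degree $\Delta$ of $M$. In each case we fix a vertex $v$ with $\deg(v)=\Delta$, so that $N_M(v)=C_\Delta$. We then analyse how the remaining $9-\Delta$ vertices outside $\{v\}\cup V(N_M(v))$ attach to the cycle.

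\textbf{Case $\Delta=8$.} Lemma~\ref{lem-4} gives $M=C_8\star S^0_2$ directly.

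\textbf{Case $\Delta=7$.} Exactly two vertices $u,w$ lie outside, and $uw\in E(M)$ by Lemma~\ref{adjacency of vertices}. By Lemma~\ref{lem-3}, $N_M(uw)=S^0_2$, i.e.\ two non-adjacent vertices of $C_7$. Then $N_M(u)$ and $N_M(w)$ are cycles that together cover $C_7$ and meet in exactly these two vertices. Consequently the two arcs of $C_7$ have lengths summing to $7$, each of length at least $2$ since $\deg\ge 4$. Up to symmetry this gives a short list of graphs, in analogy with $W^2_{9a},W^2_{9b}$; these will be the first few $W^2_{10\ast}$.

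\textbf{Case $\Delta=6$.} There are three outside vertices $u_1,u_2,u_3$. By Lemma~\ref{adjacency of vertices} they induce either a path or a triangle. For each edge $u_iu_j$, the link $N_M(u_iu_j)$ consists of two non-adjacent vertices. These vertices lie either in $C_6$ or are the third $u_k$ together with a vertex of $C_6$. We then use $\deg\le 6$ everywhere, and the requirement that every $N_M(x)$ for $x\in C_6$ be an induced cycle of length at least $4$, to pin down the possible attachments.

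\textbf{Case $\Delta=5$.} There are four outside vertices $u_1,\dots,u_4$, and $N_M(v)=C_5$. Again we first determine the induced graph on $\{u_1,\dots,u_4\}$, using Lemma~\ref{adjacency of vertices} and the degree bound $\le 5$ as in Case~3 of Theorem~\ref{2dim 9 vertex}. Each $u_i$ has degree $4$ or $5$, and each vertex of $C_5$ has exactly two neighbours outside $C_5\cup\{v\}$.

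\textbf{Case $\Delta=4$.} Theorem~\ref{prop-4} forces $M=S^2_6$, which is impossible on $10$ vertices.

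The main obstacle will be the cases $\Delta=6$ and $\Delta=5$, where several sub-configurations must be enumerated and then identified up to isomorphism, so that exactly the nine graphs $W^2_{10a},\dots,W^2_{10i}$ remain. To keep this manageable, we plan to use a double-counting constraint. Since every vertex link is a cycle, $M$ is the $1$-skeleton of a flag closed surface-like complex with $f_1=\tfrac12\sum\deg(x)$. For $d=2$ a vertex link is a single cycle, so every edge lies in exactly two triangles, and hence $2f_1=3f_2$. Counting $\sum_x(\deg(x)-6)$ then restricts the degree sequences for each $\Delta$. We list the admissible degree sequences first, and realise each one by extending $C_\Delta$ greedily. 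Distinct outcomes are separated by degree sequence and by the structure of their degree-$\Delta$ vertices. A final check confirms that each listed graph is indeed a discrete $2$-pseudomanifold, i.e.\ that all its links are induced cycles of length at least $4$.
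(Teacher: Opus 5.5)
The overall route is the paper's: split by the maximum degree $\Delta$, use Lemma~\ref{lem-4} for $\Delta=8$, Lemma~\ref{adjacency of vertices} for the edges among the outside vertices, and Theorem~\ref{prop-4} for $\Delta=4$. Your $\Delta=7$ analysis is correct, with arc splits $2+5$ and $3+4$ giving the two graphs $W^2_{10a},W^2_{10b}$. The genuine gap is in the $\Delta=5$ case. There you assert that every vertex of $N_M(v)=C_5$ has exactly two neighbours outside $C_5\cup\{v\}$, i.e.\ that all of $N_M(v)$ has degree $5$.

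This is false, and in fact it can never hold when $f_0(M)=10$ and $\Delta=5$. Write $N_M(v)=C_5(x_1,\dots,x_5)$, and let $w_i$ be the common neighbour of $x_i,x_{i+1}$ other than $v$. If $\deg(x_i)=5$ for all $i$, then $N_M(x_i)=C_5(v,x_{i+1},w_i,w_{i-1},x_{i-1})$, so $w_{i-1}\neq w_i$. Moreover, $w_i=w_{i+2}$ would put the chord $w_ix_{i+2}$ into the induced cycle $N_M(x_{i+1})$. Hence $w_1,\dots,w_5$ would be five distinct vertices, but there are only four outside vertices.

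Concretely, take the square antiprism with both square faces coned off: two vertices of degree $4$ and eight of degree $5$. This is $W^2_{10i}$, the only entry of the list with $\Delta=5$. In it, every degree-$5$ vertex is adjacent to a degree-$4$ vertex, and that vertex has only one neighbour outside $N_M(v)\cup\{v\}$. Executed as written, your $\Delta=5$ case comes out empty and $W^2_{10i}$ is lost. You must allow $\deg(x)\in\{4,5\}$ for $x\in C_5$; the argument above even shows that at least one vertex of $C_5$ has degree $4$.

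Second, the double count cannot be used the way you suggest. From $2f_1=3f_2$ you get $\sum_x(\deg(x)-6)=-6\chi$, where $\chi$ is the Euler characteristic of the clique complex. That this complex is a sphere is a consequence of the classification (Theorem~\ref{thm:flagclassificiation}). It is not something you may assume when listing ``admissible degree sequences''.

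A priori you only know $\chi\le 2$ and $6\mid\sum_x\deg(x)$. For $\Delta=5$ this does force $\chi=2$ and the degree sequence $(4,4,5,\dots,5)$, since $\sum_x\deg(x)\le 50$. For $\Delta=6$, however, it leaves $\chi\in\{0,1\}$ open, for example a $6$-regular torus or Klein bottle on $10$ vertices. Those possibilities have to be ruled out by your local analysis of how $u_1,u_2,u_3$ attach to $C_6$. They cannot be dropped from a degree-sequence list built on $\chi=2$.
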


\begin{proof}
Let $M$ be a discrete $2$-pseudomanifold. By Lemma~\ref{lem-6}, $\deg(v)\in\{4, 5, 6, 7, 8\}$ for all $v\in V(M)$. 

\noindent \textbf{Case 1:} Suppose there exists $v \in V(M)$ with $\deg(v)=8$. Then, by Lemma~\ref{lem-4}, $M = C_8 * S_2^0$.

\noindent \textbf{Case 2:} Suppose that $\deg(x)\leq 7$ for all $x \in V(M)$ and let $\deg(v)=7$. Then $N_M(v)= C_7$. Let $u_1, u_2 \notin V(N_M(v))$ be the remaining vertices. By Lemma~\ref{adjacency of vertices}, we have $u_1u_2 \in E(M)$. It is now straightforward to verify that the only possible structures for $M$ are $W^2_{10a}$ and $W^2_{10b}$.

\noindent  \textbf{Case 3:} 
Suppose that $\deg(x)\leq 6$ for all $x \in V(M)$ and let $\deg(v)=6$. Then $N_M(v)= C_6$. Let $u_1,u_2,u_3 \in V(M)\setminus V(N_M(v))$ be the remaining vertices. By Lemma~\ref{adjacency of vertices}, at least two of $u_1u_2$, $u_2u_3$, and $u_1u_3$ are edges. If $u_1u_2, u_2u_3, u_1u_3 \in E(M)$, then the only possible structures for $M$ are $W^2_{10c}$ and $W^2_{10d}$. If $u_1u_2, u_2u_3 \in E(M)$ and $u_1u_3 \notin E(M)$, then the only possible structures for $M$ are $W^2_{10e}$, $W^2_{10f}$, $W^2_{10g}$, and $W^2_{10h}$.

\noindent  \textbf{Case 4:} 
Suppose that $\deg(x)\leq 5$ for all $x \in V(M)$ and let $\deg(v)=5$. Then $N_M(v)= C_5$. Let $u_1,u_2,u_3,u_4 \in V(M)\setminus V(N_M(v))$ be the remaining vertices. Then the only possible structure for $M$ is $W^2_{10i}$.

\noindent   \textbf{Case 5:} Suppose that $\deg(v) = 4$ for all $v \in V(M)$. Then, by Proposition~\ref{prop-4}, $M= S^2_6$. This contradicts the assumption that $f_0(M) = 10$. Therefore, this case cannot occur.
\end{proof}

For $d\geq 3$, let  $M$ be a discrte $d$-pseudomanifold with $2d+6$ vertices. By Lemma~\ref{lem-6}, the degree of any vertex $v \in V(M)$ satisfies
\[
\deg(v) \in \{2d+4,\, 2d+3,\, 2d+2,\, 2d+1,\, 2d\}.
\]

\begin{lemma}\label{not exist 2d}
For $d \geq 3$, there does not exist a discrete $d$-pseudomanifold $M$ with at least $2d+6$ vertices such that $\deg(v)\leq 2d+1$ for every vertex $v\in V(M)$.
\end{lemma}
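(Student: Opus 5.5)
We argue by contradiction. Suppose $M$ is a discrete $d$-pseudomanifold with $d\ge 3$, $f_0(M)\ge 2d+6$, and $\deg(x)\le 2d+1$ for all $x$. By Theorem~\ref{prop-4}, not every link is octahedral, since otherwise $f_0(M)=2d+2$. So some vertex $v$ has $\deg(v)=2d+1$. By Theorem~\ref{2d+3 vertex}, $N_M(v)=C_5\star S^{d-3}_{2d-4}$. Write $R=V(M)\setminus(V(N_M(v))\cup\{v\})$; then $|R|\ge 4$. By Lemma~\ref{adjacency of vertices}, every $u\in R$ has a neighbour in $R$.

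The key degree observation is the one already used in Case~2 of Theorem~\ref{2d+4,d>=4}. For $x\in V(S^{d-3}_{2d-4})$ we have $\deg_{N_M(v)}(x)=2d-1$ and $xv\in E(M)$. Hence $x$ has at most one neighbour in $R$ (when $d\ge 4$). For $d=3$ the sphere $S^0_2$ is a pair $\{a,b\}$ with $\deg_{N_M(v)}(a)=5$, so again $a$ and $b$ each have at most one neighbour in $R$. Now take an edge $u_1u_2$ with $u_1,u_2\in R$. The link $N_M(u_1u_2)$ is a discrete $(d-2)$-pseudomanifold by Lemma~\ref{lem-3}, and it cannot contain any vertex of $S^{d-3}_{2d-4}$. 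Therefore
\[
V(N_M(u_1u_2))\subseteq V(C_5)\cup (R\setminus\{u_1,u_2\}).
\]
Also, the vertices of $C_5$ have $\deg_{N_M(v)}=2d-1$ as well (they are $2+(2d-4)+1$ within the join, plus $v$). So each vertex of $C_5$ also has at most one neighbour in $R$. Consequently $N_M(u_1u_2)\subseteq M[R\setminus\{u_1,u_2\}]$. In other words, no vertex of $N_M(v)$ lies in any edge link inside $R$.

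Now we exploit this to get a contradiction with the sizes. Consider $M[R]$. For every $u\in R$, the link $N_M(u)$ meets $N_M(v)$ in a set of vertices each adjacent to $u$ only among $R$. Pick an edge $u_1u_2$ in $R$. Its link lies entirely in $R$, so $u_1$ and $u_2$ have a common $(d-2)$-pseudomanifold of neighbours inside $R$. Iterate: take $u_3$ in $N_M(u_1u_2)$, so $u_1u_2u_3$ is a triangle in $R$. The link of this triangle, $N_{N_M(u_1)}(u_2u_3)$, is a $(d-3)$-pseudomanifold, again inside $R$ by the same argument. Continuing, we obtain that $N_M(u_1)\cap V(N_M(v))$ must be empty. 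Indeed, any $y\in N_M(u_1)\cap N_M(v)$ lies in $N_M(u_1)$, which is a pseudomanifold and hence has no isolated vertices in the $1$-skeleton sense. So $y$ is adjacent in $N_M(u_1)$ to some $z$, and $y$ lies in a facet-clique of $N_M(u_1)$ of size $d$. That clique lies in $N_M(u_1y)$ together with the other vertices. Since $y$ has only one neighbour ($u_1$) in $R$, all the other neighbours of $y$ in $N_M(u_1)$ lie in $N_M(v)$. Thus $N_M(u_1y)\subseteq N_{N_M(v)}(y)$, and both are $(d-2)$-pseudomanifolds, so they are equal by Lemma~\ref{prop-5}. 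This forces $N_M(y)\supseteq \{v,u_1\}\star N_{N_M(v)}(y)$, a $(d-1)$-pseudomanifold, so $N_M(y)$ is exactly this suspension.

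This local structure is where I expect the main obstacle to be, since it has to be turned into a global contradiction. The plan is a connectivity argument. $M[R]$ is nonempty, and by the above the edges between $R$ and $N_M(v)$ form a matching-like structure, with each vertex of $N_M(v)$ having at most one neighbour in $R$. A vertex $u\in R$ with all its neighbours in $R$ has $N_M(u)\subseteq M[R\setminus\{u\}]$. Meanwhile some $u\in R$ must have a neighbour $y\in N_M(v)$, by connectedness of $M$ (which follows from Lemma~\ref{prop-5}'s proof). At such a $y$ the link is $\{v,u\}\star N_{N_M(v)}(y)$. Then $N_M(uy)=N_{N_M(v)}(y)\subseteq N_M(v)$, so $u$ has at least $2d-2$ neighbours in $N_M(v)$. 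Each of these neighbours $y'$ likewise has link $\{v,u\}\star N_{N_M(v)}(y')$. Propagating along the connected graph $N_M(v)$ shows that $u$ is adjacent to all of $N_M(v)$. Then Lemma~\ref{prop-5} gives $N_M(u)=N_M(v)$, and Corollary~\ref{corollary:suspension} gives $f_0(M)=2d+3$, contradicting $f_0(M)\ge 2d+6$. The delicate point to check is the propagation step: a neighbour $y'$ of $y$ inside $N_M(uy)$ is adjacent to $u$, hence its unique $R$-neighbour is $u$, and the same argument applies to it. Connectedness of the discrete pseudomanifold $N_M(v)$ then finishes it.
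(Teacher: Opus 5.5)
\paragraph{The gap: the pentagon degree count.} Your argument breaks at the degree count for the pentagon. A vertex $c$ of $C_5$ in $N_M(v)=C_5\star S^{d-3}_{2d-4}$ is adjacent to its two cycle neighbours and to all $2d-4$ vertices of $S^{d-3}_{2d-4}$. So $\deg_{N_M(v)}(c)=2d-2$; there is no extra ``$+1$''. Adding $v$ gives $2d-1$, so $\deg(c)\le 2d+1$ allows $c$ \emph{two} neighbours in $R$. Only the vertices of $S^{d-3}_{2d-4}$ (including $a,b$ when $d=3$) are forced to have at most one.

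Your propagation step depends on exactly this. Suppose $y'\in C_5$ is adjacent to $u$ and to a second $u'\in R$. Then you cannot conclude $N_M(uy')\subseteq N_{N_M(v)}(y')$, so $N_M(y')$ need not be $\{v,u\}\star N_{N_M(v)}(y')$, and the propagation stops there. If every vertex of $N_M(v)$ touching $R$ is such a pentagon vertex, it never starts at all. For $d=3$ you cannot even get from $a$ to $b$ without crossing $C_5$.

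The intermediate claims $N_M(u_1u_2)\subseteq M[R\setminus\{u_1,u_2\}]$ and $N_M(u_1)\cap V(N_M(v))=\emptyset$ should be dropped. The first rests on the same miscount. The second is only asserted: what follows it derives the structure of $N_M(y)$, not a contradiction.

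\paragraph{The missing idea and how the proof compares.} You need to exclude a pentagon vertex with two $R$-neighbours, by applying Theorem~\ref{2d+3 vertex} once more. Suppose $c\in C_5$ is adjacent to $u_1,u_2\in R$. Then $\deg(c)=2d+1$ and $N_M(c)\cong C_5'\star S'$ with $|S'|=2d-4$. In $N_M(c)$ the vertex $v$ has degree $2d-2$ and misses exactly $u_1,u_2$. Since vertices of $S'$ have degree $2d-1$ there, $v$ lies on $C_5'$, and $u_1,u_2$ are the two vertices of $C_5'$ not adjacent to it. Hence every vertex of $S'$ is adjacent to both $u_1$ and $u_2$, and these vertices lie in $\{c^-,c^+\}\cup V(S^{d-3}_{2d-4})$, where $c^{\pm}$ are the cycle neighbours of $c$.

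For $d\ge 4$ this makes some vertex of $S^{d-3}_{2d-4}$ adjacent to two vertices of $R$, a contradiction. For $d=3$ it forces $S'=\{c^-,c^+\}$. Repeating around the pentagon, every vertex of $C_5$ is adjacent to $u_1,u_2$, and after relabelling $au_1,bu_2\in E(M)$. So $M$ contains the induced $C_5\star C_5(v,a,u_1,u_2,b)$, and Lemma~\ref{prop-5} gives $M=C_5\star C_5$, which has only $10$ vertices.

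With this claim in place your final paragraph goes through, and the result is a genuinely different proof from the paper's. The paper uses the bounds (one $R$-neighbour on the sphere, two on the pentagon) to push each $N_M(u_iu_j)$ into $C_5$, forcing $d=3$. It then overloads a pentagon vertex using two edges inside $R$. That route has to rule out triangles $u_iu_ju_k$ inside $R$, whereas your interface argument never needs to examine $M[R]$.

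Connectedness of $M$, which you invoke, is a tacit standing assumption of the paper, used rather than proved in Lemma~\ref{prop-5}.
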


\begin{proof}
If $\deg(v)=2d$ for all $v\in V(M)$, then by Proposition~\ref{prop-4}, we obtain $M = S^{d}_{2d+2}$, contradicting $f_0(M)\geq 2d+6$. Hence there exists a vertex $v$ with $\deg(v)=2d+1$, and by Theorem~\ref{2d+3 vertex}, we have $N_M(v) = C_5 * S^{d-3}_{2d-4}$.

Let $u_1,\dots,u_k \in V(M)\setminus (V(N_M(v)) \cup \{v\})$ so that $V(M)=\{v\}\cup V(N_M(v))\cup\{u_1,\dots,u_k \}$, $k\geq 4$. By Lemma~\ref{adjacency of vertices}, there exist at least two edges among the vertices $u_1,\ldots,u_k$; that is, $u_i u_j \in E(M)$ for at least two distinct pairs $(i,j)$. Let $x \in S^{d-3}_{2d-4}$. Then $\deg_{N_M(v)}(x)=2d-1$, and since $xv\in E(M)$, $x$ is not contained in any $N_M(u_i u_j)$.

Therefore, each vertex in $S^{d-3}_{2d-4}$ is adjacent to at most one $u_i$, and each vertex in $C_5$ is adjacent to at most two among $u_1,\dots,u_k$. Hence $u_i u_j u_k$ cannot form a triangle, since $N_{N_M}(u_iu_j u_k)$ cannot contain a vertex from $N_M(v)= C_5 * S^{d-3}_{2d-4}$. Therefore, $N_M(u_i u_j) \subset C_5$ for every possible edge $u_i u_j\in E(M)$. Thus, $d=3$, and $N_M(u_i u_j)=C_5$ for every possible edge $u_i u_j\in E(M)$. Since there are at least two edges among $u_1,\dots,u_k$, we obtain a contradiction on the degree of a vertex of $C_5$. This completes the proof.
\end{proof}

\begin{lemma}\label{2d+6 d(v)=2d+4}
For $d \geq 3$, let $M$ be a discrete $d$-pseudomanifold with $2d+6$ vertices. Suppose that there exists a vertex $v \in V(M)$ such that $\deg(v)=2d+4$. Then, up to isomorphism, 
\[
M = M^{d-1}_{2d+4} \star S^0_{2},
\]
where $M^{d-1}_{2d+4}$ is a discrete $(d-1)$-pseudomanifold with $2d+4$ vertices.
\end{lemma}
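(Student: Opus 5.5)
This is a direct application of Lemma~\ref{lem-4}. Since $f_0(M)=2d+6$ and $\deg(v)=2d+4=f_0(M)-2$, the hypothesis of Lemma~\ref{lem-4} holds with $n=2d+6$. The plan is therefore short.

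First, $\deg(v)=n-2$ means there is exactly one vertex $u\in V(M)$ with $u\neq v$ and $uv\notin E(M)$. Hence $V(M)=\{u,v\}\sqcup V(N_M(v))$. Every neighbor of $u$ lies in $V(M)\setminus\{u,v\}=V(N_M(v))$, so $N_M(u)$ is an induced subgraph of $N_M(v)$.

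Second, both $N_M(u)$ and $N_M(v)$ are discrete $(d-1)$-pseudomanifolds. If $N_M(u)$ were a proper induced subgraph of $N_M(v)$, this would contradict Lemma~\ref{prop-5}, applied inside the discrete $(d-1)$-pseudomanifold $N_M(v)$. So $N_M(u)=N_M(v)$.

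Third, Corollary~\ref{corollary:suspension} then gives $M=N_M(v)\star\{u,v\}$. Set $M^{d-1}_{2d+4}:=N_M(v)$. It is a discrete $(d-1)$-pseudomanifold with $\deg(v)=2d+4$ vertices, and $\{u,v\}$ with $uv\notin E(M)$ is exactly $S^0_2$. This yields $M=M^{d-1}_{2d+4}\star S^0_2$. All of this is exactly the content of Lemma~\ref{lem-4}, so the proof amounts to citing it with $n=2d+6$.

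I expect no real obstacle. The only point needing care is the application of Lemma~\ref{prop-5}: it requires $N_M(u)$ to be a nonempty proper induced subgraph of the ambient $(d-1)$-pseudomanifold $N_M(v)$ and to have the same dimension $d-1$. Both conditions are immediate here. Nonemptiness holds because $\deg(u)\ge 2d$ by Lemma~\ref{lem-6}, and inducedness holds because both graphs are induced subgraphs of $M$. Note that the hypothesis $d\ge 3$ is not actually needed for this step.
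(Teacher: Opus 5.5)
Your proposal is correct and matches the paper's proof, which simply invokes Lemma~\ref{lem-4} with $n=2d+6$. Your unpacking of that lemma (the unique non-neighbor $u$, then $N_M(u)=N_M(v)$ via Lemma~\ref{prop-5}, then the suspension) is exactly how Lemma~\ref{lem-4} is itself proved, and you are right that the hypothesis $d\ge 3$ plays no role in this step.
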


\begin{proof}
Since \(\deg(v)=2d+4\), by Lemma~\ref{lem-4}, we obtain $M = M^{d-1}_{2d+4}\star S^0_{2}.$
\end{proof}

\begin{lemma}\label{edge-contraction 1}
For \(d\geq 2\), let \(M\) be a discrete \(d\)-pseudomanifold. Let \(v,u,w\in V(M)\) satisfy
\(V(M)=V(N_M(v))\cup\{v,u,w\}\), where \(u,w\notin V(N_M(v))\).
Then \(uw\in E(M)\), and the contraction of the edge \(uw\) yields a
discrete \(d\)-pseudomanifold.
\end{lemma}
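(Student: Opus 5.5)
First I show that $uw\in E(M)$. The set $V(M)\setminus(V(N_M(v))\cup\{v\})$ is exactly $\{u,w\}$, so $k=2$ in Lemma~\ref{adjacency of vertices}. That lemma gives a neighbour of $u$ among $\{u,w\}\setminus\{u\}$, so $uw\in E(M)$. The same conclusion can also be reached directly: if $uw\notin E(M)$, then $N_M(u)\subseteq N_M(v)$, and Lemma~\ref{prop-5} forces $N_M(u)=N_M(v)$. Corollary~\ref{corollary:suspension} then gives $M=N_M(v)\star\{u,v\}$, so $w$ would not be a vertex of $M$, a contradiction.

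Next I show that $M$ is $Exp(M')$ for a suitable $M'$, since the paper observes that contracting the new edge of an edge expansion recovers the original graph. Put $W=N_M(uw)$; by Lemma~\ref{lem-3} it is a discrete $(d-2)$-pseudomanifold. Let $M'$ be the graph obtained by contracting $uw$ to a new vertex $z$, so that
\[
N_{M'}(z)=M\bigl[(V(N_M(u))\cup V(N_M(w)))\setminus\{u,w\}\bigr].
\]
Every neighbour of $u$ or $w$ other than $u,w$ themselves lies in $N_M(v)$, because $v$ is adjacent to neither of them. Hence $N_{M'}(z)$ is an induced subgraph of $N_M(v)$.

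The key step is to prove $N_{M'}(z)=N_M(v)$. I set $A=V(N_M(u))\setminus\{w\}$, $B=V(N_M(w))\setminus\{u\}$ and $X=A\cup B$. I would first show $X=V(N_M(v))$. Suppose some $y\in V(N_M(v))$ lies outside $X$. Then $N_M(y)\subseteq V(N_M(v))\cup\{v\}$, and I need a contradiction from this. One route is to use $N_M(uv')$-type links to show that $X$ induces a discrete $(d-1)$-pseudomanifold inside $N_M(v)$, after which Lemma~\ref{prop-5} forces $X=V(N_M(v))$. The more natural route is to show $M'[X]$ is a discrete $(d-1)$-pseudomanifold by checking links. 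For $x\in A\setminus B$, its link in $N_{M'}(z)$ is $N_M(xu)$ with $u$ replaced by $z$. For $x\in A\cap B=V(W)$, the link is $N_M(xu)\cup N_M(xw)$ glued along $N_M(xuw)$. That glued graph is exactly the link of $x$ in the contraction of the edge $uw$ inside $N_M(x)$, which has the same shape as the original problem one dimension lower. This suggests proving a stronger statement by induction on $d$: contracting an edge $uw$ with $N(u)\cup N(w)\setminus\{u,w\}$ contained in a pseudomanifold neighbourhood yields a pseudomanifold.

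I expect this step to be the main obstacle, because the direct hypothesis does not obviously recur in links. What I would try first is a counting and Lemma~\ref{prop-5} argument instead. Since $d\ge2$ and every $x\in V(N_M(v))$ satisfies $\deg(x)\ge 2d$ with $N_M(x)$ a pseudomanifold, I would aim to show that each $x\in V(N_M(v))$ is adjacent to $u$ or $w$. If $x$ were adjacent to neither, then $N_M(x)\subseteq N_M(v)\cup\{v\}$, and $N_M(x)$ would be a cone-like subgraph containing $v$. I would then compare $N_M(xv)$ with the link of $x$ in $N_M(v)$ and apply Lemma~\ref{prop-5} to $N_M(x)$ against $v\star N_{N_M(v)}(x)$; this would force $N_M(x)$ to be a cone, contradicting Corollary~\ref{lem-2}.

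Once $X=V(N_M(v))$ is established, $N_{M'}(z)=N_M(v)$ is a discrete $(d-1)$-pseudomanifold. For the remaining vertices, any $y\ne z$ has its link in $M'$ equal to its link in $M$, with $u$ and/or $w$ replaced by $z$. If $y$ was adjacent to exactly one of $u,w$, this is an isomorphism. If $y\in V(W)$, the link is the contraction of $uw$ inside $N_M(y)$. That contraction again satisfies the hypothesis of the lemma one dimension lower, with $v$ as the vertex whose link contains all other vertices, so I would close this case by induction on $d$. For the base case $d=2$ the links are cycles, and contracting an edge whose endpoints have a common neighbour set of size $2$ in a cycle of length at least $5$ gives a cycle of length at least $4$, which I would verify directly.
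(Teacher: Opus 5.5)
Your argument for $uw\in E(M)$ is correct. So is your key step that every $x\in V(N_M(v))$ is adjacent to $u$ or $w$, and it is more direct than you suggest: if $x$ missed both, every neighbour of $x$ other than $v$ would lie in $N_M(v)$, so $N_M(x)=v\star N_{N_M(v)}(x)$ outright, contradicting Corollary~\ref{lem-2}.

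The gap is in your final paragraph. There you assert that for $y$ adjacent to exactly one of $u,w$, say $u$, the link $N_{M'}(y)$ is just $N_M(y)$ with $u$ renamed $z$. Contraction does not merely rename. In $M'$ the vertex $z$ is also adjacent to every neighbour of $w$, so any $t\in V(N_M(y))$ with $tw\in E(M)$ and $tu\notin E(M)$ creates a new edge $zt$ in $N_{M'}(y)$. Such a $t$ exists exactly when $u,y,t,w$ is an induced $4$-cycle through $uw$, and then the link genuinely breaks. For example, contracting the edge $a_0a_1$ of $S^2_6$ turns the $4$-cycle $N(b_0)$ into a graph with $4$ vertices and $5$ edges. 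Excluding such $4$-cycles is the entire content of the paper's proof. Your induction for $y\in V(W)$ calls the same statement one dimension lower, so the unproved claim recurs at every level.

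The missing step is easy with your own tools. One option is to argue as the paper does: $y,t\in V(N_M(v))$ and $u,w\notin V(N_M(yt))$, so $N_M(yt)$ is a cone with apex $v$. Another is to apply Lemma~\ref{lem-4} to $N_M(y)$, in which $v$ is adjacent to everything except $u$; this gives $N_M(y)=N_{N_M(v)}(y)\star\{u,v\}$, and it also handles $d=2$ without appealing to a $0$-dimensional link.

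The best repair, though, is to notice that your key step already finishes the proof. The vertex $z$ is adjacent to exactly $V(N_M(v))$. Every edge of $M'$ not at $z$ lies in $M[V(N_M(v))\cup\{v\}]=v\star N_M(v)$. Hence $M'=N_M(v)\star\{v,z\}$ exactly, a suspension of a discrete $(d-1)$-pseudomanifold and therefore a discrete $d$-pseudomanifold, with no link-by-link check or induction needed.

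Compare this with the paper, which rules out induced $4$-cycles and then relies, without proof, on the principle that contracting such an edge preserves being a discrete pseudomanifold. Your completed route is self-contained. It also identifies the contraction explicitly as the suspension of $N_M(v)$, which is how the lemma is actually used later (e.g.\ the contraction giving $W^2_{9j}\star\{v,p\}$).
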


\begin{proof}
It follows from Lemma~\ref{adjacency of vertices}, that \(uw\in E(M)\).
It remains to show that the edge \(uw\) is not contained in an induced
4-cycle. Suppose, for contradiction, that there exists an induced
4-cycle \(C_4(u,x,y, w)\) such that neither \(xw\) nor \(yu\) is an edge in
\(M\). Then \(u,w\notin N_M(xy)\). Therefore,
\(N_M(xy)=N_{N_M(v)}(xy)\star\{v\}\), which contradicts
Corollary~\ref{lem-2}. Hence, the edge \(uw\) is not contained in an induced
4-cycle, and consequently, the contraction of the edge \(uw\) yields a
discrete \(d\)-pseudomanifold.
\end{proof}

\begin{lemma}\label{2d+6 d(v)=2d+3}
For $d \geq 3$, let $M$ be a discrete $d$-pseudomanifold with $2d+6$ vertices. Suppose that $\deg(x) \leq 2d+3$ for all $x \in V(M)$, and that there exists a vertex $v \in V(M)$ such that $\deg(v)=2d+3$. Then, up to isomorphism, 
\begin{align*}
M =& Exp( W_{9a}^2\star S_2^0) \ \text{or}\ Exp( W_{9b}^2 \star S_2^0) \ \text{or}\ Exp(W_{9c}^2 \star S^2_0) \ \text{or}\ C_7 \star C_5 \ \text{for } d=3, \\
M =&  C_5 \star W_{9a}^2 \ \text{or} \ C_5 \star W_{9b}^2 \ \text{or} \ C_6 \star W_8^2\ \text{or}\ Exp(W_{11a}^3\star S^2_0)\ \text{or}\ Exp( W_{11b}^3 \star S^2_0)\ \text{or}\\
&M^2_9\star C_5 \ \text{for}\ d=4, \\
M =& Exp(W_8^2\star C_5 \star S^2_0) \ \text{or}\ M_{11}^3\star C_5 \ \text{for } d=5, \\
M =& C_5 \star C_5 \star W_8^2 \ \text{or}\ M_{13}^4\star C_5 \ \text{for } d=6, \\
M =& M_{15}^5 \star C_5 \ \text{for}\  d = 7.
\end{align*}
\end{lemma}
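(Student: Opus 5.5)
Since $\deg(v)=2d+3$ and $f_0(M)=2d+6$, exactly two vertices $u,w$ lie outside $V(N_M(v))\cup\{v\}$. By Lemma~\ref{edge-contraction 1}, $uw\in E(M)$, and contracting $uw$ to a vertex $z$ yields a discrete $d$-pseudomanifold $M'$ on $2d+5$ vertices. I will use this reduction throughout. In $M'$ every vertex of $N_M(v)$ is adjacent to $z$, so $N_{M'}(z)\supseteq N_M(v)$. Since both are discrete $(d-1)$-pseudomanifolds, Lemma~\ref{prop-5} gives $N_{M'}(z)=N_{M'}(v)=N_M(v)$. Corollary~\ref{corollary:suspension} then gives $M'=N_M(v)\star\{v,z\}$. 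Hence
\[
M=Exp\bigl(N_M(v)\star S^0_2\bigr),
\]
where the expansion is performed at the suspension vertex $z$ with respect to the discrete $(d-2)$-pseudomanifold $W=N_M(uw)\subseteq N_M(v)$. Thus the proof reduces to two tasks: list the possible links $N_M(v)$, which are discrete $(d-1)$-pseudomanifolds on $2(d-1)+5$ vertices, and determine which separating $W$ are admissible under the hypothesis $\deg(x)\le 2d+3$.

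\textbf{Admissibility constraint.} For $x\in W$ we have $\deg_M(x)=\deg_{N_M(v)}(x)+2$, since $x$ is adjacent to $v$, $u$ and $w$. So every vertex of $W$ must have degree at most $2d+1$ inside $N_M(v)$. In particular, any suspension vertex $a$ of a factor $\ldots\star\{a,b\}$ of $N_M(v)$ with $\deg_{N_M(v)}(a)=2d+2$ is excluded from $W$. If such a vertex is adjacent to neither $u$ nor $w$, its link in $M$ is a cone, which contradicts Corollary~\ref{lem-2}. So in that situation $a$ goes to $u$ and $b$ goes to $w$ (or the reverse), and the cycle $C_5(v,a,u,w,b)$ appears as a join factor. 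This is exactly the mechanism used in Theorems~\ref{3dim with 10 vertices}, \ref{4dim with 13 vertices} and~\ref{2d+5,d>=5}. By Lemma~\ref{prop-5} it forces $W=N_M(v)$ minus the suspension, so $M=(\text{rest})\star C_5$. This produces the families $M^{d-1}_{2d+1}\star C_5$ for $d=4,\dots,7$ (for $d=3$ it gives $C_7\star C_5$) and the $d=6$ case $C_5\star C_5\star W_8^2$.

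\textbf{Case analysis by dimension.} I go through $d=3,\dots,7$ and then $d\ge 8$. For $d=3$, $N_M(v)$ is one of the 2-pseudomanifolds on 9 vertices from Theorem~\ref{2dim 9 vertex}: either $C_7\star S^0_2$, which yields $C_7\star C_5$, or $W^2_{9a},W^2_{9b},W^2_{9c}$, which yield the expansions $Exp(W^2_{9\ast}\star S^0_2)$. The condition $\deg\le 2d+3$ already holds automatically here, once we check that the $S^0_2$-join case is absorbed. For $d=4$, I use Theorem~\ref{3 dim 11 vertex}:
\begin{itemize}
\item $M^2_9\star S^0_2$ gives $M^2_9\star C_5$;
\item $C_6\star C_5$ gives $C_6\star W_8^2$, by analogy with the computation in Theorem~\ref{4dim with 13 vertices}, where $W$ is $C_6\star S^0_2$ inside $C_6\star C_5$;
\item $W^3_{11a}$ and $W^3_{11b}$ give their expansions.
\end{itemize}
The forms $C_5\star W^2_{9a}$ and $C_5\star W^2_{9b}$ should arise either from some $W^2_{9\ast}\star S^0_2$ sitting inside the link, or through isomorphism with certain expansions; I will identify them by explicit vertex relabelling. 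For $d=5$ I use Theorem~\ref{4dim with 13 vertices}, and for $d=6$ and $d=7$ I use Theorem~\ref{2d+5,d>=5}. In these cases most options collapse to $M^{d-1}_{2d+1}\star C_5$. The exceptions are $W_8^2\star C_5$ for $d=5$, where the expansion is genuine, and $C_5^{\star 3}$ for $d=6$.

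\textbf{Main obstacle.} For $d\ge 8$ we have $N_M(v)=M^{d-2}_{2d+1}\star S^0_2$, where the pseudomanifold factor $M^{d-2}_{2d+1}$ has $2(d-2)+5$ vertices. Iterating Theorem~\ref{2d+5,d>=5}, this link contains an octahedral factor $S^{k}$ of large dimension, and every vertex of that factor has degree $2d+2$ in $N_M(v)$. Hence none of these vertices can lie in $W$. The lower bound $f_0(W)\ge 2d-2$ from Theorem~\ref{vertex edge lowerbound} then contradicts the small size of the remaining part, so no such $M$ exists; this is why the list stops at $d=7$. The genuinely laborious step is the low-dimensional enumeration ($d=3,4$), where one must list every separating induced $(d-2)$-pseudomanifold $W$ in the $W$-type graphs and reduce the resulting expansions up to isomorphism. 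I would first use the degree constraint to prune candidate sets $W$, then match the survivors against join decompositions.
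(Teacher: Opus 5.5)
Your route is essentially the paper's. Both arguments use the two outside vertices $u,w$ with $uw\in E(M)$, and both use the degree bound to keep suspension vertices out of $N_M(uw)$, so that Lemma~\ref{prop-5} yields the factor $C_5(v,a,u,w,b)$. Otherwise both contract $uw$ via Lemma~\ref{edge-contraction 1} and present $M$ as an expansion of $N_M(v)\star S^0_2$. Doing that contraction uniformly up front is fine, but add the one-line reason that every $x\in V(N_M(v))$ is adjacent to $u$ or $w$: otherwise $N_M(x)=v\star N_{N_M(v)}(x)$ is a cone, contradicting Corollary~\ref{lem-2}.

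The real gap is your exclusion of $d\ge 8$, which fails at $d=8$. There $N_M(v)=M^5_{15}\star S^1_4$. After discarding the octahedral factor you only know $W\subseteq M^5_{15}$ with $f_0(W)\ge 2d-2=14\le 15$, so the vertex count gives no contradiction. You need the dimension argument the paper uses (``$N_M(uw)\subseteq M^5_{15}$, hence $d\le 7$''). Namely, $W$ is an induced discrete $(d-2)$-pseudomanifold, so it contains a $(d-1)$-clique, while a discrete $5$-pseudomanifold has clique number $6$ (equivalently, apply Lemma~\ref{prop-5}). Alternatively, your own mechanism closes it. $N_M(v)=N\star\{a,b\}$ forces $M=N\star C_5$. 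For $d\ge 8$, Theorem~\ref{2d+5,d>=5} makes $N$ itself a suspension $N'\star\{c,e\}$, and then $\deg_M(c)=(2d-1)+5=2d+4$, a contradiction.

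Three further points need repair.
\begin{itemize}
\item Take $d=4$ and $N_M(v)=C_6(x_1,\dots,x_6)\star C_5(y_1,\dots,y_5)$. You list only $W=C_6\star\{y_1,y_3\}$. But $\{x_1,x_3\}\star C_5$ and $\{x_1,x_4\}\star C_5$ are also induced, separating and admissible, since their vertices have degree $7$ or $8=2d$ in $N_M(v)$. They give $C_5\star W^2_{9a}$ and $C_5\star W^2_{9b}$, which is exactly where those entries of the statement come from. (They are also instances of $M^2_9\star C_5$.) So that enumeration is incomplete, not merely awaiting a relabelling.
\item Your degree bookkeeping is off by one. For $x\in W$ the neighbours $v,u,w$ give $\deg_M(x)=\deg_{N_M(v)}(x)+3$. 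Admissibility therefore means $\deg_{N_M(v)}(x)\le 2d$. Suspension and octahedral-factor vertices have $\deg_{N_M(v)}=2d+1$, not $2d+2$. As written, your bound $\le 2d+1$ would not exclude them; your conclusion survives only because the two slips cancel. Also, $M^{d-1}_{2d+1}\star C_5$ should read $M^{d-2}_{2d+1}\star C_5$.
\item The ``laborious'' enumeration of separating $W$ inside $W^2_{9\ast}$, $W^3_{11\ast}$ and $W^2_8\star C_5$ is unnecessary. The statement only asserts $M=Exp(\cdot)$ in those cases, which your reduction already delivers.
\end{itemize}
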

\begin{proof}
$N_M(v)$ is a discrete $(d-1)$-pseudomanifold on $(2d+3)$ vertices. By Theorem~\ref{2d+5,d>=5}, $N_M(v)= M_{15}^5\star S^{d-7}_{2d-12}$. By Lemma~\ref{adjacency of vertices}, there exist $u,w\in V(M)\setminus V(N_M(v))$ with $uw\in E(M)$ and $\{v,u,w\}\cup V(N_M(v))=V(M)$. Since every $x\in V(S^{d-7}_{2d-12})$ satisfies $\deg(x)=2d+1$ and $xv\in E(M)$, $x$ cannot be adjacent to both $u$ and $w$; hence $N_M(uw)\subseteq M_{15}^5$ hence $d\le7$. If $N_M(v)= N_{2d+1}^{d-2}\star\{a,b\}$, where $N_{2d+1}^{d-2}$ is a discrete $(d-2)$-pseudomanifold on $(2d+1)$ vertices, then neither $a$ nor $b$ is adjacent to both $u$ and $w$, otherwise $\deg(a)=\deg(b)=2d+4$. Thus $N_M(uw)\subseteq N_{2d+1}^{d-2}$. Therefore, by Lemma \ref{prop-5}, $N_M(uw)= N_{2d+1}^{d-2}$. Further, $au,aw\notin E(M)$ would imply $N_M(a)= N_{2d+1}^{d-2}\star v$, contradicting Corollary~\ref{lem-2}; similarly for $b$, so without loss of generality $au,bw\in E(M)$, and hence $M= N_{2d+1}^{d-2}\star C_5(v,a,u,w,b)$ for all $3\leq d\leq 7$. Now we will look the other possibilities of $N_M(v).$ For \(d=3\), Theorem~\ref{2dim 9 vertex} implies that 
\(N_M(v)= W_{9a}^2, W_{9b}^2\), or \(W_{9c}^2\). By Lemma \ref{edge-contraction 1}, contracting the edge \(uw\) 
to a vertex \(p\) gives \(W_{9j}^2\star\{v,p\}\) we obtain $M= Exp(W_{9j}^2\star \{v,p\})$, for \(j\in\{a,b,c\}\), with respect to the vertex \(p\) and an induced cycle \(W\) in 
\(W_{9j}^2\).

For $d=4$, Theorem~\ref{3 dim 11 vertex} yields $N_M(v)= C_6\star C_5$, $W_{11a}^3$, or $W_{11b}^3$. If $N_M(v)=C_6(x_1,\ldots,x_6)\star C_5(y_1,\ldots,y_5)$, then $6\le f_0(N_M(uw))\leq 10$, since $f_0(N_M(uw))=11$ contradicts Corollary~\ref{lem-2}$;$ the only possible induced subgraphs are $C_5\star S_2^0$ and $C_6\star S_2^0$. If $N_M(uw)=C_5\star\{x_1,x_3\}$, $C_5\star\{x_1,x_4\}$, or $C_6\star\{y_1,y_3\}$. Then upto isomorphism $M= v\star(C_6\star C_5)\cup u\star\bigl(C_5\star\{x_1x_2,x_2x_3\}\bigr)\cup w\star\bigl(C_5\star\{x_3x_4,x_4x_5,x_5x_6,x_6x_1\}\bigr) \cup \{uw\},$ which is isomorphic to $C_5 \star W_{9a}^2$ or $M= v\star(C_6\star C_5)\cup u\star\bigl(C_5\star\{x_1x_2,x_2x_3,x_3x_4\}\bigr)\cup w\star\bigl(C_5\star\{x_4x_5,x_5x_6,x_6x_1\}\bigr)\cup \{uw\}$, which is isomorphic to $C_5 \star W_{9b}^2$ or $M= v\star(C_6\star C_5)\cup u\star\bigl(C_6\star\{y_1y_2,y_2y_3\}\bigr)\cup w\star\bigl(C_6\star\{y_3y_4,y_4y_5,y_5y_1\}\bigr)\cup \{uw\}$, which is isomorphic to $C_6\star W_8^2$. By Lemma \ref{edge-contraction 1},these are $ Exp(C_6\star C_5\star \{v,p\})$. If $N_M(v)= W_{11a}^3$ or $W_{11b}^3$, then by Lemma \ref{edge-contraction 1}, contracting $uw$ to $p$ yields  $M= Exp(W_{11j}^3 \star \{v,p\})$ for $j\in\{a,b\}$, with respect to the vertex \(p\) and an induced discrete 2-pseudomanifold \(W\) in 
\(W_{11j}^3\). 

For $d=5$, Theorem~\ref{4dim with 13 vertices} gives $N_M(v)= W_8^2\star C_5$; by Lemma \ref{edge-contraction 1}, contracting $uw$ to $p$ yields $M= Exp(W_8^2\star C_5 \star \{v,p\})$, with respect to the vertex \(p\) and an induced discrete 3-pseudomanifold \(W\) in \(W_8^2\star C_5 \).

For $d=6$, Theorem~\ref{2d+5,d>=5} gives $N_M(v)= C_5\star C_5\star C_5$. By the classification of $4$-dimensional discrete pseudomanifolds, $C_5\star C_5\star S_2^0$ is the unique possible induced subgraph. Writing $N_M(v)=C_5\star C_5\star C_5(a,b,c,d,e)$, we obtain $M=v\star(C_5\star C_5\star C_5)\cup u\star(C_5\star C_5\star\{ab,bc\})\cup w\star(C_5\star C_5\star\{cd,de,ea\})\cup uw\star(C_5\star C_5\star\{ac\})= C_5\star C_5\star W_8^2$. Finally, for $d=7$, Theorem~\ref{2d+5,d>=5} yields $N_M(v)= M_{15}^5\star\{a,b\}$; hence, by the argument that we gave first, $M= M_{15}^5\star C_5$ for $d=7$.
\end{proof}

\begin{lemma}\label{edge-contraction 2}
For \(d\geq 3\), let \(M\) be a discrete \(d\)-pseudomanifold. Let \(v,u_1,u_2,u_3\in V(M)\) satisfy
\(V(M)=V(N_M(v))\cup\{v,u_1,u_2,u_3\}\), where \(u_1,u_2,u_3\notin V(N_M(v))\), \(u_1u_2,u_2u_3\in E(M)\), and \(u_1u_3\notin E(M)\).
Then, the contraction of the edge \(u_1u_2\) yields a
discrete \(d\)-pseudomanifold.
\end{lemma}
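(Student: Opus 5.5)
The plan is to follow the template of Lemma~\ref{edge-contraction 1}. In the proof of that lemma, the criterion actually used is that contracting an edge $u_1u_2$ of a discrete $d$-pseudomanifold gives a discrete $d$-pseudomanifold as soon as $u_1u_2$ lies in no induced $4$-cycle. Equivalently, for every induced path $x\,u_1\,u_2\,y$ we need $xy\notin E(M)$. I take this criterion as established there and reduce the statement to showing that there is no induced $4$-cycle $C_4(u_1,x,y,u_2)$, i.e.\ with $u_1x,xy,yu_2\in E(M)$ and $xu_2,yu_1\notin E(M)$.

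Suppose such a cycle exists. The case analysis depends on where $x$ and $y$ lie, and the possibilities are $V(N_M(v))$ or $\{u_3\}$; note that $v$ is adjacent to neither $u_1$ nor $u_2$.

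\textbf{Case 1: $x,y\in V(N_M(v))$.} Consider the edge $xy$. Its common neighbourhood $N_M(xy)$ contains $v$ and is contained in $V(N_M(v))\cup\{v,u_1,u_2,u_3\}$. By assumption $u_1,u_2\notin N_M(xy)$. If also $u_3\notin N_M(xy)$, then $N_M(xy)=N_{N_M(v)}(xy)\star\{v\}$ is a cone. By Lemma~\ref{lem-3} it is a discrete $(d-2)$-pseudomanifold, so this contradicts Corollary~\ref{lem-2}, exactly as in Lemma~\ref{edge-contraction 1}. The remaining subcase, where $u_3$ is adjacent to both $x$ and $y$, is the one I expect to be the main obstacle. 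The idea is to exploit $u_1u_3\notin E(M)$: every neighbour of $v$ in $N_M(xy)$ is also a vertex of $N_{N_M(y)}(x)$, and $v,u_3$ are both in $N_M(xy)$. I would then try to show that $v$ and $u_3$ have the same neighbourhood inside $N_M(xy)$, since the only vertices outside $N_M(v)$ available are $u_3$ itself. This would make $N_M(xy)$ a suspension $N_{N_M(v)}(xy)\star\{v,u_3\}$ with $v u_3\notin E(M)$. Next, pass to the link of $x$ and apply Lemma~\ref{prop-5} to compare $N_{N_M(x)}(y)$ with the induced subgraph on $N_M(v)\cap N_M(x)$. The goal is to derive that $u_1$ (adjacent to $x$ but not to $y$) forces some vertex of $N_M(x)$ outside $N_M(v)\cup\{v,u_3\}$, and hence a degree or dimension contradiction. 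If this fails, I would instead look at the edge $xu_1$. Its link lies in $V(N_M(v))\cup\{u_2\}$, because $u_1u_3\notin E(M)$ and $v\notin N_M(u_1)$, and it contains $u_2$ only if $x u_2\in E(M)$, which is excluded. So $N_M(xu_1)\subseteq N_M(v)$, and by Lemma~\ref{prop-5} applied inside $N_M(x)$ the link of $x$ would be forced into a rigid shape that contradicts the presence of $y$.

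\textbf{Case 2: $x=u_3$.} Then $x=u_3$ must be adjacent to $u_1$, but $u_1u_3\notin E(M)$, so this is impossible.

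\textbf{Case 3: $y=u_3$.} Here $x\in V(N_M(v))$ with $xu_1,xu_3\in E(M)$ and $xu_2\notin E(M)$. Consider $N_M(xu_1)$. It avoids $v$ and $u_3$ (since $u_1v,u_1u_3\notin E(M)$) and avoids $u_2$ (since $xu_2\notin E(M)$). Hence it is contained in $N_M(v)$, so $N_M(xu_1)\subseteq N_{N_M(v)}(x)$, and both are discrete $(d-2)$-pseudomanifolds. Lemma~\ref{prop-5} then gives $N_M(xu_1)=N_{N_M(v)}(x)$. Inside $N_M(x)$, the vertices $v$ and $u_1$ therefore have the same neighbourhood. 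By Corollary~\ref{corollary:suspension}, $N_M(x)=N_{N_M(v)}(x)\star\{v,u_1\}$. But $u_3\in N_M(x)$ and $u_3$ is neither $v$, nor $u_1$, nor in $N_M(v)$. This is a contradiction, and the same argument finishes Case~1 if the direct approach there stalls.
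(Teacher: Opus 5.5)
Your proposal is correct and is essentially the paper's argument. An induced $4$-cycle $C_4(u_1,x,y,u_2)$ is excluded by the cone contradiction (Corollary~\ref{lem-2}) when $u_3\notin N_M(xy)$. Otherwise it is excluded because $v$ and $u_1$ would have the same neighbourhood in $N_M(x)$ even though $xu_3\in E(M)$: the paper gets this by citing Lemma~\ref{adjacency of vertices}, and you get it by inlining that lemma's proof via Lemma~\ref{prop-5} and Corollary~\ref{corollary:suspension}.

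Your Case~3 argument does close the remaining subcase of Case~1; in fact $N_M(xu_1)=N_M(xv)$ already forces $yu_1\in E(M)$ whenever $y\in V(N_M(v))$. So the speculative discussion in Case~1 can simply be deleted. Your separate treatment of $y=u_3$ makes explicit a case that the paper's proof of this lemma does not single out.
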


\begin{proof}
It remains to show that the edge \(u_1u_2\) is not contained in an
induced 4-cycle. Suppose, for contradiction, that there exists an induced
4-cycle \(C_4(u_1,x,y,u_2)\) such that neither \(xu_2\) nor \(yu_1\) is an
edge of \(M\). Then \(u_1,u_2\notin N_M(xy)\), and hence, by Corollary \ref{lem-2},
\(N_M(xy)=N_{N_M(v)}(xy)\star\{v,u_3\}\). Moreover, \(N_M(x)\) is a
discrete \((d-1)\)-pseudomanifold. Also,
\(V(N_M(x))=\{v\}\cup V(N_{N_M(v)}(x))\cup\{u_1,u_3\}\), where
\(u_1,u_3\notin V(N_{N_M(v)}(x))\).
Therefore, by Lemma~\ref{adjacency of vertices}, we obtain
\(u_1u_3\in E(N_M(x))\), which contradicts the hypothesis. Hence, the
edge \(u_1u_2\) is not contained in an induced 4-cycle, and the
contraction of the edge \(u_1u_2\) yields a discrete
\(d\)-pseudomanifold.
\end{proof}

\begin{lemma}\label{2d+6 d(v)=2d+2}
For $d \geq 3$, let $M$ be a discrete $d$-pseudomanifold with $2d+6$ vertices. Suppose that $\deg(x) \leq 2d+2$ for all $x \in V(M)$, and that there exists a vertex $v \in V(M)$ such that $\deg(v)=2d+2$. Then, $d=3$, and up to isomorphism, 
$M = C_6 \star C_6 \ \text{or}\  W_{12}^3$, where $W_{12}^3$ is the graph depicted in Figure \ref{fig:2}. 
\end{lemma}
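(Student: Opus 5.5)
Fix $v$ with $\deg(v)=2d+2$. Then $N_M(v)$ is a discrete $(d-1)$-pseudomanifold on $2d+2=2(d-1)+4$ vertices. By Theorems~\ref{3dim with 10 vertices} and \ref{2d+4,d>=4} (and Theorem~\ref{dim-2,vertex-8} when $d=3$), there are two possibilities. For $d\ge 4$ we have $N_M(v)=M^{d-2}_{2d}\star\{a,b\}$. For $d=3$ we have $N_M(v)=C_6\star S^0_2$ or $W_8^2$. Write $V(M)=\{v\}\cup V(N_M(v))\cup\{u_1,u_2,u_3\}$. By Lemma~\ref{adjacency of vertices} we may assume $u_1u_2,u_2u_3\in E(M)$. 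The whole proof rests on degree counting against the bound $\deg(x)\le 2d+2$.

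\textbf{Ruling out $d\ge 4$.} For a suspension vertex $x\in\{a,b\}$ we have $\deg_{N_M(v)}(x)=2d$ and $xv\in E(M)$, so $x$ is adjacent to at most one of $u_1,u_2,u_3$. Hence $a,b\notin N_M(u_1u_2)$, so $N_M(u_1u_2)\subseteq M^{d-2}_{2d}$. By Lemma~\ref{prop-5}, $N_M(u_1u_2)=M^{d-2}_{2d}$, and the same holds for $N_M(u_2u_3)$. But then each vertex $y\in V(M^{d-2}_{2d})$ has $\deg(y)\ge \deg_{N_M(v)}(y)+1+3$, counting $v,u_1,u_2,u_3$. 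Also $\deg_{N_M(v)}(y)\ge 2d-2$, since $y$ is adjacent to $a$, $b$ and to at least $2(d-2)$ vertices of $M^{d-2}_{2d}$. So $\deg(y)\ge 2d+2$ with equality forced everywhere. This should either contradict $\deg\le 2d+2$ or force $N_{M^{d-2}_{2d}}(y)$ to be octahedral for every $y$, making $M^{d-2}_{2d}$ octahedral by Theorem~\ref{prop-4}, contradicting $f_0=2d\neq 2d-2$. The same argument disposes of $d=3$ with $N_M(v)=C_6\star\{a,b\}$ whenever both $u_1u_2$ and $u_2u_3$ have $C_6$ as link, since a vertex of $C_6$ would then have degree $\ge 8$.

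\textbf{The case $d=3$.} Suppose first $N_M(v)=C_6\star\{a,b\}$. At most one edge among the $u_i$ can have link $C_6$, and links of edges are cycles of length at least $4$ inside $N_M(v)$ avoiding the forbidden double adjacencies. Here I would use Lemma~\ref{edge-contraction 2} (or Lemma~\ref{edge-contraction 1} if $u_1u_3\in E(M)$ as well) to contract $u_1u_2$. The result is a $3$-pseudomanifold on $11$ vertices with maximum degree still $\le 8$, apart from the new vertex. By Theorem~\ref{3 dim 11 vertex} it is one of $C_6\star C_5$, $W^3_{11a}$, $W^3_{11b}$, or a suspension. Re-expanding within the degree bound should leave exactly $C_6\star C_6$, where $v,a,u_1,u_2,u_3,b$ form the second hexagon; here $a$ is adjacent to $u_1$, $b$ to $u_3$, and $N_M(u_1u_2)=N_M(u_2u_3)=C_6$ is impossible, so the links must be arranged as in a hexagon join. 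The case $N_M(v)=W_8^2$ is handled the same way. The links of $u_iu_j$ are induced $4$- or $5$-cycles of $W_8^2$, and the degree bound $8$ restricts which vertices of $W_8^2$ (those of degree $4$ or $5$ there) can meet two of the $u_i$. Contracting and comparing with the $11$-vertex list should yield $W_{12}^3$.

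\textbf{Main obstacle.} The main difficulty is this final $d=3$ enumeration, and in particular verifying that all the surviving configurations are isomorphic to $C_6\star C_6$ or $W_{12}^3$. I would organize it by whether $u_1u_3\in E(M)$, which splits it into a triangle versus a path among the $u_i$. For each case I would list the possible edge links inside $N_M(v)$, using the degrees in $N_M(v)$ to prune, and then check the vertex links of the remaining vertices against Theorem~\ref{dim-2,vertex-8} and Theorem~\ref{2dim 9 vertex}.
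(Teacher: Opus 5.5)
There is a genuine gap, and it sits in the case $d=3$, which carries the whole conclusion of the lemma.

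\textbf{The $d=3$ pruning is wrong.} You assert that if $N_M(v)=C_6\star\{a,b\}$ and $N_M(u_1u_2)=N_M(u_2u_3)=C_6$, then a vertex of $C_6$ gets degree $\ge 8$, a contradiction. But for $d=3$ the hypothesis is $\deg\le 2d+2=8$, so degree $8$ is allowed. The octahedral-link argument via Theorem~\ref{prop-4} does not help either, because every vertex link of any cycle is $S^0_2$. In fact this is exactly the configuration of the answer. In $C_6\star C_6(v,a,u_1,u_2,u_3,b)$ both $u_1u_2$ and $u_2u_3$ have the first hexagon as link, and each of its vertices has degree exactly $8$. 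So your pruning would discard the very graph you later say survives. Beyond that, the rest of your $d=3$ analysis (``re-expanding \dots should leave'', ``should yield $W^3_{12}$'') is never carried out. Also, Lemma~\ref{edge-contraction 1} needs exactly two vertices outside $\{v\}\cup V(N_M(v))$, so it cannot be used when $u_1u_2u_3$ is a triangle.

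\textbf{The missing idea for $C_6\star\{a,b\}$} is to use the suspension vertices. The vertex $a$ already has degree $7$, so it sees at most one $u_i$, and by Corollary~\ref{lem-2} it sees at least one. Lemma~\ref{lem-4} then gives $N_M(a)=C_6\star\{v,u_i\}$, so $u_i$ sees all of $C_6$.
\begin{itemize}
\item In the triangle case this gives $\deg(u_i)\ge 9$, a contradiction.
\item In the path case it forces $a$ and $b$ to avoid $u_2$ and to see different endpoints. Lemma~\ref{prop-5} then gives $N_M(u_1u_2)=N_M(u_2u_3)=C_6$, and $M=C_6\star C_6$ follows.
\end{itemize}

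\textbf{The $W_8^2$ subcase is not argued at all.} The paper excludes the path case by contracting $u_1u_2$ (Lemma~\ref{edge-contraction 2}) to get $W^3_{11a}$ or $W^3_{11b}$, then checking that no re-expansion respects the degree bound. In the triangle case, $N_M(u_1u_2u_3)$ is a pair $x,y$ of degree-$4$ vertices of $W_8^2$, the only vertices that can see three $u_i$. Each $N_M(u_iu_j)$ is an $x$--$y$ path of $W_8^2$ closed up through the third $u_k$. The degree bound on the interior of the unused path then pins down $W^3_{12}$.

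\textbf{The reduction for $d\ge 4$ has two holes.}
\begin{itemize}
\item For $d=4$, the link $N_M(v)$ is a $3$-pseudomanifold on $10$ vertices, and Theorem~\ref{3dim with 10 vertices} allows $C_5\star C_5$. This is not a suspension, so your $\{a,b\}$ argument does not apply. It is easily excluded by counting. Each vertex of $C_5\star C_5$ has degree $8$ before meeting the $u_i$, so it sees at most two of them. In the path case, $N_M(u_1u_2)$ and $N_M(u_2u_3)$ would be disjoint with at least $6$ vertices each inside a $10$-vertex set. In the triangle case, every vertex of the cycle $N_M(u_1u_2u_3)$ would see all three $u_i$.
\item You never treat $u_1u_3\in E(M)$. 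Then $u_3\in N_M(u_1u_2)$, so $N_M(u_1u_2)\not\subseteq M^{d-2}_{2d}$ and Lemma~\ref{prop-5} cannot be applied as you do. The suspension-vertex observation fixes this: $N_M(a)=M^{d-2}_{2d}\star\{v,u_i\}$, hence $\deg(u_i)\ge 2d+3$.
\end{itemize}
Your path-case argument for suspension links is sound for $d\ge 4$: both edge links equal $M^{d-2}_{2d}$, which forces octahedral vertex links, contradicting Theorem~\ref{prop-4}.
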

\begin{proof}
$N_M(v)$ is a discrete $(d-1)$-pseudomanifold on $2d+2$ vertices. By Theorem~\ref{2d+4,d>=4}, $N_M(v)= M_{10}^3\star S^{d-5}_{2d-8}$. Let $u_1,u_2,u_3\in V(M)\setminus\bigl(V(N_M(v))\cup\{v\}\bigr)$ be such that $\{v\}\cup V(N_M(v))\cup\{u_1,u_2,u_3\}=V(M)$. By Lemma~\ref{adjacency of vertices}, we may assume $u_1u_2,u_2u_3\in E(M)$. Since every $x\in V(S^{d-5}_{2d-8})$ satisfies $\deg_{N_M(v)}(x)=2d$ and $xv\in E(M)$, $x$ is adjacent to at most one of $u_1,u_2,u_3$. Therefore, $N_M(u_1u_2)\subseteq M_{10}^3 \cup \{u_3\}$. Hence, $d\le 5$.

For $d=3$, Theorem~\ref{dim-2,vertex-8} gives $N_M(v)= C_6\star\{a,b\}$ or $W_8^2$. Assume $u_1u_3\notin E(M)$. If $N_M(v)= C_6\star\{a,b\}$, then $a$ and $b$ can be adjacent to at most one $u_i$ ($i\in\{1,2,3\}$), otherwise $\deg(a),\deg(b)\ge 9$, a contradiction. Hence, by Lemma~\ref{prop-5}, $N_M(u_1u_2)=N_M(u_2u_3)=C_6$. If $a$ (resp.\ $b$) is not adjacent to any of $u_1,u_2,u_3$, then $N_M(a)= v\star C_6$ (resp.\ $N_M(b)= v\star C_6$), contradicting Corollary~\ref{lem-2}. Thus, both $a$ and $b$ are adjacent to at least one $u_i$, but neither can be adjacent to $u_2$, otherwise $\deg(u_2)\ge 9$. Without loss of generality, $au_1,bu_3\in E(M)$, and hence $M= C_6\star C_6(v,a,u_1,u_2,u_3,b)$.

If $N_M(v)= W_8^2$, then by lemma \ref{edge-contraction 2}, contracting the edge $u_1u_2$ to a single vertex $p$ gives a graph isomorphic to either $W_{11a}^3$ or $W_{11b}^3$. Let the resulting graph be $M'$. After this contraction, the degree of each vertex in the cycle $N_M(u_1u_2)$ decreases by $1$ in $M'$. However, in $W_{11b}^3$, there is no induced $n$-cycle, $n\geq 4$, such that every vertex of the cycle has degree less than $8$. Hence, $M$ cannot be isomorphic to $Exp(W_{11b}^3)$. If $M= Exp(W_{11a}^3)$, with respect to the vertex \(p\) and an induced cycle \(W\) in \(N_{W_{11a}^3}(p)\) such that the degree of each vertex of \(W\) in 
\(W_{11a}^3\) is less than \(8\), then \(p\) cannot be the vertex \(u\) of \(W_{11a}^3\), as shown in Figure~\ref{fig:3}. Moreover, since \(u_3\) cannot 
belong to \(W\), the vertex \(p\) cannot be the vertex \(w\) of \(W_{11a}^3\), as shown in Figure~\ref{fig:3}. Therefore, this case is not possible.

Now assume that \(u_1u_3\in E(M)\). Then the case
\(N_M(v)= C_6\star\{a,b\}\) cannot occur. Indeed, since
\(au_1\in E(M)\), we would have
\(N_M(a)= C_6\star\{u_1,v\}\), which implies that
\(\deg(u_1)\geq 9\), a contradiction.
Suppose that \(N_M(v)= W_8^2\). Then
\(N_M(u_1u_2u_3)=\{x,y\}\), where \(x\) and \(y\) are the degree \(4\)
vertices of \(W_8^2\). There are exactly four distinct paths
\(P_1,P_2,P_3,P_4\) between \(x\) and \(y\) in \(W_8^2\), where two of
these paths contain three vertices and the other two contain four
vertices. Note that \(N_M(u_1u_2)=P_i\cup\{xu_3,yu_3\}\) for some \(i\).
The edges \(u_1u_2\), \(u_2u_3\), and \(u_1u_3\) correspond to the paths
\(P_3,P_1,P_2\), respectively. Therefore, the vertices of the remaining
path \(P_4\), other than \(x\) and \(y\), are adjacent to exactly one of
\(u_1,u_2,u_3\). If \(P_4\) contains two vertices \(p,q\), then
\(N_M(p)=N_{N_M(v)}(p)\star\{v,u_i\}\) and
\(N_M(q)=N_{N_M(v)}(q)\star\{v,u_i\}\). Hence,
\(\deg(u_i)\geq 9\), which is a contradiction. Therefore, \(P_4\) contains
three vertices, and two of \(P_1,P_2,P_3\) contain four vertices. Thus,
we obtain \(M= W_{12}^3\), as depicted in Figure~\ref{fig:3}.

Let $d=4$. By Theorem~\ref{3dim with 10 vertices}, $N_M(v)= W_8^2\star S_2^0$, $C_6\star\{a,b\}\star\{c,d\}$, or $C_5\star C_5$. Every $x\in V(S_2^0)$ and each of $a,b,c,d$ is adjacent to at most one $u_i$ ($i\in\{1,2,3\}$), otherwise its degree exceeds $11$, and to at least one $u_i$, else Corollary~\ref{lem-2} is contradicted. If $u_1u_3\in E(M)$, then in the first case $N_M(x)= W_8^2\star\{v,u_i\}$, giving $\deg(u_i)\ge11$; in the second, we will have $N_M(u_1u_2u_3)= C_6$ by Lemma \ref{prop-5}, hence $N_M(u_iu_j)= C_6 \star \{u_k\}$ for all $i,j,k \in \{1,2,3\}$, $i\neq j\neq k$, a contradiction of Corrolary \ref{lem-2}   and in the third, some $x\in V(C_5\star C_5)$ is adjacent to three $u_i$'s, implying $\deg(x)\ge11$, a contradiction. Hence $u_1u_3\notin E(M)$, so only $u_1u_2,u_2u_3\in E(M)$. If $N_M(v)= W_8^2\star\{a,b\}$, then $a,b$ are adjacent to at most one $u_i$ as mentioned above, so $N_M(u_1u_2),N_M(u_2u_3)\subseteq W_8^2$. Therefore, the vertex $x\in V(W_8^2)$, which has degree $5$ in $W_8^2$, must have degree $11$ in $M$, contradicting the assumption. Hence, this case cannot occur. If $N_M(v)= C_6\star S_4^1$, then every $x\in V(S_4^1)$ is adjacent to at most one $u_i$ as mentioned above, implying $N_M(u_1u_2)\subseteq C_6$, contrary to Lemma~\ref{prop-5}. Finally, if $N_M(v)= C_5\star C_5$, then Theorem~\ref{vertex edge lowerbound} gives $f_0(N_M(u_iu_j))\ge6$, forcing some $x\in V(C_5)$ to satisfy $xu_1,xu_2,xu_3\in E(M)$, and hence $\deg(x)\ge11$, a contradiction.

Let $d=5$ or $6$. By Theorem~\ref{2d+4,d>=4}, $N_M(v)= W_8^2\star S_{2d-6}^{d-4}$, $C_6\star S_{2d-4}^{d-3}$, or $C_5\star C_5\star\ S^{d-5}_{2d-8}$. In each case, every vertex outside $W_8^2$, $C_6$, or $C_5\star C_5$ is adjacent to at most one of $u_1,u_2,u_3$, else the degree bound is violated. Hence, if $u_1u_3\notin E(M)$, then $N_M(u_1u_2)\subseteq W_8^2$ or $C_6$, contradicting Lemma \ref{prop-5} in both cases $d=5,6$ and  $N_M(u_1u_2),N_M(u_2u_3)\subseteq C_5\star C_5$, yields a contradiction for $d=6$ by Lemma \ref{prop-5}, for $d=5$ by lemma \ref{prop-5} $N_M(u_1u_2)=N_M(u_2u_3)= C_5\star C_5$ yielding a vertex of degree $13$, a contradiction. If $u_1u_3\in E(M)$, then $N_M(u_1u_2u_3)\subseteq W_8^2$, $C_6$, or $C_5\star C_5$; for $d=6$ all cases contradict Lemma \ref{prop-5}. Now by Lemma~\ref{prop-5}, equality holds in the first case for $d=5$, yielding a vertex of degree $13.$  The second case contradicts Lemma~\ref{prop-5}, while third case also yielding a vertex of degree $13$. Thus, $d=5$ and $d=6$ are impossible.
\end{proof}

\begin{theorem}\label{2d+6,d>=3}
Let \(M\) be a discrete \(d\)-pseudomanifold with \(2d+6\) vertices, where
\(d\geq 3\). Then, up to isomorphism, \(M\) is one of the discrete
\(d\)-pseudomanifolds obtained in Lemmas \ref{2d+6 d(v)=2d+4},
\ref{2d+6 d(v)=2d+3}, and \ref{2d+6 d(v)=2d+2}.
\end{theorem}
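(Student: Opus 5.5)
The plan is a case split on the maximum vertex degree of $M$. By Lemma~\ref{lem-6}, every vertex $v$ of a discrete $d$-pseudomanifold on $2d+6$ vertices satisfies $\deg(v)\in\{2d,2d+1,2d+2,2d+3,2d+4\}$. Let $\Delta=\max_{x\in V(M)}\deg(x)$. The cases $\Delta=2d+4$, $\Delta=2d+3$, $\Delta=2d+2$ and $\Delta\le 2d+1$ are exhaustive and mutually exclusive. I will handle each by one of the preceding lemmas.

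\textbf{Step 1.} If $\Delta=2d+4$, choose $v$ with $\deg(v)=2d+4$. Lemma~\ref{2d+6 d(v)=2d+4} then gives $M=M^{d-1}_{2d+4}\star S^0_2$.

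\textbf{Step 2.} If $\Delta=2d+3$, then $\deg(x)\le 2d+3$ for all $x$ and some vertex attains $2d+3$. These are exactly the hypotheses of Lemma~\ref{2d+6 d(v)=2d+3}, which yields the list stated there. That list is nonempty only for $3\le d\le 7$; for $d\ge 8$ this case does not occur.

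\textbf{Step 3.} If $\Delta=2d+2$, Lemma~\ref{2d+6 d(v)=2d+2} applies. It forces $d=3$ and $M=C_6\star C_6$ or $M=W^3_{12}$.

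\textbf{Step 4.} If $\Delta\le 2d+1$, then every vertex has degree at most $2d+1$. This contradicts Lemma~\ref{not exist 2d}, since $M$ has $2d+6$ vertices and $d\ge 3$. So this case cannot occur.

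Combining the four steps, $M$ is one of the graphs produced in Lemmas~\ref{2d+6 d(v)=2d+4}, \ref{2d+6 d(v)=2d+3} and \ref{2d+6 d(v)=2d+2}, which is the statement.

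There is no real obstacle left, because all the case analysis has been done in the cited lemmas. The only point to check is that the hypotheses of each lemma match the case exactly. In particular, Lemmas~\ref{2d+6 d(v)=2d+3} and~\ref{2d+6 d(v)=2d+2} need the global degree bounds $\deg(x)\le 2d+3$ and $\deg(x)\le 2d+2$ respectively, and splitting by the maximum degree $\Delta$ is what guarantees them.
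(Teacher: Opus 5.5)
Your proposal is correct and matches the paper: the paper states this theorem without a separate proof, as the immediate consequence of the degree range $\{2d,\dots,2d+4\}$ from Lemma~\ref{lem-6}, of Lemma~\ref{not exist 2d} (which rules out maximum degree at most $2d+1$), and of Lemmas~\ref{2d+6 d(v)=2d+4}, \ref{2d+6 d(v)=2d+3}, \ref{2d+6 d(v)=2d+2} (which cover maximum degree $2d+4$, $2d+3$ and $2d+2$). Splitting by the maximum degree, as you do, is exactly what guarantees the global degree hypotheses of the last two lemmas.
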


\section{Equivalence of Discrete \(d\)-Pseudomanifolds and Flag Normal \(d\)-Pseudomanifolds}
A \emph{simplicial complex} $\Delta$ on a vertex set $V=V(\Delta)$ is a collection of subsets 
$\sigma\subseteq V$, called faces, that is closed under inclusion. For $\sigma\in\Delta$, 
we define the \emph{dimension} of $\sigma$ by \emph{$\dim \sigma:=|\sigma|-1$} and define the 
dimension of $\Delta$, denoted by \emph{$\dim \Delta$}, as the maximum dimension of its 
faces. A \emph{facet} of $\Delta$ is a maximal face with respect to inclusion, and $\Delta$ is 
said to be pure if all of its facets have the same dimension. If $\Delta$ is a 
simplicial complex and $\sigma$ is a face of $\Delta$, then the \emph{link} of $\sigma$ in 
$\Delta$ is defined as 
\emph{$\operatorname{lk}_{\Delta}(\sigma):=\{\tau\setminus\sigma\in\Delta:
\sigma\subseteq\tau\in\Delta\}$}.
If $\Delta$ and $\Delta'$ are two simplicial complexes on disjoint vertex sets, then the 
join of $\Delta$ and $\Delta'$, denoted by $\Delta \star \Delta'$, is the simplicial complex 
on the vertex set $V(\Delta)\sqcup V(\Delta')$ whose faces are given by $\{\sigma\cup\tau:\sigma\in\Delta,\ \tau\in\Delta'\}.$ For a simplicial complex $\Delta$, let $f_0(\Delta)$ and $f_1(\Delta)$ denote the number of vertices and edges of $\Delta$, respectively.

A $d$-dimensional simplicial complex $\Delta$ is called a \emph{simplicial $d$-manifold} (respectively, a \emph{simplicial $d$-sphere}) if its geometric realization $\|\Delta\|$ is homeomorphic to a topological manifold (respectively, to a sphere). In this case, $\Delta$ is said to be a triangulation of $\|\Delta\|$.
A simplicial complex $\Delta$ is called 
\emph{flag} if all minimal non-faces of $\Delta$, also called missing faces, have 
cardinality two. Equivalently, $\Delta$ is the clique complex of its edge graph.
\begin{definition}
{\rm
    A \(d\)-dimensional simplicial complex \(\Delta\) is called a \emph{weak \(d\)-pseudomanifold} if it is pure and every \((d-1)\)-dimensional face (called a \emph{ridge}) of \(\Delta\) is contained in exactly two facets.  

A weak \(d\)-pseudomanifold \(\Delta\) is called a \emph{normal \(d\)-pseudomanifold} if it is \emph{connected}, and the link of each face of dimension at most \(d-2\) is  connected.}
\end{definition}

A vertex of a normal \(d\)-pseudomanifold \(\Delta\) is called \emph{non-singular} if its link is a simplicial \((d-1)\)-sphere, and \emph{singular} otherwise.
From the definition of a normal $d$-pseudomanifold, the following result follows, which is well known in the literature.

\begin{proposition} \label{prop-1}
Let $\Delta$ be a connected pure $d$-dimensional simplicial complex. Then $\Delta$ is a 
\emph{normal $d$-dimensional pseudomanifold} if and only if, for every vertex 
$v\in V(\Delta)$, the link $\operatorname{lk}_{\Delta}(v)$ is a 
$(d-1)$-dimensional normal pseudomanifold.
\end{proposition}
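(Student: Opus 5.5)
The plan is to prove both directions separately, using two standard facts about links. First, for a face $\sigma$ of $\Delta$ and a face $\tau$ of $\operatorname{lk}_\Delta(v)$ with $v\notin\tau$, we have $\operatorname{lk}_{\operatorname{lk}_\Delta(v)}(\tau)=\operatorname{lk}_\Delta(\tau\cup\{v\})$. Second, in a pure $d$-complex the link of every vertex is pure of dimension $d-1$, since every face through $v$ lies in a facet containing $v$.

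\textbf{Forward direction.} Assume $\Delta$ is a normal $d$-pseudomanifold and fix a vertex $v$. Then $\operatorname{lk}_\Delta(v)$ is pure of dimension $d-1$.
\begin{enumerate}
\item[(i)] \emph{Ridges.} A $(d-2)$-face $\tau$ of the link corresponds to the $(d-1)$-face $\tau\cup\{v\}$ of $\Delta$. The facets of the link containing $\tau$ are in bijection with the facets of $\Delta$ containing $\tau\cup\{v\}$, and there are exactly two of these. So the link is a weak $(d-1)$-pseudomanifold.
\item[(ii)] \emph{Connectedness of the link.} This holds because $\{v\}$ has dimension $0\le d-2$ when $d\ge2$.
\item[(iii)] \emph{Links of small faces.} For a face $\tau$ of the link with $\dim\tau\le d-3$, the identity above gives $\operatorname{lk}_{\operatorname{lk}(v)}(\tau)=\operatorname{lk}_\Delta(\tau\cup\{v\})$. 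Here $\dim(\tau\cup\{v\})\le d-2$, so this link is connected.
\end{enumerate}

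\textbf{Converse direction.} Assume $\Delta$ is connected and pure, and every vertex link is a normal $(d-1)$-pseudomanifold.
\begin{enumerate}
\item[(i)] \emph{Ridges.} Take a ridge $\rho$ and a vertex $v\in\rho$. The facets of $\Delta$ containing $\rho$ are in bijection with the facets of $\operatorname{lk}(v)$ containing $\rho\setminus\{v\}$. The face $\rho\setminus\{v\}$ is a ridge of the link, so there are exactly two such facets.
\item[(ii)] \emph{Links of nonempty faces.} For a nonempty face $\sigma$ with $\dim\sigma\le d-2$, pick $v\in\sigma$. Then $\operatorname{lk}_\Delta(\sigma)=\operatorname{lk}_{\operatorname{lk}(v)}(\sigma\setminus\{v\})$, and $\dim(\sigma\setminus\{v\})\le d-3$. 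If $\sigma\setminus\{v\}\neq\emptyset$, this is connected by normality of the link. If $\sigma=\{v\}$, it is the link of $v$ itself, which is connected because normal pseudomanifolds are connected by definition.
\item[(iii)] \emph{The empty face.} Its link is $\Delta$, which is connected by hypothesis.
\end{enumerate}

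The only point needing care is the low-dimensional bookkeeping. One must consistently treat the empty face and $\emptyset$-links, and fix conventions for $d=1$: a normal $0$-pseudomanifold is two points, and connectedness is required only for faces of dimension at most $d-2$. I expect this convention check, not any substantive argument, to be the main obstacle. The case $d=1$ I would verify directly: $\Delta$ is a connected graph in which each vertex lies in exactly two edges, that is, a cycle.
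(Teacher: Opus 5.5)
Your argument is correct and is the routine unwinding of the definitions that the paper has in mind. The paper gives no written proof, only the remark that the statement follows from the definition of a normal pseudomanifold; your identity $\operatorname{lk}_{\operatorname{lk}_\Delta(v)}(\tau)=\operatorname{lk}_\Delta(\tau\cup\{v\})$ and the ridge bijection $F\mapsto F\setminus\{v\}$ supply exactly those details.

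Your caveat about $d=1$ is also needed. Under the paper's literal definition the two-point complex is not connected, so it would not be a normal $0$-pseudomanifold, and the forward direction would fail for cycles. The convention you adopt, requiring connectedness only for links of faces of dimension at most $d-2$ (the empty face included), is what makes the statement hold in that case.
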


Since a flag normal $d$-pseudomanifold is the clique complex of its edge graph, the following result follows inductively.

\begin{proposition} \label{prop-2}
A graph $M$ is a discrete $d$-pseudomanifold if and only if it is the edge graph of a flag normal $d$-pseudomanifold.
\end{proposition}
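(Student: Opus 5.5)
I will prove both directions by induction on $d$, with a stronger statement so that the induction closes: for every $d\ge 1$, the map $M\mapsto \mathrm{Cl}(M)$ (the clique complex of $M$) is a bijection from discrete $d$-pseudomanifolds onto flag normal $d$-pseudomanifolds, with inverse given by taking the edge graph. The key compatibility fact, which I will check first, is purely about cliques: for a graph $M$ and a vertex $v$,
\[
\operatorname{lk}_{\mathrm{Cl}(M)}(v)=\mathrm{Cl}\bigl(N_M(v)\bigr).
\]
A set $\tau\not\ni v$ is a clique with $\tau\cup\{v\}$ a clique if and only if $\tau$ is a clique in the induced subgraph $N_M(v)$. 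Conversely, if $\Delta$ is flag, then $\operatorname{lk}_\Delta(v)$ is flag, and its edge graph is the induced subgraph on the neighbours of $v$, namely $N_{G(\Delta)}(v)$. So links of vertices and neighbourhood graphs correspond under the clique/edge-graph dictionary.

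The base case is $d=1$. A discrete $1$-pseudomanifold is $C_n$ with $n\ge4$, and $\mathrm{Cl}(C_n)$ is the $n$-gon, a connected $1$-complex in which every vertex lies in exactly two edges, i.e.\ a normal $1$-pseudomanifold. It is flag because $C_n$ has no triangles when $n\ge4$. Conversely, a normal $1$-pseudomanifold is a connected graph with every vertex of degree $2$, hence a cycle $C_n$. Flagness rules out $n=3$, since the missing face of the boundary of a triangle has cardinality three. So the flag ones are exactly $C_n$ with $n\ge4$.

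For the inductive step, assume the correspondence holds in dimension $d-1$ and let $M$ be a discrete $d$-pseudomanifold. Each $N_M(v)$ is a discrete $(d-1)$-pseudomanifold, so by induction $\operatorname{lk}(v)=\mathrm{Cl}(N_M(v))$ is a normal $(d-1)$-pseudomanifold. To apply Proposition~\ref{prop-1} to $\mathrm{Cl}(M)$, I must check that $\mathrm{Cl}(M)$ is connected and pure of dimension $d$:
\begin{enumerate}
\item[(i)] Purity: every maximal clique containing $v$ is $v$ plus a maximal clique of $N_M(v)$. Those are facets of a pure $(d-1)$-complex, so they have $d$ vertices, and every vertex lies in some facet.
\item[(ii)] Connectedness: this is where I expect the real work. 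It does not follow from the local definition alone, since a disjoint union of two discrete $d$-pseudomanifolds also has every neighbourhood a discrete $(d-1)$-pseudomanifold. I would argue that the paper's notion tacitly includes connectedness; Lemma~\ref{prop-5} already uses ``since $M$ is connected''. I would state this convention explicitly, or equivalently restrict to connected graphs, and then connectedness of $\mathrm{Cl}(M)$ is immediate from that of $M$.
\end{enumerate}
With (i) and (ii), Proposition~\ref{prop-1} gives that $\mathrm{Cl}(M)$ is a flag normal $d$-pseudomanifold whose edge graph is $M$.

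Conversely, let $\Delta$ be a flag normal $d$-pseudomanifold with edge graph $G$. Each vertex link is a normal $(d-1)$-pseudomanifold by Proposition~\ref{prop-1}, and it is flag. By induction its edge graph, which is $N_G(v)$ by the compatibility fact, is a discrete $(d-1)$-pseudomanifold. Hence $G$ is a discrete $d$-pseudomanifold, and $G$ is connected because $\Delta$ is. Since $\Delta=\mathrm{Cl}(G)$ by flagness, the two constructions are mutually inverse.

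Apart from the connectedness convention, the only delicate point is that purity of dimension exactly $d$ is inherited through links. This needs $d\ge1$ and nonempty links, which hold because every vertex has degree at least $2d$ by Lemma~\ref{lem-6}.
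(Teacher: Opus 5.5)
Your argument is correct and is the same induction the paper compresses into one line: vertex links of a clique complex are the clique complexes of the neighbourhood graphs, a flag link has the induced neighbourhood as its edge graph, and Proposition~\ref{prop-1} closes the step; your purity check and your remark that connectedness must be assumed supply what the paper leaves tacit. One correction to that remark: the connectedness convention has to be built into the recursive definition, so that every discrete $k$-pseudomanifold is connected, because merely restricting $M$ to be connected is not equivalent --- for $d=3$ the graph $\{a,b\}\star(S^2_6\sqcup S^2_6)$ is connected and satisfies the literal definition, yet $N_M(a)$ is disconnected, so the induction hypothesis cannot be applied to $N_M(a)$ and the clique complex of this graph is not normal.
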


If a discrete $d$-pseudomanifold $M$ arises as the edge graph of a flag normal $d$-pseudomanifold $\Delta$, then we say that $M$ represents $\Delta$, and that $\Delta$ corresponds to $M$.
The pseudomanifold version of Theorem~\ref{vertex edge lowerbound} was
proved in \cite[Proposition~2.2]{Athanasiadis}. Similarly, the
pseudomanifold versions of Theorems~\ref{prop-4} and
\ref{2d+3 vertex} were established in \cite[Lemma~3.2]{Zheng2017}.
Moreover, the flag \(2\)-spheres with at most \(9\) vertices obtained from
Theorems~\ref{dim-2,vertex-8} and \ref{2dim 9 vertex} can be verified
using the classification in \cite{Walkup}. Since we have fully classified discrete $d$-pseudomanifolds with at most $2d+6$ vertices, 
we obtain a complete classification of flag normal $d$-pseudomanifolds with at most 
$2d+6$ vertices. Indeed, a flag normal $d$-pseudomanifold is precisely the clique 
complex of its underlying discrete $d$-pseudomanifold. Moreover, every such flag normal 
$d$-pseudomanifold is either a join of simplicial $d$-spheres or an edge expansion of a simplicial $d$-sphere; 
hence, it is a simplicial $d$-sphere. Therefore, we have the following result.

\begin{theorem}\label{thm:flagclassificiation}
Let $\Delta$ be a flag normal $d$-pseudomanifold with at most $2d+6$ vertices. Then 
$\Delta$ is a simplicial $d$-sphere. Moreover, a complete combinatorial 
description of all such spheres is obtained from Corollary \ref{2d+2 vertex} and Theorems \ref{2d+3 vertex}, \ref{dim-2,vertex-8}, \ref{3dim with 10 vertices}, \ref{2d+4,d>=4}, \ref{2dim 9 vertex}, \ref{3 dim 11 vertex}, \ref{4dim with 13 vertices}, \ref{2d+5,d>=5}, \ref{2dim 10 vertex}, and \ref{2d+6,d>=3}.
\end{theorem}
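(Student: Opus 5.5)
By Proposition~\ref{prop-2}, the edge graph $M$ of $\Delta$ is a discrete $d$-pseudomanifold, and since $\Delta$ is flag, $\Delta$ is the clique complex of $M$. Thus $\Delta$ is determined by $M$, and $f_0(M)=f_0(\Delta)\le 2d+6$. By Theorem~\ref{vertex edge lowerbound}(a), $f_0(M)\in\{2d+2,\dots,2d+6\}$. So $M$ is one of the graphs listed in Corollary~\ref{2d+2 vertex} and Theorems \ref{2d+3 vertex}, \ref{dim-2,vertex-8}, \ref{3dim with 10 vertices}, \ref{2d+4,d>=4}, \ref{2dim 9 vertex}, \ref{3 dim 11 vertex}, \ref{4dim with 13 vertices}, \ref{2d+5,d>=5}, \ref{2dim 10 vertex} and \ref{2d+6,d>=3}. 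This gives the combinatorial description. It remains to show that the clique complex of each listed graph is a sphere.

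I would argue by induction on $f_0$ and $d$, using three closure facts.
\begin{enumerate}
\item[(i)] The clique complex of a graph join $G\star H$ is the simplicial join of the two clique complexes. A join of a simplicial $p$-sphere and a simplicial $q$-sphere is a $(p+q+1)$-sphere. Hence all join-type entries are spheres once their factors are: $M^{d-1}\star S^0_2$, $C_a\star C_b$, $C_5\star W$, $C_5\star C_5\star C_5$, $C_5\star C_5\star W^2_8$, $M^{d-2}\star C_5$. The base cases are the cycles $C_n$ (which are $1$-spheres) and the two-dimensional graphs $W^2_{8},W^2_{9a},\dots,W^2_{10i}$. For the latter, every flag normal $2$-pseudomanifold with few vertices is a closed surface with all links cycles, so one checks it is $S^2$. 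I would do this by Euler characteristic: $f_0-f_1+f_2=2$, computed from the pictures, or via the degree sum and the identity $f_2=2f_1/3$.
\item[(ii)] Edge expansion preserves the homeomorphism type. At the complex level, $Exp(M)$ with respect to $v$ and $W$ replaces $\mathrm{st}(v)=v\star\mathrm{lk}(v)$ by the union of $u\star(W_1\cup W)$, $w\star(W_2\cup W)$ and $uw\star W$. Since $\mathrm{lk}(v)$ is a sphere and $W$ is a $(d-2)$-sphere separating it into two sides, both the old and new pieces are $d$-balls with the same boundary $\mathrm{lk}(v)$. So the topology is unchanged, provided the relevant links are spheres, which holds by induction.
\item[(iii)] The remaining entries $W^3_{11a}$, $W^3_{11b}$ and $W^3_{12}$ need separate treatment. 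From their construction in Theorem~\ref{3 dim 11 vertex} and Lemma~\ref{2d+6 d(v)=2d+2}, they arise by contracting an admissible edge (Lemmas~\ref{edge-contraction 1} and \ref{edge-contraction 2}) to a smaller listed graph. Hence they are edge expansions of spheres, and (ii) applies.
\end{enumerate}

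\textbf{Main obstacle.} The hard step is (ii): proving rigorously that edge expansion yields a homeomorphic space. This needs $W$ to split $\mathrm{lk}(v)$ into two closed balls, a Jordan–Schoenflies-type statement that fails topologically in general. For $d\le 4$ it holds because the links have dimension at most $3$. For larger $d$, I would observe from the lists that every non-join entry occurs only in dimension $d\le 6$. In those cases $W$ is itself a join of cycles and octahedral pieces inside a join, so the separation can be exhibited explicitly as a join of a ball with a sphere.

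An alternative for (ii) is to invoke the known fact that an admissible edge contraction in a flag PL-sphere (the link condition $\mathrm{lk}(u)\cap\mathrm{lk}(w)=\mathrm{lk}(uw)$, which is exactly the no-induced-$4$-cycle condition in Lemma~\ref{edge-contraction 1}) preserves PL type. Then spheres are preserved in both directions.
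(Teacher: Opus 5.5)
Your overall route is the paper's. Its entire argument is that, once the classification is transferred through Proposition~\ref{prop-2}, every listed complex is either a join of spheres or an edge expansion of a sphere. You supply what the paper leaves implicit: the Euler-characteristic check for the $2$-dimensional entries, and the requirement that $W$ cut $\mathrm{lk}(v)$ into two balls. You settle the latter by Jordan--Schoenflies/Alexander when $d\le 4$, and by the explicit join structure of $W\subset W_8^2\star C_5$ when $d=5$. That part is fine.

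\textbf{The gap: step (iii) for $W^3_{12}$.} As written, this step does not go through.
\begin{itemize}
\item In the construction in Lemma~\ref{2d+6 d(v)=2d+2}, the three vertices $u_1,u_2,u_3$ outside the closed star of $v$ span a triangle. So Lemma~\ref{edge-contraction 2}, which assumes $u_1u_3\notin E(M)$, does not apply. Lemma~\ref{edge-contraction 1} needs exactly two outside vertices, so it does not apply either.
\item Changing the base vertex does not help. By the case analysis in that lemma, the three non-neighbours of every degree-$8$ vertex of $W^3_{12}$ again form a triangle.
\end{itemize}
You therefore have to exhibit an admissible edge directly.

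\textbf{How to fix it.} An admissible edge does exist.
\begin{itemize}
\item Let $u_k$ be the vertex adjacent to the interior vertex $z$ of $P_4$, and take the edge $u_iu_j$ with $\{i,j,k\}=\{1,2,3\}$.
\item The vertices adjacent to $u_i$ but not to $u_j$ (other than $u_j$ itself) are exactly the interior vertices of the path attached to $u_iu_k$. Symmetrically, those adjacent to $u_j$ but not to $u_i$ are the interior vertices of the path attached to $u_ju_k$.
\item The part of $N_M(u_k)$ lying in $N_M(v)=W_8^2$ is a disc bounded by these two paths. It contains $z$. It cannot contain interior vertices of the path of $u_iu_j$, since those would lie in $N_M(u_1u_2u_3)=\{x,y\}$.
\item Hence the two paths are non-consecutive among the four $x$--$y$ paths. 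No edge of $W_8^2$ joins their interiors, so $u_iu_j$ lies in no induced $4$-cycle.
\item Contracting $u_iu_j$ gives an $11$-vertex discrete $3$-pseudomanifold in which $v$ still has link $W_8^2$, so it is $W^3_{11a}$ or $W^3_{11b}$. Then (ii) applies.
\end{itemize}

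\textbf{A correction to your alternative route.} In a flag complex, the link condition $\mathrm{lk}(u)\cap\mathrm{lk}(w)=\mathrm{lk}(uw)$ holds automatically. The no-induced-$4$-cycle condition is not the link condition; it is what keeps the contracted complex flag. With that correction, the alternative is actually the cleaner route:
\begin{itemize}
\item Every vertex link of $M$ has at most $2(d-1)+6$ vertices. By induction on $d$, carrying ``PL sphere'' rather than ``sphere'' in the hypothesis, $M$ is a PL manifold.
\item The link-condition theorem then makes each admissible contraction a PL homeomorphism onto a smaller listed complex.
\item This needs no Schoenflies-type input at all.
\end{itemize}
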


\begin{remark}{\rm
There exists a flag triangulation of $\mathbb{RP}^{2}$ with $11$ vertices (cf. \cite{Zheng2020}), whose facet 
list is
\[
\begin{aligned}
&ade,\ aef,\ abf,\ bfg,\ bcg,\ cdg,\ dgh,\ adh,\ aik,\ efk,\\
&fgk,\ ghk,\ hik,\ ijk,\ ejk,\ 
abi,\ bij,\ bcj,\ cdj,\ dej.
\end{aligned}
\]
We denote this simplicial $2$-manifold by $\mathbb{RP}^{2}_{11}$. Moreover, this triangulation is minimal with respect to the number of vertices. Hence, 
there exists a simplicial $d$-manifold (respectively, a normal $d$-pseudomanifold) with 
$2d+7$ vertices that is not a simplicial $d$-sphere for $d=2$.

Furthermore, by taking the suspension of this simplicial complex, that is, by taking its join with
\[
\{a_1,b_1\}\star\{a_2,b_2\}\star\cdots\star\{a_{d-2},b_{d-2}\}=S^{d-3}_{2d-4},
\]
we obtain a normal $d$-pseudomanifold that is not a simplicial $d$-sphere. Therefore, the sphere characterization is optimal within the class of flag normal 
$d$-pseudomanifolds, since there exist examples on $2d+7$ vertices that fail to be 
simplicial $d$-spheres.
}    
\end{remark}

\begin{lemma}\label{not exist 2d+2}
For $d \geq 3$, there does not exist a discrete $d$-pseudomanifold $M$ with $2d+7$ vertices such that $\deg(v)\leq 2d+2$ for every vertex $v\in V(M)$.
\end{lemma}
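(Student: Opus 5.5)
By Lemma~\ref{not exist 2d}, since $2d+7\geq 2d+6$, some vertex has degree at least $2d+2$. Under the hypothesis $\deg(x)\le 2d+2$ for all $x$, we may therefore fix a vertex $v$ with $\deg(v)=2d+2$ and argue exactly as in the proof of Lemma~\ref{2d+6 d(v)=2d+2}, now with four outside vertices. So $N_M(v)$ is a discrete $(d-1)$-pseudomanifold on $2d+2$ vertices, and
\[
V(M)=\{v\}\cup V(N_M(v))\cup\{u_1,u_2,u_3,u_4\}.
\]
The possibilities for $N_M(v)$ are as follows. For $d=3$, Theorem~\ref{dim-2,vertex-8} gives $C_6\star\{a,b\}$ or $W_8^2$. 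For $d=4$, Theorem~\ref{3dim with 10 vertices} gives $W_8^2\star S^0_2$, $C_6\star C_4$ or $C_5\star C_5$. For $d\ge 5$, Theorem~\ref{2d+4,d>=4} gives $M^3_{10}\star S^{d-5}_{2d-8}$.

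The key degree observation is this. A vertex $x$ lying in a suspension factor $S^0_2$ of $N_M(v)$ (or in a $C_4$ factor) has $\deg_{N_M(v)}(x)=2d$. Since $xv\in E(M)$, such an $x$ is adjacent to at most one $u_i$. By Corollary~\ref{lem-2} applied to $N_M(x)$, it is also adjacent to at least one $u_i$. Consequently no such $x$ lies in any $N_M(u_iu_j)$. Hence each link $N_M(u_iu_j)$, which is a discrete $(d-2)$-pseudomanifold by Lemma~\ref{lem-3}, lies in the non-octahedral core of $N_M(v)$ together with the other $u_k$'s. Moreover, no triangle $u_iu_ju_k$ can have a common neighbour among the octahedral vertices.

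\textbf{Large $d$.} The core $M^3_{10}$ has dimension $3$ while $N_M(u_iu_j)$ has dimension $d-2$. Lemma~\ref{prop-5} then forces $d\le 5$, after allowing for the at most two other $u_k$ vertices. Lemma~\ref{adjacency of vertices} guarantees that the graph on $\{u_1,\dots,u_4\}$ has no isolated vertex. So there are at least two edges $u_iu_j$, and if there are exactly two they form a matching. For $d=5,6$ I would follow the last paragraph of Lemma~\ref{2d+6 d(v)=2d+2}. Each edge link is forced to equal a whole core factor ($W_8^2$, $C_6$, $C_5\star C_5$ or $M^3_{10}$). Two edges then share many core vertices, and those vertices acquire degree at least $2d+3$, a contradiction.

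\textbf{Case $d=4$.} The same counting applies. Using $C_6\star C_4$ or $W_8^2\star S^0_2$, the link of an edge $u_iu_j$ must lie inside $C_6$ or $W_8^2$ (plus $u_k$'s), which contradicts Lemma~\ref{prop-5} or makes a core vertex have degree $\ge 2d+3$. For $C_5\star C_5$, edge links have at least $6$ vertices, all inside the $10$ core vertices plus possibly $u_k$'s. I would then show some core vertex lies in three of the $u_i$-neighbourhoods, giving degree $\ge 8+1+3=12>10$.

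\textbf{Case $d=3$.} This is the main obstacle and needs a short enumeration. Each vertex has degree $6,7$ or $8$ and $f_0(M)=13$. The links $N_M(u_iu_j)$ are cycles of length at least $4$. When $N_M(v)=C_6\star\{a,b\}$, these cycles must lie in $C_6$ (plus $u_k$), so they are $C_6$ itself or contain $u_k$. Two edges among the $u_i$ then give a $C_6$-vertex of degree at least $9$. When $N_M(v)=W_8^2$, the two degree-$5$ vertices of $W_8^2$ already have degree $6$ in $M$ and can gain at most two more neighbours. The plan is a double count: sum $\deg_M(x)-\deg_{N_M(v)}(x)-1\le 2$ over $x\in V(W_8^2)$, and compare this with the number of incidences forced by the cycle links of the at least two edges among $u_1,\dots,u_4$ and by each $u_i$ having degree at least $6$. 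The expected inequality is $\sum_i \deg(u_i)\ge 24$ edge-ends. Of these, at most $2\cdot 8$ go to $N_M(v)$ and at most $2\binom{4}{2}=12$ go among the $u_i$; this bound is too weak as it stands. So I would sharpen it using the structure from Lemma~\ref{edge-contraction 2}: contract an edge $u_iu_j$ to reduce to the $2d+6=12$-vertex case, where Lemma~\ref{2d+6 d(v)=2d+2} forces $C_6\star C_6$ or $W^3_{12}$. Then I would check that no edge expansion of these keeps all degrees at most $8$ while adding the fourth outside vertex. That last verification is where I expect the real work to be.
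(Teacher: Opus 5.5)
Your setup matches the paper's: a vertex $v$ with $\deg(v)=2d+2$, four outside vertices $u_1,\dots,u_4$, and each octahedral vertex of $N_M(v)$ meeting exactly one $u_i$, so that edge links $N_M(u_iu_j)$ live in the core plus the other $u_k$. However, the proposal does not prove the lemma. The case $d=3$, $N_M(v)=W_8^2$ is left open, and the reduction you sketch for it does not work.

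\textbf{The reduction for $d=3$, $N_M(v)=W_8^2$.} Lemma~\ref{edge-contraction 2} is stated and proved only for exactly three outside vertices with $u_1u_3\notin E(M)$. With four outsiders, an edge $u_iu_j$ can lie in an induced $4$-cycle through the other two. For example, if $u_1,\dots,u_4$ induce a $4$-cycle, contracting any edge among them enlarges the link of the opposite edge, contradicting Lemma~\ref{prop-5}. More seriously, even a legitimate contraction creates a vertex of degree $\deg(u_i)+\deg(u_j)-f_0(N_M(u_iu_j))-2$, and nothing prevents this from being $9$ or $10$. So the resulting $12$-vertex graph need not satisfy the hypothesis $\deg\le 8$ of Lemma~\ref{2d+6 d(v)=2d+2}. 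You would then have to run through the whole $12$-vertex list ($M^2_{10}\star S^0_2$, $C_7\star C_5$, the $Exp(W^2_{9j}\star S^0_2)$, \dots), and the final ``check all expansions'' step is still not carried out.

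The paper avoids contraction here altogether. It splits on the graph induced by $\{u_1,\dots,u_4\}$: triangle-free; $K_4$; a triangle with $u_4$ joined to two of its vertices; a triangle with a pendant edge. It then does degree bookkeeping in $W_8^2$, which has four vertices of degree $4$ (each can meet at most three $u_i$) and four of degree $5$ (each can meet at most two $u_i$), not two as you wrote. In particular, the vertices of $N_M(v)$ in any triangle link $N_M(u_iu_ju_k)$ must have degree $4$ in $W_8^2$. A short count in each configuration then yields a vertex of degree at least $9$.

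\textbf{Triangles among the $u_i$ in your other cases.} The same neglect of triangles affects the remaining cases.
\begin{itemize}
\item For $N_M(v)=C_6\star\{a,b\}$, the claim that two edges give a $C_6$-vertex of degree at least $9$ fails when the edges share an endpoint; the count is then exactly $5+3=8$. You also do not treat links $N_M(u_iu_j)$ that contain some $u_k$. In the triangle-free case the right argument is that every $u_i$ lies on an edge, so each $C_6$-vertex meets all four $u_i$. The triangle case needs separate work, e.g.\ via $N_M(a)=C_6\star\{v,u_i\}$.
\item For $d\ge 4$, Lemma~\ref{prop-5} cannot be applied to an edge link that contains some $u_k$, since that link is not an induced subgraph of the core. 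You must pass to $N_M(u_iu_ju_k)$, or to $N_M(u_1u_2u_3u_4)$ if the $u_i$ form a $K_4$, which does lie in the core.
\item Consequently, your assertions that $d\le 5$ follows directly and that each edge link equals a whole core factor hold only in the triangle-free case.
\item The triangle case is not automatically a degree overflow. For $d=4$ and $N_M(v)=C_6\star C_4$, a triangle with $N_M(u_1u_2u_3)=C_6$ only forces the $C_6$-vertices to have degree $10=2d+2$, so $u_4$ must be brought in.
\item Minor: a vertex of $C_5\star C_5$ has degree $7$ there, so your bound should read $7+1+3=11$, which still exceeds $10$.
\end{itemize}
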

\begin{proof}
By Lemma~ \ref{not exist 2d}, it is impossible that $deg(x)\leq 2d+1$ for every $x\in V(M)$. Hence there exists $v\in V(M)$ with $deg(v)=2d+2$. Let $u_1,\ldots,u_4\in V(M)\setminus \bigl(V(N_M(v))\cup\{v\}\bigr)$ so that $V(M)=\{v\}\cup V(N_M(v))\cup\{u_1,\ldots,u_4\}$. For $d\geq 4$, we have $N_M(v)= M_{10}^3\star S_{2d-8}^{d-5}$, and by Theorem~\ref{3dim with 10 vertices}, $N_M(v)= W_8^2\star S_2^0\star S_{2d-8}^{d-5}$, $C_6\star C_4\star S_{2d-8}^{d-5}$, or $C_5\star C_5\star S_{2d-8}^{d-5}$. Any vertex of $N_M(v)$ outside $W_8^2$, $C_6$, or $C_5\star C_5$ can be adjacent to at most one of $u_1,\ldots,u_4$, since otherwise its degree would be at least $2d+3$, a contradiction. Moreover, any vertex of $W_8^2$, $C_6$, or $C_5\star C_5$ can be adjacent to at most three of $u_1,\ldots,u_4$, by the degree bound $deg(x)\leq 2d+2$. For $d=3$, Theorem~\ref{dim-2,vertex-8} gives $N_M(v)= C_6\star S_2^0$ or $W_8^2$. In the former case, each $x\in V(S_2^0)$ is adjacent to at most one of $u_1,\ldots,u_4$, otherwise $deg(x)\geq 9$, a contradiction; similarly, every vertex of $W_8^2$ or $C_6$ is adjacent to at most three of $u_1,\ldots,u_4$. By Lemma \ref{lem-2}, for each fixed $i$, there will exist some $j$ such that $u_iu_j \in E(M)$, $i,j \in\{1,2,3,4\}.$

 \noindent   \textbf{Case 1:} Suppose \(\{u_i,u_j,u_k\}\) does not form a triangle for any distinct \(i,j,k \in \{1,2,3,4\}\). Then, for \(d\geq 4\), it follows from the above discussion that, for every edge \(u_i u_j\), the graph \(N_M(u_i u_j)\subseteq W_8^2\), \(C_6\), or \(C_5\star C_5\), according as \(N_M(v)\) is of the first, second, or third type. Now, for \(d\geq 4\), \(N_M(u_i u_j)\subseteq C_6\) is not possible, as it contradicts Lemma \ref{prop-5}. Also, \(N_M(u_i u_j)\subseteq W_8^2\) implies \(d=4\), and then \(N_M(u_i u_j)=W_8^2\) by Lemma \ref{prop-5}. Hence, there exists a vertex in \(W_8^2\) which is adjacent to all four vertices \(u_1,\ldots,u_4\), contradicting the fact that any vertex of \(N_M(v)\) can be adjacent to at most three of them. Let \(N_M(u_i u_j)\subset C_5 \star C_5\). Then \(d=4\) or \(d=5\). If \(d=5\), then \(N_M(u_i u_j)=C_5 \star C_5\) for all \(u_i u_j \in E(M)\). Hence, there exists a vertex in \(C_5 \star C_5\) which is adjacent to all four vertices \(u_1,\ldots,u_4\), contradicting the fact that any vertex of \(N_M(v)\) can be adjacent to at most three of them. Now, in this case, if \(d=4\), then \(N_M(u_i u_j)\subseteq C_5 \star C_5\) and \(f_0(N_M(u_i u_j))\geq 6\). Hence, there exists at least one \(x\in V(C_5 \star C_5)\) such that \(x\) is adjacent to three vertices among \(u_1,\dots,u_4\). Then \(\deg(x)\geq 11\), which is a contradiction.
    
Let \(d=3\). If \(N_M(v)= W_8^2\), then \(N_M(u_i u_j)\subseteq W_8^2\) for every \(u_i u_j\in E(M)\). Then \(N_M(u_i u_j)\) is an induced \(4\)-cycle or \(5\)-cycle in \(W_8^2\). Hence, either there exists a vertex \(x\in W_8^2\) that has degree \(4\) in \(W_8^2\) and is adjacent to all four \(u_i\), or there exists a vertex \(x\in W_8^2\) that has degree \(5\) in \(W_8^2\) and is adjacent to at least three \(u_i\), contradicting \(\deg(x)\leq 8\).

Let \(d=3\) and \(N_M(v)=C_6\star S_2^0\). Then \(N_M(u_i u_j)=C_6\) for all possible edges \(u_i u_j\). Hence, for every vertex \(x\in C_6\), \(\deg(x)\geq 9\), which is a contradiction.
    
Therefore, there exist distinct \(i,j,k\in\{1,2,3,4\}\) such that \(u_i u_j,u_j u_k,u_k u_i\in E(M)\). Without loss of generality, assume that \(u_1u_2,u_2u_3,u_3u_1\in E(M)\). By Lemma~\ref{adjacency of vertices}, we may further assume that \(u_2u_4\in E(M)\).

 \noindent  \textbf{Case 2:} Suppose that \(u_1u_4,u_3u_4\in E(M)\).  If $d\geq 4$, then \(N_M(u_1u_2u_3u_4)\subseteq N_M(v)\), contradicting the fact that no vertex of \(N_M(v)\) can be adjacent to all four vertices \(u_1,u_2,u_3,u_4\). Thus, this case is impossible. 
 
Let \(d=3\) and \(N_M(v)=C_6\star\{a,b\}\). Let \(d=3\) and \(N_M(v)=C_6\star\{a,b\}\). Then \(\deg(a)\leq 8\) and 
\(v\star C_6\subseteq N_M(a)\) together imply that $N_M(a)=C_6\star\{v,u_i\}$
for some \(i\). Consequently, \(\deg(u_i)\geq 9\), which is a contradiction.

Let \(d=3\) and \(N_M(v)=W_8^2\). Then \(N_M(u_iu_ju_k)\) contains exactly one vertex of \(N_M(v)\) having degree \(4\) in \(N_M(v)\). Moreover, for distinct triples \(i,j,k\), the corresponding degree \(4\) vertices in \(N_M(v)\) are distinct. Furthermore, \(N_M(u_iu_j)\) contains at least one vertex of degree \(5\) in \(N_M(v)\) in order to form a cycle. Since there are six edges and four vertices of degree \(5\) in \(N_M(v)\), at least one degree \(5\) vertex of \(N_M(v)\) must be adjacent to three of \(u_1,\ldots,u_4\). Hence, this vertex has degree \(9\) in \(M\), which is a contradiction.

 \noindent  \textbf{Case 3:} Suppose \(u_1u_4\in E(M)\) and \(u_3u_4\notin E(M)\). For \(d\geq 4\), our previous analysis implies that \(N_M(u_1u_2u_3)\) and \(N_M(u_1u_2u_4)\) are contained in \(W_8^2\), \(C_6\), or \(C_5\star C_5\). Let \(N_M(u_1u_2u_3), N_M(u_1u_2u_4)\subseteq W_8^2\). Then, there exists \(x\in W_8^2\) with degree 5 in $W_8^2$ such that \(\deg(x)\geq 2d+3\), which is a contradiction. Let \(N_M(u_1u_2u_3), N_M(u_1u_2u_4)\subseteq C_6\). Then \(d=4\), and by Lemma \ref{prop-5}, \(N_M(u_1u_2u_3)=N_M(u_1u_2u_4)=C_6\), a contradiction because then \(\deg(x)\geq 11\) for all \(x\in V(C_6)\). Now suppose \(N_M(u_1u_2u_3), N_M(u_1u_2u_4)\subseteq C_5\star C_5\). But this is not possible, since for every \(x\in V(C_5\star C_5)\), \(x\) can be adjacent to at most two vertices among \(u_1,\dots,u_4\).

Let \(d=3\) and \(N_M(u_1u_2u_3), N_M(u_1u_2u_4) \subseteq C_6\). Let \(N_M(u_1u_2u_3)=\{v_1,v_2\}\) and \(N_M(u_1u_2u_4)=\{v_3,v_4\}\), where $v_1,\dots,v_4$ are four distinct vertices in $C_6$. Therefore, \(N_M(u_1u_2)\) is a \(6\)-cycle $C'_6$ containing \(v_1,\dots,v_4,u_3,u_4\).

If \(\deg(u_1)=7\) then  \(N_M(u_1)=C'_6\star \{u_2\}\), a contradiction. Since \(\deg(u_1)\leq 8\), \(N_M(u_1)\) contains exactly one vertex, say \(y\), of \(C_6\) other than \(v_1,\dots,v_4\). Therefore, \(N_M(u_1)=C'_6\star \{u_2, y\}\). Then \(y\) is a part of \(N_M(u_1u_3)\) and \(N_M(u_1u_4)\). Since \(y\) is not adjacent to \(u_2\), it follows that \(N_M(u_2u_3), N_M(u_2u_4)\) contain three more vertices from \(C_6\). Hence, \(\deg(u_2)\geq 9\), a contradiction.

Let \(d=3\) and \(N_M(u_1u_2u_3), N_M(u_1u_2u_4) \subseteq W_8^2\). Let \(N_M(u_1u_2u_3)=\{v_1,v_2\}\) and \(N_M(u_1u_2u_4)=\{v_3,v_4\}\). Since \(\deg(x)\leq 8\) for every vertex \(x\in V(M)\), the vertices \(v_1,\dots,v_4\) are distinct degree \(4\) vertices in \(W_8^2\). Therefore, \(N_M(u_1u_2)\) is a \(6\)-cycle $C'_6$ containing \(v_1,\dots,v_4,u_3,u_4\). 
If \(\deg(u_1)=7\) then  \(N_M(u_1)=C'_6\star \{u_2\}\), a contradiction. Since \(\deg(u_1)\leq 8\), \(N_M(u_1)\) contains exactly one vertex, say \(y\), of  \(W_8^2\) other than \(v_1,\dots,v_4\). Therefore, \(N_M(u_1)=C'_6\star \{u_2, y\}\). Then \(y\) has degree \(5\) in \(W_8^2\), and it is a part of \(N_M(u_1u_3)\) and \(N_M(u_1u_4)\). Hence, \(\deg(y)\geq 9\), a contradiction. 

 \noindent \textbf{Case 4:} Suppose \(u_1u_4,u_3u_4\notin E(M)\). For \(d\geq 4\), \(N_M(u_1u_2u_3)\) and \(N_M(u_2u_4)\) are contained in \(W_8^2\), \(C_6\), or \(C_5\star C_5\), yielding the same contradiction. For \(d=3\), $N_M(u_1u_2u_3), N_M(u_2u_4)$ are contained in \(W_8^2\) or \(C_6\).

Let \(d=3\) and \(N_M(u_1u_2u_3), N_M(u_2u_4) \subseteq C_6\). Then \(N_M(u_2u_4)=C_6\). Let \(N_M(u_1u_2u_3)=\{v_1,v_2\}\), where \(v_1,v_2\in C_6\). Then \(\deg(v_1)\geq 9\), a contradiction.

Let \(d=3\) and \(N_M(u_1u_2u_3), N_M(u_2u_4) \subseteq W_8^2\). Let \(N_M(u_1u_2u_3)=\{v_1,v_2\}\), where \(v_1,v_2\) are distinct degree \(4\) vertices in \(W_8^2\). Since \(v_1,v_2\) are not adjacent to \(u_4\), \(N_M(u_2u_4)\) is a cycle in \(W_8^2\) containing at least four vertices different from \(v_1,v_2\). Therefore, \(\deg(u_2)\geq 9\), a contradiction.
\end{proof}

\begin{theorem}\label{thm:flag-2d+7}
For $d\geq 2$, let $\Delta$ be a flag normal $d$-pseudomanifold with at most $2d+7$ vertices. Then either $\Delta$ is a simplicial $d$-sphere or a flag triangulation of the $(d-2)$-fold suspension of $\mathbb{RP}^2$.
\end{theorem}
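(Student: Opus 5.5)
By Theorem~\ref{thm:flagclassificiation}, a flag normal $d$-pseudomanifold with at most $2d+6$ vertices is already a simplicial $d$-sphere. So it suffices to treat $f_0(\Delta)=2d+7$. By Proposition~\ref{prop-2}, the edge graph $M$ of $\Delta$ is a discrete $d$-pseudomanifold on $2d+7$ vertices, and $\Delta$ is its clique complex.

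For $d=2$, the link of every vertex is a cycle, so $\Delta$ is a flag closed surface on $11$ vertices. I would argue via Euler characteristic and the classification of surfaces. Normality in dimension $2$ gives that $\Delta$ is a connected closed surface. The only flag surfaces with $11$ vertices are $S^2$ and $\mathbb{RP}^2$: every other surface needs more vertices in the flag setting. I expect this to need a citation to the known minimal vertex counts of flag triangulations (e.g.\ \cite{Zheng2020}), which I would quote rather than reprove.

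For $d\geq 3$, I would do induction on $d$ using the degree case split already set up. By Lemma~\ref{lem-6}, $\deg(v)\le 2d+5$ for all $v$, and by Lemma~\ref{not exist 2d+2} some vertex $v$ has $\deg(v)\ge 2d+3$.

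\textbf{Case $\deg(v)=2d+5$.} Lemma~\ref{lem-4} gives $M=M^{d-1}_{2d+5}\star S^0_2$. By induction, the clique complex of $M^{d-1}_{2d+5}=M^{d-1}_{2(d-1)+7}$ is a sphere or a $(d-3)$-fold suspension of $\mathbb{RP}^2$. Its suspension is then a sphere or a $(d-2)$-fold suspension of $\mathbb{RP}^2$.

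\textbf{Case $\max\deg=2d+4$.} Here $N_M(v)$ has $2d+4=2(d-1)+6$ vertices, so its clique complex is a $(d-1)$-sphere by Theorem~\ref{thm:flagclassificiation}. Exactly two vertices $u,w$ lie outside $\{v\}\cup V(N_M(v))$. By Lemma~\ref{edge-contraction 1}, $uw\in E(M)$ and contracting $uw$ yields a discrete $d$-pseudomanifold $M'$ on $2d+6$ vertices. Its clique complex is a sphere, again by Theorem~\ref{thm:flagclassificiation}. Hence $\Delta$ is obtained from a sphere by an edge expansion. I would check that an edge expansion along a $(d-2)$-sphere $W$ separating a $(d-1)$-sphere link preserves the PL/topological type. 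This is a standard stellar-type move: the reverse contraction of an edge whose link is exactly $W$ satisfies the link condition and is a homeomorphism.

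\textbf{Case $\max\deg=2d+3$.} There are three outside vertices $u_1,u_2,u_3$, and by Lemma~\ref{adjacency of vertices} at least two edges among them. If $u_1u_3\notin E(M)$, Lemma~\ref{edge-contraction 2} lets me contract $u_1u_2$ and reduce to the $2d+6$ classification. The contracted edge must satisfy the link condition, which the lemma's no-induced-$4$-cycle argument supplies, so the homeomorphism type is preserved.

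If $u_1u_2u_3$ is a triangle, I would mimic Lemma~\ref{2d+6 d(v)=2d+2}. Write $N_M(v)$ via Theorem~\ref{2d+5,d>=5} (or Theorems~\ref{3 dim 11 vertex}, \ref{4dim with 13 vertices} for small $d$). Vertices in suspension factors are adjacent to at most one $u_i$. Lemma~\ref{prop-5} either forces a join decomposition $M=N\star C$ with $C$ a cycle, which is a join of spheres or of lower suspensions of $\mathbb{RP}^2$ by induction, or forces a contractible edge as before.

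The main obstacle is this last subcase, together with the claim that every edge contraction used preserves the topological type. I would handle both uniformly by showing the contracted edge $xy$ always has $N_M(xy)=N_M(x)\cap N_M(y)$ with no induced $4$-cycle through it, i.e.\ $\operatorname{lk}(xy)=\operatorname{lk}x\cap\operatorname{lk}y$. This is the link condition guaranteeing the contraction is a homeomorphism. It then remains to verify that singular vertices, whose links are suspensions of $\mathbb{RP}^2$, never get created or destroyed except through the suspension case.
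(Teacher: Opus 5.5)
Your skeleton matches the paper's proof: the reduction to $2d+7$ vertices, the citation for $d=2$, Lemma~\ref{not exist 2d+2}, suspension plus induction when $\deg v=2d+5$, and contraction of $uw$ by Lemma~\ref{edge-contraction 1} when the maximum degree is $2d+4$. Your treatment of the path subcase of maximum degree $2d+3$, via Lemma~\ref{edge-contraction 2} and the $2d+6$ classification, is a legitimate variant.

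The genuine gap is the triangle subcase $u_1u_2,u_2u_3,u_1u_3\in E(M)$. You only sketch it, and the sketch fails. Suppose $u_1xyu_2$ is an induced $4$-cycle. Then $x,y\in V(N_M(v))$, and Corollary~\ref{lem-2} together with Lemma~\ref{lem-4} gives $N_M(xy)=N_M(vxy)\star\{v,u_3\}$. The argument of Lemma~\ref{edge-contraction 2} now yields only $u_1u_3\in E(M)$, which is true here rather than contradictory.

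Such $4$-cycles can pass through all three edges of the triangle at once. So there need be no ``contractible edge as before'', and $M$ need not be a join. Here is an example with $d=3$:
\begin{itemize}
\item Let $L$ be the flag $2$-sphere on $p,q,z,x_1,x_2,y_1,y_2,w_1,w_2$ with facets $px_1y_1$, $x_1y_1y_2$, $x_1x_2y_2$, $x_2qy_2$, $zqx_2$, $pzw_1$, $zx_2w_1$, $x_1x_2w_1$, $px_1w_1$, $pzy_1$, $zqw_2$, $qy_2w_2$, $y_1y_2w_2$, $zy_1w_2$.
\item Cone a vertex $v$ over $L$.
\item Add a triangle $u_1u_2u_3$ with $N(u_1)\cap L=\{p,q,z,x_1,x_2,w_1\}$, $N(u_2)\cap L=\{p,q,z,y_1,y_2,w_2\}$ and $N(u_3)\cap L=\{p,q,x_1,x_2,y_1,y_2\}$.
\end{itemize}
This is a discrete $3$-pseudomanifold on $13$ vertices whose maximum degree $9$ is attained only at $v$ and $z$. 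Hence it is neither a suspension nor of the form $C_k\star C_{13-k}$. Moreover, $u_1x_1y_1u_2$, $u_1zy_1u_3$ and $u_2zx_2u_3$ are induced $4$-cycles, so contracting any edge of the triangle leaves the class of discrete pseudomanifolds.

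The missing idea is how the paper handles the whole maximum-degree-$(2d+3)$ case uniformly: destroy these $4$-cycles by subdividing inside the link of $v$.
\begin{itemize}
\item For an induced $4$-cycle $u_1xyu_2$, subdivide the edge $xy$ of $N_M(v)$. The new vertex is joined to $x$, $y$ and $N_M(xy)$. So it lies in the new link of $v$ and is adjacent to neither $u_1$ nor $u_2$, and the number of induced $4$-cycles through $u_1u_2$ strictly drops.
\item When none remain, contract $u_1u_2$ to a vertex $u$. Now only $u$ and $u_3$ lie outside the closed neighbourhood of $v$, so Lemma~\ref{edge-contraction 1} lets you also contract $uu_3$.
\item The result is the suspension of an edge-subdivided copy of $N_M(v)$. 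Since $N_M(v)$ has $2(d-1)+5$ vertices, it is a sphere by Theorem~\ref{thm:flagclassificiation}.
\item $M$ is recovered from this suspension by two edge expansions followed by undoing the subdivisions.
\end{itemize}

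Two smaller points. First, in a flag complex $\operatorname{lk}(xy)=\operatorname{lk}x\cap\operatorname{lk}y$ holds automatically. The no-induced-$4$-cycle condition is therefore not the link condition; it is what keeps the contracted complex flag. Second, your final worry about singular vertices is unnecessary. Once the maximum degree is at most $2d+4$, every vertex link has at most $2(d-1)+6$ vertices and is already a sphere.
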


\begin{proof}
If $\Delta$ has $2d+6$ vertices, then, by Theorem~\ref{thm:flagclassificiation}, $\Delta$ is a simplicial $d$-sphere. Assume now that $\Delta$ has $2d+7$ vertices.

We prove the result by induction on the dimension $d$. For $d=2$, it is known that every flag normal $2$-pseudomanifold with at most $11$ vertices is either a simplicial $2$-sphere or a flag triangulation of $\mathbb{RP}^2$ (cf. \cite{Zheng2020}). Therefore, assume that $d\geq 3$ and that the result holds for all dimensions less than $d$.

Let $M$ be the discrete $d$-pseudomanifold corresponding to $\Delta$, that is, the edge graph of $\Delta$. By Lemma~\ref{lem-6}, the degree of each vertex $v\in V(M)$ satisfies $\deg(v)\in \{2d+5,\,2d+4,\,2d+3,\,2d+2,\,2d+1,\,2d\}.$
Furthermore, by Lemma \ref{not exist 2d+2}, the case $\deg(v)\leq 2d+2$ cannot occur. Hence,
\[
\deg(v)\in \{2d+5,\,2d+4,\,2d+3\},
\]
for every vertex $v\in V(M)$.

\noindent\textbf{Case 1.} Suppose there exists a vertex $v\in V(M)$ such that $\deg(v)=2d+5$. Then, by Lemma~\ref{lem-4},
\[
M = M^{d-1}_{2d+5}\star S^0_2.
\]
Consequently, $\Delta$ is the suspension of a flag normal $(d-1)$-pseudomanifold on $2d+5$ vertices. Therefore, the desired conclusion follows from the induction hypothesis.

\noindent \textbf{Case 2:} Suppose that $\deg(v)\leq 2d+4$ for all $v\in V(M)$, and that there exists a vertex $v\in V(M)$ with $\deg(v)=2d+4$. Then there exist vertices $u,w\in V(M)\setminus V(N_M(v))$ such that
$V(M)=\{v\}\cup V(N_M(v))\cup\{u,w\}.$
By Lemma~\ref{adjacency of vertices}, we have $uw\in E(M)$. Therefore, by Lemma~\ref{edge-contraction 1}, the edge $uw$ can be contracted, yielding a discrete $d$-pseudomanifold $M'$ with $2d+6$ vertices. Let $\Delta'$ be the flag normal $d$-pseudomanifold corresponding to $M'$. By Theorem~\ref{thm:flagclassificiation}, $\Delta'$ is a simplicial $d$-sphere. Since $\Delta'$ is obtained from $\Delta$ by contracting an edge, $\Delta$ is itself a simplicial $d$-sphere.

\noindent \textbf{Case 3:} 
Suppose that $\deg(v)\leq 2d+3$ for all $v\in V(M)$, and that there exists a vertex $v\in V(M)$ with $\deg(v)=2d+3$. Then there exist vertices
$u_1,u_2,u_3\in V(M)\setminus V(N_M(v))$ such that
$V(M)=\{v\}\cup V(N_M(v))\cup\{u_1,u_2,u_3\}.$
By Lemma~\ref{adjacency of vertices}, assume that $u_1u_2,\; u_1u_3\in E(M).$

Suppose that \(u_1u_2\) is contained in an induced \(4\)-cycle
\(C_4(u_1,x,y,u_2)\). If \(y=u_3\), then \(u_1u_3\notin E(M)\). Moreover,
$V(N_M(x))= \{v\} \cup V(N_{N_M(x)}(v))\cup\{u_1,u_3\}.$
Therefore, by Lemma~\ref{adjacency of vertices}, we have
\(u_1u_3\in E(N_M(x))\), which is a contradiction. Since $u_1$ and $u_2$ are not adjacent to $v$, we have
$x,y\in V(N_M(v))$, and hence $xy\in E(N_M(v))$. Subdivide the edge $xy$.
Repeating this procedure whenever $u_1u_2$ is contained in an induced $4$-cycle,
we eventually obtain a discrete $d$-pseudomanifold in which $u_1u_2$ is not contained in any induced $4$-cycle. We may then contract the edge $u_1u_2$; let $u$ denote the resulting vertex and let $M'$ be the resulting discrete $d$-pseudomanifold.

Then $u,u_3\notin V(N_{M'}(v))$, $uu_3\in E(M')$, and
$V(M')=\{v\}\cup V(N_{M'}(v))\cup\{u,u_3\}.$
Therefore, by Lemma~\ref{edge-contraction 1}, the edge $uu_3$ can be contracted, yielding a discrete $d$-pseudomanifold $M''$. Moreover, $M''$ is the suspension of $N_{M'}(v)$.

Since $N_{M'}(v)$ is obtained from $N_M(v)$ by a sequence of edge subdivisions, and $N_M(v)$ represents a simplicial $(d-1)$-sphere, it follows that $N_{M'}(v)$ also represents a simplicial $(d-1)$-sphere. Consequently, $M''$ represents a simplicial $d$-sphere. As $M$ is obtained from $M''$ by a sequence of edge expansions and edge contractions, $M$ represents simplicial $d$-spheres. Therefore, $\Delta$ is a simplicial $d$-sphere.
%
%
\end{proof}

Note that, in Case 3 of Theorem~\ref{thm:flag-2d+7}, we can further apply edge contractions to $M''$ along all the new edges in $N_{M'}(v)$ until we obtain $N_M(v)$. This process yields a new discrete $d$-pseudomanifold $M'''$, which is the suspension of $N_{M}(v)$. Moreover, $M'''$ has $2d+5$ vertices. Therefore, we obtain the following result.

\begin{corollary}
For $d\geq 3$, let $M$ be a discrete $d$-pseudomanifold with $2d+7$ vertices. Then either
\[
M = M^{d-1}_{2d+5}\star S^0_2,
\]
or $M$ is obtained from $M^{d-1}_{n}$ by a sequence of edge contractions and edge expansions, where $n\leq 2d+6$.
\end{corollary}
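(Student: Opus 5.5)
The corollary should follow by rerunning the case analysis in the proof of Theorem~\ref{thm:flag-2d+7}, this time keeping track of the graph operations instead of the topology. Let $M$ be a discrete $d$-pseudomanifold with $2d+7$ vertices and $d\geq 3$. By Lemma~\ref{lem-6}, together with Lemma~\ref{not exist 2d+2} (which rules out $\deg(v)\leq 2d+2$ for every vertex), the maximum degree of $M$ is $2d+5$, $2d+4$ or $2d+3$. I would split into these three cases, exactly as in that proof.

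\textbf{Maximum degree $2d+5$.} Here Lemma~\ref{lem-4} gives $M=M^{d-1}_{2d+5}\star S^0_2$ directly. This is the first alternative of the statement.

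\textbf{Maximum degree $2d+4$.} Choose $v$ with $\deg(v)=2d+4$. Then exactly two vertices $u,w$ lie outside $V(N_M(v))\cup\{v\}$. By Lemma~\ref{edge-contraction 1}, $uw\in E(M)$, and contracting $uw$ gives a discrete $d$-pseudomanifold $M'$ on $2d+6$ vertices. Since contraction is the inverse of edge expansion, $M=Exp(M')$. So $M$ is obtained from $M^{d}_{2d+6}$ by a single expansion, giving the second alternative with $n=2d+6$.

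\textbf{Maximum degree $2d+3$.} Choose $v$ with $\deg(v)=2d+3$, and let $u_1,u_2,u_3$ be the three remaining vertices, with $u_1u_2,u_1u_3\in E(M)$ by Lemma~\ref{adjacency of vertices}. As in Case 3 of Theorem~\ref{thm:flag-2d+7}:
\begin{enumerate}
\item[(i)] Whenever $u_1u_2$ lies in an induced $4$-cycle $C_4(u_1,x,y,u_2)$, we have $x,y\in V(N_M(v))$ and $xy\in E(N_M(v))$. Subdivide $xy$; this is an edge expansion. Repeat until $u_1u_2$ lies in no induced $4$-cycle.
\item[(ii)] Contract $u_1u_2$ to a vertex $u$, obtaining $M'$.
\item[(iii)] In $M'$ the only vertices outside $N_{M'}(v)\cup\{v\}$ are $u$ and $u_3$, so Lemma~\ref{edge-contraction 1} lets us contract $uu_3$. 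This gives $M''=N_{M'}(v)\star S^0_2$.
\item[(iv)] Contract, one at a time, the edges created by the subdivisions in (i). Each is a subdivision edge, so contracting it reverses an expansion. This returns $N_{M'}(v)$ to $N_M(v)$ and yields $M'''=N_M(v)\star\{v,p\}$, which has $2d+5$ vertices.
\end{enumerate}
Reading the chain backwards, $M$ is obtained from $M^d_{2d+5}=M'''$ by a sequence of expansions and contractions, so $n=2d+5\leq 2d+6$.

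The main obstacle is step (iv): every intermediate graph must remain a discrete $d$-pseudomanifold. I would handle this by observing that each contraction in (iv) undoes a subdivision made earlier. A subdivision $Exp$ with respect to $W=N(vx)$ is inverted by contracting the new edge, and that new edge has link exactly $W$, so it is not in an induced $4$-cycle. Contractions of later subdivisions commute with the earlier ones, because $u_1,u_2$ have already been merged away and the subdivided edges lie in $N(v)$. Hence performing the contractions in reverse order of creation keeps every intermediate graph a discrete $d$-pseudomanifold.

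If I could not secure this bookkeeping, I would fall back on $M''$ itself. It is also a suspension and is connected to $M$ by expansions and contractions. It would fit the statement whenever $f_0(M'')\leq 2d+6$, that is, when at most one subdivision was needed. I would therefore also try to bound the number of subdivisions in (i) using the degree bound $\deg\leq 2d+3$.
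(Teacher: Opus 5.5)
This is correct and is essentially the paper's own argument: you use the same three degree cases as in Theorem~\ref{thm:flag-2d+7}, and you carry Case~3 through to $M'''=N_M(v)\star\{v,p\}$ on $2d+5$ vertices exactly as in the remark preceding the corollary (you also correctly read $M^{d-1}_n$ in the statement as a $d$-dimensional $M^d_n$). Your reverse-order justification of step (iv) fills in a detail the paper leaves implicit, since each contraction there just returns the previous stage of $N(v)$ suspended by $\{v,p\}$; the only other point worth a line is termination of step (i), which holds because each subdivision vertex is adjacent to neither $u_1$ nor $u_2$, so the number of induced $4$-cycles through $u_1u_2$ strictly drops.
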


\section{Future Directions}

The present work contributes to the graph-theoretic study of manifold-like structures with singularities through the notion of discrete $d$-pseudomanifolds. In addition to providing a complete combinatorial classification of discrete $d$-pseudomanifolds with at most $2d+6$ vertices, it develops new graph-theoretic tools for their investigation. We also establish a one-to-one correspondence between discrete $d$-pseudomanifolds and the one-skeleta of flag normal pseudomanifolds. Since a flag simplicial complex is uniquely determined by its edge graph, the results obtained here have direct consequences for the theory of flag triangulations. 


We have proved that if $\Delta$ is a flag simplicial $d$-manifold with $2d+7$ vertices, then $\Delta$ is a simplicial $d$-sphere. Therefore, the natural question is to determine how many vertices are needed before a flag triangulated manifold can have a topology different from that of a sphere.

\begin{question}
For each $d\geq 3$, determine the smallest number $\mu_d$ for which there exists a flag triangulated $d$-manifold on $\mu_d$ vertices that is not a simplicial $d$-sphere. Furthermore, determine the possible topological spaces that can arise in this way.
\end{question}

Several results have been obtained on flag homology manifolds in relation to their $\gamma$-vectors. In particular, the first two $\gamma$-numbers are given as follows. Let $\Delta$ be a normal $d$-pseudomanifold. Then
$$
\gamma_1(\Delta)=f_0(\Delta)-2(d+1) \quad \text{and} \quad 
\gamma_2(\Delta)=f_1(\Delta)-(2d-1)f_0(\Delta)+(d-1)(2d+2),
$$
where $f_0(\Delta)$ and $f_1(\Delta)$ denote the number of vertices and edges of $\Delta$, respectively.

\begin{conj}\cite{Gal}\label{Gal}
Let $\Delta$ be a flag simplicial $d$-sphere. Then $\gamma_2(\Delta)\geq 0$.
\end{conj}

This conjecture is known to hold in dimension three (see \cite{Davis-Okun}) and also for several other classes of simplicial spheres (see \cite{Karu, NevoPetersen}). Since we have established that a flag normal $d$-pseudomanifold with at most $2d+7$ vertices is either a flag simplicial sphere obtained via suspensions, edge expansions, and joins, or a triangulation of the $(d-2)$-fold suspension of $\mathbb{RP}^2_{11}$, it follows that $\gamma_2(\Delta) \ge 0$ for every normal $d$-pseudomanifold $\Delta$ with at most $2d+7$ vertices. 

Using the same definitions, one can extend the notions of $\gamma_1$ and $\gamma_2$ to discrete $d$-pseudomanifolds. From Theorem \ref{vertex edge lowerbound}, we already know that $\gamma_1(M)\geq 0$ for any discrete $d$-pseudomanifold $M$. From the classifications obtained up to $2d+7$ vertices, we observe that $\gamma_2(M)\geq0$
for every discrete $d$-pseudomanifold $M$ with $f_0(M)\le 2d+7$. This leads us to the following conjecture.

\begin{conj}
Let $M$ be a discrete $d$-pseudomanifold. Then $\gamma_2(M)\geq 0$.
\end{conj}

\bigskip

 \noindent {\bf Acknowledgement:} The second author is supported by the institute fellowship at IIT Delhi. The third author is supported by the Ministry of Education of the Slovak Republic under grant VEGA 2/0056/25.

\smallskip

\noindent {\bf Data availability:} The authors declare that all data supporting the findings of this study are available within the article.

\smallskip

\noindent {\bf Declarations}

\smallskip

\noindent {\bf Conflict of interest:} No potential conflict of interest was reported by the authors.

\smallskip

 {
\end{document}